\let\csname equation*\endcsname\relax
\let\csname endequation*\endcsname\relax
\newtheorem{theorem}{Theorem}[section]
\newtheorem{lemma}{Lemma}
\newtheorem{remark}{Remark}
\newcommand{\bfone}{\mathbf{1}}
\newcommand{\bfzero}{\mathbf{0}}
\newcommand{\sigmam}{\sigma_{\mathrm{L}}}
\newcommand{\bfc}{\mathbf{c}}
\newcommand{\bfd}{\mathbf{d}}
\newcommand{\bfdo}{\mathbf{d}_{\mathrm{obs}}}
\newcommand{\bfk}{\mathbf{k}}
\newcommand{\bfm}{\mathbf{m}}
\newcommand{\bfmo}{\mathbf{m}_{\mathrm{0}}}
\newcommand{\bfn}{\mathbf{n}}
\newcommand{\bfr}{\mathbf{r}}
\newcommand{\bfs}{\mathbf{s}}
\newcommand{\bfu}{\mathbf{u}}
\newcommand{\bfy}{\mathbf{y}}
\newcommand{\bfz}{\mathbf{z}}
\newcommand{\bfzi}{\mathbf{z}_i}
\newcommand{\Cd}{C_{\mathbf d}}
\newcommand{\Cy}{C_{\mathbf y}}
\newcommand{\CL}{C_L}
\newcommand{\Cm}{C_{\mathbf m}}
\newcommand{\Wd}{W_{\mathbf d}}
\newcommand{\WL}{W_{L}}
\newcommand{\Wm}{W_{L}}
\newcommand{\bfzeta}{\mbox{\boldmath{$\zeta$}}}
\newcommand{\argmin}[1]{\textnormal{arg} \min_{#1}}
\newcommand{\figdirA}
\begin{document}
\title[Underdetermined parameter estimation by the $\chi^2$ principle] {Regularization Parameter Estimation for Underdetermined problems by the $\chi^2$ principle with application to  $2D$  focusing gravity inversion}
\author{Saeed Vatankhah$^1$,  Rosemary A Renaut$^2$ and Vahid E Ardestani,$^1$}
\date{\today}
\address{$^1$Institute of Geophysics, University of  Tehran,Tehran, Iran, $^2$ School of Mathematical and Statistical Sciences, Arizona State University, Tempe, USA }
\eads{\mailto{svatan@ut.ac.ir}, \mailto{renaut@asu.edu}, \mailto{ebrahim@ut.ac.ir}}    
\begin{abstract}
The $\chi^2$-principle generalizes the Morozov discrepancy principle to the augmented residual of the Tikhonov regularized least squares problem. 
For weighting of the data fidelity by a known Gaussian noise distribution on the measured data and, when  the stabilizing, or regularization, term is considered to be weighted by unknown inverse covariance information on the model parameters,   the minimum of the Tikhonov   functional  becomes a random variable that follows a $\chi^2$-distribution with $m+p-n$ degrees of freedom for the model matrix $G$ of size $m \times n$ and regularizer $L$ of size $p\times n$. Here it is proved that the result  holds for the underdetermined case, $m<n$ provided  that $m+p\ge n$  and that the null spaces of the operators do not intersect.    
A Newton root-finding algorithm  is used to find the   regularization parameter $\alpha$ which yields the optimal inverse covariance weighting in the case of a white noise assumption on the mapped model data. It is implemented for small-scale problems using the generalized singular value decomposition, or singular value decomposition when $L=I$.  Numerical results  verify the algorithm for the case of  regularizers approximating zero to second order derivative approximations,  contrasted    with the methods of generalized cross validation and unbiased predictive risk estimation. The inversion of underdetermined $2D$ focusing gravity data  produces models with non-smooth properties, for which typical implementations in this field use the iterative   minimum support stabilizer and both regularizer and regularizing parameter are updated   each iteration.   For  a simulated data set with noise, the regularization parameter estimation methods for underdetermined data sets are used in this iterative framework, also contrasted with the   L-curve and the Morozov Discrepancy principle.  These experiments   demonstrate the efficiency and robustness of the $\chi^2$-principle in this context, moreover showing that the L-curve and Morozov Discrepancy Principle are outperformed in general by  the three other techniques. Furthermore, the minimum support stabilizer is of general use for the $\chi^2$-principle when implemented without the desirable knowledge of a mean value of the model.

\end{abstract}
\ams{65F22, 65F10, 65R32}
\submitto{Inverse Problems}

\noindent\textbf{Keywords:} 
Regularization parameter, $\chi^2$ principle, gravity inversion, minimum support stabilizer
\maketitle
\section{Introduction}
We discuss the solution of numerically ill-posed and underdetermined systems of equations, $\bfd = G\bfm$. Here $G\in\mathcal{R}^{m \times n}$, with  $m<n$, is the matrix resulting from the discretization of a forward operator which maps from the parameter or model space to the data space, given respectively by the discretely sampled vectors $\bfm\in\mathcal{R}^n$, and $\bfd\in\mathcal{R}^m$. We assume that  the measurements of the  data $\bfd$ are error-contaminated, $\bfd_{\mathrm{obs}}=\bfd+\bfn$ for noise vector $\bfn$.  Such problems often arise from the discretization of a Fredholm integral equation of the first kind, with a kernel possessing an exponentially decaying spectrum that is responsible for the ill-posedness of the problem. Extensive literature on the solution of such problems  is available in standard literature, e.g. \cite{ABT,Engl, Hansen:98,tarantola,Vogel:02,Zhd}. 

A well-known approach for finding an acceptable solution to the ill-posed problem is to augment the data fidelity term,   $\|\Wd(G\bfm-\bfdo)\|_2^2$, here measured in a weighted  $L_2$ norm\footnote{Here we use the standard  definition for the weighted norm of the vector $\bfy$, $\|\bfy\|^2_W:=\bfy^TW\bfy$.},   by a stabilizing regularization term for the model parameters, $\|L(\bfm-\bfm_0)\|_2^2$, yielding the Tikhonov objective function 
\begin{align}\label{objective}
P^{\alpha}(\bfm):= \|\Wd(G \bfm-\bfdo)\|_2^2 + \alpha^2 \|L(\bfm-\bfm_0)\|_2^2.
\end{align}
Here $\alpha$ is the regularization parameter which trades-off between the two terms, $\Wd$ is a data weighting matrix, $\bfm_0$ is a given reference vector of \textit{a priori} information for the model $\bfm$, and the choice of $L\in\mathcal{R}^{p\times n}$ impacts the basis for the solution $\bfm$. The Tikhonov regularized solution, dependent on $\alpha$, is given by 
\begin{align}\label{Tiksoln}
\bfm_{\mathrm{Tik}}(\alpha)= \argmin{\bfm}\{P^{\alpha}(\bfm)\}.
\end{align} 
If  $\Cd=(\Wd^T\Wd)^{-1}$   is  the data covariance matrix,  and  we assume 
 white noise   for the mapped model parameters $L\bfm$ so that the model covariance matrix is $\CL=\sigmam^2I=\alpha^{-2}I=(\WL^T\WL)^{-1}$, then \eqref{objective}  is
 \begin{align}\label{objective2}
P^{\sigmam}(\bfm)= \|G \bfm-\bfdo\|_{\Cd^{-1}}^2 +  \|L(\bfm-\bfm_0)\|_{\CL^{-1}}^2.
\end{align}
Note  we will use in general the notation $\bfy \sim \mathbb{N}(\hat{\bfy}, \Cy)$ to indicate that $\bfy$ is normally distributed with mean $\hat{\bfy}$ and symmetric positive definite (SPD)   covariance matrix $\Cy$.  Using $\CL=\alpha^{-2}I$  in  \eqref{objective2} permits an assumption of white noise in the estimation for $L\bfm$ and thus statistical interpretation of the regularization parameter $\alpha^2$ as  the inverse of the white noise  variance. 

The determination of an optimal $\alpha$   is a topic of much previous research and includes methods such as the L-curve (LC) \cite{Hansen:92}, generalized cross validation (GCV) \cite{GoHeWa}, the unbiased predictive risk estimator (UPRE) \cite{Marq,ThKaTi}, the residual periodogram (RP) \cite{hakikj:06,rust}, and the Morozov discrepancy principle  (MDP) \cite{morozov}, all of which are well described in the literature, see e.g.  \cite{Hansen:98,Vogel:02}  for comparisons of  the criteria and further references. The motivation and assumptions for these methods varies; while the  UPRE requires that  statistical information on the noise in the measurement data be provided,  for the MDP it is sufficient to have an estimate of the overall error level in the data. More recently, a new approach based on the $\chi^2$ property of the functional \eqref{objective2} under  statistical assumptions applied through $\CL$,  was proposed by \cite{mead:08}, for the overdetermined case  $m>n$ with $p=n$.  The extension to the case with $p\le n$ and a discussion of effective numerical algorithms is given in \cite{mere:09, RHM10}, with also consideration of the case when $\hat{\bfm}=\bfmo$ is not available. Extensions for nonlinear problems \cite{MH13}, inclusion of inequality  constraints \cite{MR10} and for multi parameter assumptions \cite{mead:13} have also been considered.

The fundamental premise of the $\chi^2$ principle for estimating $\sigmam$ is that provided the noise distribution on the measured data is available, through knowledge of $\Cd$, such that the weighting on the model and measured parameters is as in \eqref{objective2}, and that the mean value $\hat{\bfm}$ is known, then $P^{\sigmam}(\bfm_{\mathrm{Tik}}(\sigmam))$ is a random variable following a $\chi^2$ distribution with $m+p-n$ degrees of freedom,   $P^{\sigmam}(\bfm_{\mathrm{Tik}}(\sigmam))\sim \chi^2(m+p-n, c)$, with centrality parameter $c=0$. Thus the expected value satisfies $\hat{P}^{\sigmam}(\bfm_{\mathrm{Tik}}(\sigmam)) =m+p-n$.  As a result   $P^{\sigmam}(\bfm_{\mathrm{Tik}}(\sigmam))$  lies within an interval centered around its expected value, which facilitates the development of the Newton root-finding algorithm for the optimal $\sigmam$ given in \cite{mere:09}. This algorithm has the advantage as compared to other techniques of being very fast for finding the unique $\sigmam$, provided the root exists, requiring generally no more than $10$ evaluations of $P^{\sigmam}(\bfm_{\mathrm{Tik}}(\sigmam))$ to converge to a reasonable estimate.  The algorithm in \cite{mere:09} was presented for small scale problems in which one can use the singular value decomposition (SVD) \cite{GoLo:96}  for matrix $G$ when $L=I$ or the generalized singular value decomposition (GSVD) \cite{PaigeSau1} of the matrix pair $[\Wd G; L]$.  For the large scale case an approach using the Golub-Kahan iterative bidiagonalization based on the LSQR algorithm \cite{PaigeSau2, PaigeSau3} was presented in \cite{RHM10} along with the extension of the algorithm for the non-central destribution of $P^{\sigmam}(\bfm_{\mathrm{Tik}}(\sigmam))$, namely when $\bfmo$ is unknown but may be estimated from a set of measurements.  In this paper the   $\chi^2$ principle  is first extended to the estimation of $\sigmam$ for underdetermined problems,  specifically for the central $\chi^2$ distribution   with known $\bfmo$, with the proof of the result in Section~\ref{theory} and examples in Section~\ref{otherresults}.  
                                                         
In many cases the smoothing that arises when  the basis mapping operator $L$     approximates low order derivatives  is unsuitable for handling material properties that vary over relatively short distances, such as in the inversion of gravity data produced by localized sources. A stabilizer that does not penalize sharp boundaries is instead preferable. This can be achieved by setting the regularization term in a different  norm, as for example using  the total variation, \cite{ROF},   already significantly studied and applied for geophysical inversion e.g. \cite{Ring:99,SGR}. Similarly,the minimum support (MS) and minimum gradient support (MGS) stabilizers provide solutions with non-smooth properties and were introduced for geophysical inversion in \cite{PoZh:99} and \cite{Zhd}, respectively.  We note now the relationship of the iterated MS stabilizer with non stationary iterated Tikhonov regularization, \cite{HG98} in which the solution is iterated to convergence with a fixed operator $L$, but updated residual and iteration dependent $\alpha^{(k)}$ which is forced to zero geometrically. The technique was  extended for example for image deblurring  in \cite{DH13} with $\alpha^{(k)}$ found using a version of the MDP  and dependent on a good preconditioning approximation for the square model matrix. More generally,  the iterative  MS stabilizers, with both $L$ and $\alpha$  iteration dependent, are closely related to the  iteratively reweighted norm (IRN) approximation for the Total Variation norm introduced and analyzed in \cite{WoRo:07}.
 In this paper the MS stabilizer is used to reconstruct non-smooth models  for  the  geophysical problem of  gravity data inversion with the updating regularization parameter found using the most often applied techniques of MDP and L-curve, contrasted with the UPRE, GCV and the $\chi^2$ principle. These results also show that the $\chi^2$ principle can be  applied without  knowledge of $\bfmo$ through the MS iterative process.  Initialization with $\bfmo =0$ is contrasted with an initial stabilizing choice determined by the spectrum of the operator.

The outline of this paper is  as follows. In Section~\ref{theory} the theoretical development of the $\chi^2$ principle for the underdetermined problem is presented. We note that the proof is stronger than that used in the original literature when $m>n$ and hence improves the general result. The algorithm uses the GSVD (SVD) at each iteration and leads to a Newton-based algorithm for estimating the regularization parameter. A review of other standard techniques for parameter estimation is presented in Section~\ref{other} and numerical examples contrasting these with the $\chi^2$ approach also given, Section~\ref{otherresults}. The MS stabilizer is described in Section~\ref{MSstab} and numerical experiments contrasting the impact of the choice of the regularization parameter within the MS algorithm for the problem of $2D$ gravity inversion in Section~\ref{results}. Conclusions and future work are discussed in Section~\ref{sec:conc}.
\section{Theoretical Development}\label{theory}
Although the proof of the result on the degrees of freedom for the underdetermined case $m<n$  effectively follows the ideas introduced \cite{mere:09,RHM10}, the modification presented here provides a stronger result which can also strengthen the result for the overdetermined case, $m\ge n$.
\subsection{$\chi^2$ distribution for the underdetermined case}
We first assume that it is possible to solve the  normal equations 
\begin{align}\label{normal}
(G^T\Wd^T\Wd G  + L^T\WL^T\WL L) \bfy = G^T\Wd^T\Wd \bfr, \,\, \bfr :=\bfdo-G\bfmo, \,\, \bfy=\bfm-\bfmo
\end{align}
for the shifted system associated with \eqref{objective2}. The \textbf{invertibility} condition for \eqref{normal} requires that  $\tilde{L}:=\WL L$ and $\tilde{G}:=\Wd G$  have null spaces which do not intersect 
\begin{align}\label{invert}
\fl \mathcal{N}(\WL L) \cap \mathcal{N}(\Wd G) =0.
\end{align}
Moreover, we also assume $m+p\ge n$ which is realistic when  $L$ approximates a  derivative operator of order $l$, then $p=n-l$, and typically $l$ is small, $n-l\ge 0$.

 Following \cite{mere:09} we first find the functional $P^{\WL}(\bfm_{\mathrm{Tik}}(\WL))$ where $\bfm_{\mathrm{Tik}}(\WL) = \bfy(\WL)+\bfmo$ and $\bfy(\WL)$ solves \eqref{normal},  for  general   $\WL$.  There are many  definitions for the GSVD in the literature,  differing  with respect to the ordering of the singular decomposition terms, but all effectively equivalent to the original GSVD introduced in \cite{PaigeSau1}.   For ease of presentation we introduce $\mathbf{1}_k$ and  $\mathbf{0}_k$ to be the vectors of length $k$ with $1$, respectively $0$, in all rows, and define $q=n-m\ge0$.
We use the GSVD as stated in \cite{ABT}. 
\begin{lemma}[GSVD]\label{gsvdlemma}
Suppose $H:=[\tilde{G}; \tilde{L}]$, where   $\tilde{G}$  has size $m \times n$, $m<n$, $\tilde{L}$ has  size $p \times n$, $p\le n$ with both $\tilde{G}$ and $\tilde{L}$ of full row rank $m$, and $p$, respectively, and by \eqref{invert} that $H$ has full column rank $n$.
The generalized singular value decomposition  for $H$ is
\begin{align}\label{gsvd}
[\tilde{G}; \tilde{L}] &= [U\tilde{\Upsilon} X^T; V\tilde{M}X^T] \\
\tilde{\Upsilon} = \left[0_{m\times q}   \ \ \Upsilon  \right],  \,\, \Upsilon &=\mathrm{diag}(\nu_{q+1}, \dots, \nu_{p},\bfone_{n-p})\in \mathcal{R}^{m\times m}, \, \nu_i=1, i=p+1:n,\\
\tilde{M}=\left[M \ \ 0_{p \times (n-p)}  \right], \,\, M&=\mathrm{diag}(\bfone_q,\mu_{q+1}, \dots, \mu_{p})\in \mathcal{R}^{p\times p}, \, \mu_i=1, i=1:q, \\
0 < \nu_{q+1} \le \dots \le \nu_{p} < 1, & \quad  1 >  \mu_{q+1} \ge \dots \ge \mu_{p} > 0, \quad \nu_i^2+\mu_i^2=1.
\end{align} \
Matrices $U\in\mathcal{R}^{m \times m}$ and  $V\in\mathcal{R}^{p \times p}$ are orthogonal, $U^TU=I_m$,  $V^TV=I_p$, and $X \in \mathcal{R}^{n \times n}$ is invertible; $X^{-1}$ exists.
\end{lemma}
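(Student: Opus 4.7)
The plan is to establish the decomposition by combining a thin QR factorization with the CS decomposition, following the classical construction of Van Loan and Paige--Saunders.

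First I would stack the two matrices as $H := [\tilde{G}; \tilde{L}] \in \mathcal{R}^{(m+p)\times n}$. Because the null-space hypothesis \eqref{invert} is precisely $\mathcal{N}(H) = \{0\}$, the matrix $H$ has full column rank $n$, which in turn forces $m+p \ge n$ as already assumed. Consequently $H$ admits a thin QR factorization $H = Q R$ with $Q \in \mathcal{R}^{(m+p)\times n}$ having orthonormal columns and $R \in \mathcal{R}^{n\times n}$ invertible. Partitioning $Q$ conformally into a top block $Q_1 \in \mathcal{R}^{m\times n}$ and a bottom block $Q_2 \in \mathcal{R}^{p\times n}$ yields $\tilde{G} = Q_1 R$ and $\tilde{L} = Q_2 R$, while the identity $Q^T Q = I_n$ becomes $Q_1^T Q_1 + Q_2^T Q_2 = I_n$, which is exactly the hypothesis required to invoke the CS decomposition.

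Next I would apply the CS decomposition to the pair $(Q_1, Q_2)$ to produce orthogonal matrices $U \in \mathcal{R}^{m\times m}$, $V \in \mathcal{R}^{p\times p}$, and $W \in \mathcal{R}^{n\times n}$, together with rectangular diagonal matrices $\tilde{\Upsilon}$ and $\tilde{M}$ of the block form stated in the lemma, satisfying $Q_1 = U \tilde{\Upsilon} W^T$ and $Q_2 = V \tilde{M} W^T$. The Pythagorean identity $\nu_i^2 + \mu_i^2 = 1$ and the monotonicity of the $\nu_i$ and $\mu_i$ follow directly from the CS decomposition. Setting $X := R^T W$ so that $X^T = W^T R$, and noting $X$ is invertible since $R$ is invertible and $W$ is orthogonal, I obtain immediately $\tilde{G} = U \tilde{\Upsilon} X^T$ and $\tilde{L} = V \tilde{M} X^T$.

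The main obstacle will be verifying that the CS decomposition delivers exactly the block pattern claimed: that $q = n - m$ leading columns of $\tilde{\Upsilon}$ are zero (with the paired $\mu_i = 1$), that the trailing $n - p$ columns of $\tilde{M}$ are zero (with the paired $\nu_i = 1$), and that the $p - q$ intermediate columns carry strictly mixed sine--cosine pairs satisfying $0 < \nu_i < 1$ and $0 < \mu_i < 1$. Pinning these multiplicities down requires using all three rank hypotheses simultaneously, namely that $\tilde{G}$ has row rank $m$, that $\tilde{L}$ has row rank $p$, and that $\mathcal{N}(\tilde{G}) \cap \mathcal{N}(\tilde{L}) = 0$; together they fix $\dim \mathcal{N}(Q_1) = q$ and $\dim \mathcal{N}(Q_2) = n - p$, which the CS decomposition surfaces as the leading zero-block of $\tilde{\Upsilon}$ and the trailing zero-block of $\tilde{M}$, respectively, completing the construction.
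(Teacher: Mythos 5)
The paper offers no proof of this lemma at all: it simply quotes the GSVD in the form given in Aster--Borchers--Thurber, which traces back to Paige and Saunders, so there is nothing internal to compare your argument against. What you have written is the standard Van Loan/Paige--Saunders construction of that cited result, and it is essentially correct: the null-space hypothesis \eqref{invert} gives $H$ full column rank, the thin QR factorization $H=QR$ reduces the problem to a pair $(Q_1,Q_2)$ with $Q_1^TQ_1+Q_2^TQ_2=I_n$, the CS decomposition supplies $U$, $V$, $W$ and the cosine--sine pairs, and $X:=R^TW$ is invertible and yields \eqref{gsvd}. Your accounting of the multiplicities is also right and is the only place where all three rank hypotheses are genuinely needed: since $R$ is invertible, $\mathrm{rank}(Q_1)=\mathrm{rank}(\tilde G)=m$ forces $\nu_i>0$ for $i>q$ with exactly $q=n-m$ zero cosines (paired with $\mu_i=1$), while $\mathrm{rank}(Q_2)=\mathrm{rank}(\tilde L)=p$ forces $\mu_i>0$ for $i\le p$ with exactly $n-p$ zero sines (paired with $\nu_i=1$), and $\nu_i^2+\mu_i^2=1$ then gives the strict inequalities $0<\nu_i<1$ and $0<\mu_i<1$ on the $m+p-n$ intermediate indices. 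The one cosmetic point worth making explicit is that the CS decomposition is only determined up to simultaneous permutation of the columns of $\tilde\Upsilon$, $\tilde M$ and $W$, so the particular ordering in the lemma (zero cosines first, $\nu_i$ nondecreasing, unit cosines last) is obtained by applying such a permutation; as the paper's own remark on ordering notes, this choice is a convention rather than a consequence.
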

\begin{remark}\label{gsvrange}The indexing  in matrices $M$ and $\Upsilon$ uses the column index and we use the 
  definitions $\mu_i=0$, $i=p+1:n$ and $\upsilon_i=0$, $i=1:q$.  The generalized singular values are given by $\gamma_i=\upsilon_i/\mu_i$, $i=1:n$. Of these $n-p$ are infinite, $m+p-n$ are finite and non-zero, and $q$ are zero. 
\end{remark}

We first introduce $\tilde{\bfr}:=\Wd\bfr$ and note the relations 
\begin{align*}
\tilde{\Upsilon}^T\tilde{\Upsilon}&+\tilde{M}^T\tilde{M}  =I_n, \qquad
\tilde{G}^T\tilde{G}+\tilde{L}^T\tilde{L} = XX^T,  \qquad   \tilde{\Upsilon}\tilde{\Upsilon}^T =\Upsilon\Upsilon^T,\qquad \mathrm{and}\\
\bfy&=(X^T)^{-1} \tilde{\Upsilon}^TU^T\tilde{\bfr},\qquad \tilde{G}\bfy=U\tilde{\Upsilon}\tilde{\Upsilon}^TU^T\tilde{\bfr},\qquad \tilde{L}\bfy=V\tilde{M}\tilde{\Upsilon}^TU^T\tilde{\bfr}.
\end{align*}
Thus with $\bfs=U^T\tilde{\bfr}$, with indexing from $q+1:n$ for $\bfs$ of length $m$, $s_i=\bfu_{i-q}^T\tilde{\bfr}$,
\begin{align}\nonumber
P^{\WL}(\bfm_{\mathrm{Tik}}(\WL))&=\tilde{\bfr}^TU(I_m- \tilde{\Upsilon}\tilde{\Upsilon}^T)U^T\tilde{\bfr} = \sum_{i=q+1}^{p} \mu_i^2 s_i^2 
=\|\bfk\|_2^2,\\\bfk&=QU^T\Wd\bfr, \quad Q:=\mathrm{diag}(\mu_{q+1},\dots,\mu_{p},\bfzero_{n-p}).\label{veck}
\end{align}
To obtain our desired result on $P^{\WL}(\bfm_{\mathrm{Tik}}(\WL))$ as a random variable we investigate the statistical distribution of the components for $\bfk$, following \cite[Theorem 3.1]{mere:09} and \cite[Theorem 1]{RHM10} for  the cases of a central, and non-central distribution, respectively, but with  modified assumptions that lead to a stronger result.  
\begin{theorem}[central and non-central  $\chi^2$ distribution of : $P^{\WL}(\bfm_{\mathrm{Tik}}(\WL)$]\label{newtheorem}
Suppose  $\bfn \sim \mathbb{N}(0, \Cd)$, $\bfzeta:=(\bfm-\bfmo) \sim \mathbb{N}(\hat{\bfm}, \Cm  )$,   $L\bfzeta:=L(\bfm-\bfmo) \sim \mathbb{N}(\hat{\bfm}, \CL )$, 
 the invertibility condition  \eqref{invert}, and that $m+p-n>0$ is sufficiently large that limiting distributions for the $\chi^2$ result  hold. Then for 
\begin{enumerate}
\item  $\bfmo=\hat{\bfm}$:  $P^{\WL}(\bfm_{\mathrm{Tik}}(\WL))\sim \chi^2(I_{m+p-n}, 0)$.
\item  $\bfmo\ne\hat{\bfm}$:  $P^{\WL}(\bfm_{\mathrm{Tik}}(\WL))\sim \chi^2({m+p-n}, c)$, $c=\|QU^T\Wd G(\hat{\bfm}-\bfmo)\|_2^2:=\|\bfc\|_2^2$. 
\end{enumerate} 
Equivalently the minimum value of the functional $P^{\WL}(\bfm_{\mathrm{Tik}}(\WL))$  is a random variable which follows a $\chi^2$ distribution with $m+p-n$ degrees of freedom and centrality parameter $c=\|QU^T\Wd G(\hat{\bfm}-\bfmo)\|_2^2$.  
\end{theorem}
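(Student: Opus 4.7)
The plan is to reduce the quadratic form $P^{\WL}(\bfm_{\mathrm{Tik}}(\WL)) = \|\bfk\|_2^2$ already established in~\eqref{veck} to a sum of squares of independent normal variables, and then to invoke the definition of the (non-central) $\chi^2$ distribution. The work lies in decomposing $\bfk$ in the GSVD basis and verifying, via the identity $\mu_i^2 + \nu_i^2 = 1$, that after all weights are accounted for each of the $m+p-n$ relevant coordinates has unit variance.

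First I would use the noise model $\bfdo = G\bfm + \bfn$ to write $\bfr = G\bfzeta + \bfn$, so that
\begin{align*}
\bfk \;=\; QU^T\Wd G\bfzeta \;+\; QU^T\Wd\bfn.
\end{align*}
Since $\Wd\bfn \sim \N(\bfzero, I_m)$ from the identity $\Cd = (\Wd^T\Wd)^{-1}$ and $U^TU=I_m$, we obtain $U^T\Wd\bfn \sim \N(\bfzero, I_m)$; scaling by $Q$ multiplies coordinate $i$ by $\mu_i$, giving variance $\mu_i^2$. Substituting $\Wd G = U\tilde{\Upsilon} X^T$ from Lemma~\ref{gsvdlemma} yields $QU^T\Wd G = Q\tilde{\Upsilon} X^T$; by Remark~\ref{gsvrange} the matrix $Q\tilde{\Upsilon}$ vanishes outside columns $q+1,\ldots,p$, with entry $\mu_i\nu_i$ on the corresponding diagonal. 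Hence the model contribution to $\bfk$ is supported only on indices $i = q+1,\ldots,p$, with value $\mu_i\nu_i (X^T\bfzeta)_i$.

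The critical step is to translate the prior hypothesis into a statement about the components $(X^T\bfzeta)_i$. From $\WL L = V\tilde{M}X^T$, the orthogonality of $V$, and $\tilde{M} = [M\ \ 0_{p \times (n-p)}]$, one obtains $V^T\WL L\bfzeta = M(X^T\bfzeta)_{1:p}$; since the assumption $\CL=(\WL^T\WL)^{-1}$ implies $\WL L\bfzeta$ has identity covariance, for $i = q+1,\ldots,p$ the coordinate $(X^T\bfzeta)_i$ is normal with variance $\mu_i^{-2}$ and mean $(X^T(\hat{\bfm}-\bfmo))_i$, which vanishes in case (i) and is nonzero in case (ii). Under the independence of $\bfzeta$ and $\bfn$, the middle block of $\bfk$ therefore consists of independent Gaussian coordinates
\begin{align*}
k_i \;=\; \mu_i\nu_i (X^T\bfzeta)_i \;+\; \mu_i (U^T\Wd\bfn)_{i-q}, \qquad i = q+1,\ldots,p,
\end{align*}
each with total variance $\mu_i^2\nu_i^2 \cdot \mu_i^{-2} + \mu_i^2 = \nu_i^2 + \mu_i^2 = 1$. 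Summing $k_i^2$ over the $p-q = m+p-n$ middle indices then gives the claimed $\chi^2(m+p-n,c)$ law, with $c=0$ in case (i) and with the sum of squared means rewritten as $c = \|QU^T\Wd G(\hat{\bfm}-\bfmo)\|_2^2$ in case (ii).

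The main obstacle is the variance bookkeeping that reconciles the two independent sources of randomness---measurement noise and prior variability in $\bfm$---within the GSVD coordinates so that the scalings telescope to unity via $\nu_i^2+\mu_i^2=1$. A related subtlety is the role of the invertibility hypothesis~\eqref{invert}: it guarantees that the directions killed by $Q$ (those with $\mu_i=0$, i.e., $i>p$, which lie in $\mathcal{N}(L)$) coincide with those already recoverable from $\Wd G$, so that exactly $m+p-n$ coordinates contribute and no ``infinite variance'' pathology from the $\mu_i=0$ directions contaminates the sum. If this accounting is kept strictly, the stronger statement claimed by the authors (covering both central and non-central cases and improving the original argument) follows directly.
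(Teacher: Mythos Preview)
Your proof is correct and follows essentially the same route as the paper's: both establish that $\bfk$ is Gaussian with mean $QU^T\Wd G(\hat{\bfm}-\bfmo)$ and covariance $\mathrm{diag}(I_{m+p-n},0)$, with the identity $\nu_i^2+\mu_i^2=1$ collapsing the coordinate variances to unity. The difference is one of presentation. The paper computes $C_{\bfk}$ as a matrix: it writes $C_{\bfk}=QQ^T+Q\tilde{\Upsilon}(X^T\Cm X)\tilde{\Upsilon}^TQ^T$, then uses $\tilde{M}(X^T\Cm X)\tilde{M}^T=I_p$ together with the pseudoinverse $\tilde{M}^\dagger$ and a block decomposition of $X^T\Cm X$ to isolate the relevant $M^{-2}$ block. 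You instead split $\bfk$ into its noise and model parts and work coordinate-wise, inverting the square invertible diagonal matrix $M$ directly to read off $\mathrm{Cov}((X^T\bfzeta)_{1:p})=M^{-2}$ without pseudoinverses or block partitions. Your route is a bit more elementary; the paper's matrix calculation delivers the full covariance (and hence independence across coordinates) in one stroke, whereas you assemble independence of the $k_i$ from the diagonality of $M^{-2}$ and $I_m$ together with the assumed independence of $\bfn$ and $\bfzeta$. Both arguments are valid and rest on the same underlying identities.
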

\begin{proof}
By \eqref{veck} it is sufficient to examine the components $k_i$, $i=q+1, \dots, p$ to demonstrate $\|\bfk\|^2$ is a sum of normally distributed components with mean $\bfc$ and then employ the limiting argument to yield the $\chi^2$ distribution, as in \cite{mere:09,RHM10}.  First observe that $\bfd \sim \mathbb{N}(G\hat{\bfm}, \Cd+G\Cm G^T)$, thus  $\bfr=\bfdo-G\bfmo=\bfd+\bfn-G\bfmo=G(\bfm-\bfmo)+\bfn \sim \mathbb{N}(G(\hat{\bfm}-\bfmo), \Cd+G\Cm G^T)$,  $\Wd \bfr \sim \mathbb{N}(\Wd G(\hat{\bfm}-\bfmo), \Wd (\Cd +G\Cm G^T)\Wd^T)$, and $\bfk \sim \mathbb{N}(QU^T\Wd G(\hat{\bfm}-\bfmo), QU^T\Wd (\Cd +G\Cm G^T)\Wd^TUQ^T)$. The result for the central parameter $c$ is thus immediate. For the covariance we have
\begin{align}\label{vark}
C_{\mathbf{k}}&=QU^T\Wd (\Cd +G\Cm G^T)\Wd^TUQ^T = QQ^T + Q\tilde{\Upsilon}X^T \Cm X \tilde{\Upsilon}^TQ^T.
\end{align}
By assumption,  $L$ has full row rank and $\Cm$ is SPD, thus for $\CL:=L\Cm L^T$ we can define $\WL:=\sqrt{\CL^{-1}}$. Therefore
\begin{align}\nonumber
I_p=\WL \CL \WL^T 
= \WL L\Cm L^T \WL^T&= \tilde{L} \Cm \tilde{L}^T = V\tilde{M}X^T \Cm X \tilde{M}^T V^T \quad \mathrm{implies}\\
\tilde{M} (X^T \Cm X) \tilde{M}^T&=I_p.\label{import}
\end{align}
Introduce pseudoinverses for $\tilde{M}$ and $\tilde{M}^T$,  denoted by superscript $\dagger$,  and a dimensionally-consistent block decomposition for $(X^T\Cm X)$, in which $C_{11}$ is of size $p\times p$, 
\begin{align}\label{decompC}
X^T\Cm X &= \left(\begin{array}{cc} C_{11}&C_{12} \\C_{21}&C_{22}  \end{array}\right), \quad 
\tilde{M}^\dagger = I_n \left(\begin{array}{c} M^{-1} \\ 0 \end{array} \right) I_p, \quad 
(\tilde{M}^T)^{\dagger}  = I_p \left( \begin{array}{cc} M^{-1} & 0 \end{array} \right )I_n.
\end{align}
Then applying to \eqref{import}
\begin{align*}
\tilde{M}^\dagger (\tilde{M}^T)^{\dagger} &= \left(\begin{array}{cc} M^{-2} & 0 \\0 & 0 \end{array}\right) 
= \tilde{M}^\dagger \tilde{M} (X^T \Cm X) \tilde{M}^T  (\tilde{M}^T)^{\dagger} \quad \mathrm{yielding} \\
\left(\begin{array}{cc} M^{-2} & 0 \\0 & 0 \end{array}\right) & = \left(\begin{array}{cc} I_{p} & 0\\0 & 0 \end{array}\right) (X^T \Cm X)  \left(\begin{array}{cc} I_{p} & 0\\0 & 0 \end{array}\right) 
= \left(\begin{array}{cc} C_{11} & 0\\0 & 0 \end{array}\right).
\end{align*}
Moreover,
\begin{align}\label{QU}
Q\tilde{\Upsilon} &= \left(\begin{array}{cc} M_{11}&0\\0& 0_{n-p} \end{array}\right) \left(\begin{array}{ccc} 0&\Upsilon_{11}&0\\0 &0&I_{n-p} \end{array}\right) 
= \left(\begin{array}{ccc} 0&\Upsilon_{11}M_{11}&0\\0 &0&0_{n-p}\end{array}\right)
\end{align}
where  $M_{11}:=\mathrm{diag}(\mu_{q+1}, \dots, \mu_{p})$, and $\Upsilon_{11} :=\mathrm{diag}(\nu_{q+1}, \dots, \nu_p)$. Then with a   block decomposition of $(X^T \Cm X)$, in which as compared to \eqref{decompC} now $[C_{13}, C_{12}]:=C_{12}$ and $[C_{31};C_{21}]=C_{21}$, we have for \eqref{vark}
\begin{align*}
C_{\bfk}&=QQ^T  +Q\tilde{\Upsilon}X^T \Cm X \tilde{\Upsilon}^T Q^T  \\
&=\left(\begin{array}{cc} M^2_{11}  & 0 \\ 0 & 0_{n-p}\end{array}\right) + 
\left(\begin{array}{ccc} 0&\Upsilon_{11}M_{11}&0\\0 &0&0_{n-p}\end{array}\right)
\left(\begin{array}{ccc} I_{n-m}& 0 & C_{13}\\ 0  & M^{-2}_{11}&C_{12}\\C_{31} &C_{21}&C_{22}\end{array} \right)  
\left(\begin{array}{cc} 0 & 0 \\  \Upsilon_{11}M_{11} & 0\\0 & 0_{n-p}
\end{array}\right) \\
&=\left(\begin{array}{cc} M^2_{11}  & 0 \\ 0 & 0_{n-p}\end{array}\right)  + 
\left( \begin{array}{cc} \Upsilon^2_{11} & 0 \\ 0 & 0_{n-p} \end{array}\right) = \left( \begin{array}{cc} I_{m+p-n} & 0 \\ 0 & 0_{n-p}\end{array}\right),
\end{align*} 
as required to obtain the properties of the distribution for $\|\bfk\|^2$. 
\end{proof}
\begin{remark}
Note that the result is exactly the same as given in the previous results for the overdetermined situation $m\ge n$ but now for $m<n$ with $m+p\ge n$ and without  the prior assumption on the properties for the pseudoinverse on $\CL$. Namely we   directly use the pseudoinverse  $\tilde{M}^{\dagger}$ and its transpose hence, after adapting the proof for the case with $m\ge n$, this  tightens  the results previously presented in \cite{mere:09, RHM10}.
\end{remark}
\begin{remark}
We note as in \cite[Theorem 2]{RHM10} that the theory can be extended for the case in which the filtering of the GSVD replaces uses $f_i=0$ for $\upsilon_i<\tau$ for some tolerance $\tau$, eg suppose $\upsilon_i<\tau$ for $i \le p-r$ then we have the filtered  functional 
\begin{align}
\|\bfk(\sigmam)\|_2^2&=\sum_{i=q+1}^{p-r} s_i^2 + \sum_{i=p-r+1}^{p} \frac{s_i^2}{\gamma_i^2\sigmam^2+1} := \sum_{i=q+1}^{p-r} s_i^2+\|\bfk_{\mathrm{FILT}}\|_2^2.\label{kfilter}
\end{align}
Thus we obtain $\|\bfk(\sigmam)_{\mathrm{FILT}}\|_2^2 \sim \chi^2(r,c_{\mathrm{FILT}})$, where we use $c_{\mathrm{FILT}}=\|\bfc_{\mathrm{FILT}}\|^2_2=\|\tilde{I}\bfc\|^2_2$, in which $\tilde{I}=\mathrm{diag}(\bfzero_{m-r+p-n}, \bfone_{r},\bfzero_{n-p})$ picks out the filtered components only. 
\end{remark}
\begin{remark}As already noted in the statement of Lemma~\ref{gsvdlemma} the GSVD is not uniquely defined with respect to ordering of the columns of the matrices. On the other hand it is not essential that the ordering be given as stated to use the iteration defined by \eqref{newt}. In particular, it is sufficient to identify the ordering of the spectra in matrices $\Upsilon$ and $M$, and then to assure that elements of $\bfs$ are calculated in the same order, as determined by the consistent ordering of $U$. This also applies to the statement for \eqref{kfilter} and for the use of the approach with the SVD. In particular the specific form for \eqref{kfilter} assumes that the $\gamma_i$ are ordered from small to large, in opposition to the standard ordering for the SVD. 
\end{remark}
\subsection{Algorithmic Determination of $\sigmam$}
As in \cite{RHM10} Theorem~\ref{newtheorem}  suggests finding $\Wm$ such that $\|\bfk(\Wm)\|^2$ as closely as possible follows the  $\chi^2(m+p-n,c(\Wm))$ distribution. Let $\psi(\Wm)=z_{\theta/2} \sqrt{2(m+p-n+2c(\Wm))}$ where $z_{\theta/2}$ is the relevant $z$-value for the $\chi^2$ distribution with $m+p-n$ degrees of freedom. $\theta$ defines the $(1-\theta)$ confidence interval  
\begin{align}\label{rangeP}
(m+p-n+c(\Wm)) -\psi(\Wm) \le\|\bfk(\Wm)\|_2^2\le (m+p-n+c(\Wm)) +\psi(\Wm).
\end{align}
A root finding algorithm for $c=0$ and  $\Wm=\sigmam^{-2}I$  was presented in \cite{mere:09}, and extended for $c>0$ in \cite{RHM10}. The general and difficult multi-parameter case was discussed in \cite{mead:13}, with extensions for nonlinear problems in \cite{MH13}. We collect all the parameter estimation formulae   in \ref{App}.

\subsection{Related Parameter Estimation Techniques}\label{other}
In order to assess the impact of Theorem~\ref{newtheorem} in contrast to other accepted techniques for  regularization parameter estimation we very briefly review key aspects of the related algorithms which are then contrasted in Section~\ref{otherresults}.  Details can be found in the literature, but for completeness the   necessary formulae   when implemented for the GSVD (SVD) are given in  \ref{App}, here using as consistent with \eqref{objective} $\alpha:=\sigmam^{-1}$. 

 The {\bf Morozov Discrepancy Principle (MDP)}, \cite{morozov},    is a widely used technique for gravity and magnetic field data inversion. $\alpha$ is chosen under the assumption that the norm of the weighted residual, $\|\tilde{G}\bfy(\alpha)-\tilde{\bfr}\|_2^2\sim\chi^2(\delta,0)$, where $\delta$ denotes the  number of  degrees of freedom.  For a problem of full column rank $\delta=m-n$, \cite[p. 67, Chapter 3]{ABT}. But, as also noted in
 in \cite{MH13}, this is  only valid  when $m>n$ and, as   frequently adopted in practice,  a scaled version $\delta=\rho m$, $0<\rho\le 1$, can be  used. 
The choice of $\alpha$ by   {\bf Generalized Cross Validation (GCV)} is under the premise   that  if an arbitrary measurement is removed from the data set,  then the corresponding regularized solution should be able to predict the missing observation. The GCV formulation yields a minimization which can fail when the associated objective is nearly flat, creating difficulties to compute the minimum numerically, \cite{Hansen:92}. The {\bf L-curve}, which finds $\alpha$ through the 
trade-off between the norms of the regularization   $L(\bfm -\bfm_0)$  and the weighted  residuals,  \cite{Hansen:92,Regtools}, may not be robust for problems that do not generate well-defined corners, making it difficult to find the point of  maximum curvature of the plot as a function of $\alpha$.  Indeed, when $m<n$ the curve is generally smoother and it is harder to find $\alpha_{\mathrm{opt}}$, \cite{LiOd:99,VaArRe}.  As for  GCV, the 
{\bf Unbiased Predictive Risk Estimator (UPRE)} minimizes a functional, chosen to 
 to minimize the expected value of the predictive risk \cite{Vogel:02}, and requires that information on the noise distribution in the data is provided. Apparently, there is no one approach that is likely successful in all situations. Still, 
the GSVD (SVD) can be used in each case to simplify the objectives and functionals e.g. \cite{ABT,Regtools, mere:09, Vogel:02}, hence making their repeat evaluation  relatively cheap for small scale problems, and thus of relevance for comparison in the underdetermined situation with the proposed $\chi^2$ method \eqref{newt}.

\subsection{Numerical Evaluation for Underdetermined Problems}\label{otherresults}
We first assess the efficacy of using the noted regularization parameter estimation techniques for the solution of underdetermined problems, by presentation of some illustrative results using two examples from the standard literature, namely problems \texttt{gravity} and \texttt{tomo} from the Regularization Toolbox, \cite{Regtools}.  Problem \texttt{gravity} models a 1-D gravity surveying problem  for a point source located at depth $z$ and convolution kernel $K(s,t)=1/z(z^2+(s-t)^2)^{-1.5}$. The conditioning of the problem is worse  with increasing $z$. We chose $z=.75$ as compared to the default $z=.25$ and consider the example for  data measured for the kernel integrated against the source function $f(t)=\sin(\pi t) + 0.5 \sin(2 \pi t)$. Problem \texttt{tomo}  is a two dimensional tomography problem in which each right hand side datum represents a line integral along a randomly selected straight ray penetrating a rectangular domain. Following \cite{Regtools} we embed the structure from problem \texttt{blur} as the source with the domain. These two test problems suggest two different situations for under sampled data. For \texttt{tomo},  it is clear that an undersampled problem is one in  which insufficient rays are collected; the number of available projections through the domain are limited. To generate the data we take the full problem for a given $n$, leading to right hand side samples $d_i$, $i=1:n$ and  to under sample we take those same data and use the first $m$ data points, $d_i$, $i=1:m$, $m<n$. 
For \texttt{gravity}, we again take a full set of data  for the problem of size $n$, but because of the underlying integral equation relationship for the convolution, under sampling represents sampling at a constant rate from the $d_i$, i.e. we take the right hand side data $d(1:\Delta i:n)$ for a chosen integer sample step, $\Delta i$. Because the L-curve and MDP are well-known,   we only present results contrasting UPRE, GCV and $\chi^2$.

\subsubsection{Problem \texttt{gravity}}
We take  full problem size $n=3200$ and use sampling rates $\Delta i = 1$, $2$, $4$, $8$ and $16$, leading to problems of sizes $m\times n$, $m=3200$, $1600$, $800$, $400$, $200$,  so that we can contrast the solutions of the $m<n$ case with those of the  full case  $m=n$. The mean and standard deviation of the relative error over $25$ copies of the data are taken for noise levels $\eta=0.1$ and $\eta=.01$.  Noisy data are obtained as $\bfd^c=\bfd+\eta \max(\bfd) \Theta^c$, $c=1:25$, with $\Theta^c$ sampled from standard normal distribution using Matlab function \texttt{randn}. In downsampling, $\bfd^c$ are found for the full problem, and downsampling is applied to each $\bfd^c$, hence preserving the noise across problem size.   The UPRE and GCV algorithms use $200$ points to find the minimum and the $\chi^2$ is solved with tolerance determined by $\theta=0.90$ in \eqref{rangeP}. Noise levels $\eta=.1$ and $.01$  correspond to white noise variance approximately $.01$, and $.0001$, respectively. Matrices are weighted by the assumption of white noise rather than colored noise.  The results of the mean and standard deviation of the relative error for the $25$ samples are detailed in Tables~\ref{tabgrav1}-\ref{tabgrav2} for the two noise levels, all data sample rates, and for derivative orders in the regularization of order $0$, $1$ and $2$. Some randomly selected illustrative results, at down sampling rates $1$, $2$ and $10$  for each noise level are shown in Figures~\ref{grav1fig}-\ref{grav2fig}.
\begin{figure}[!htb]
\begin{center}
\subfigure[$p=0$ $3200$ ]{\includegraphics[width=.3\textwidth]{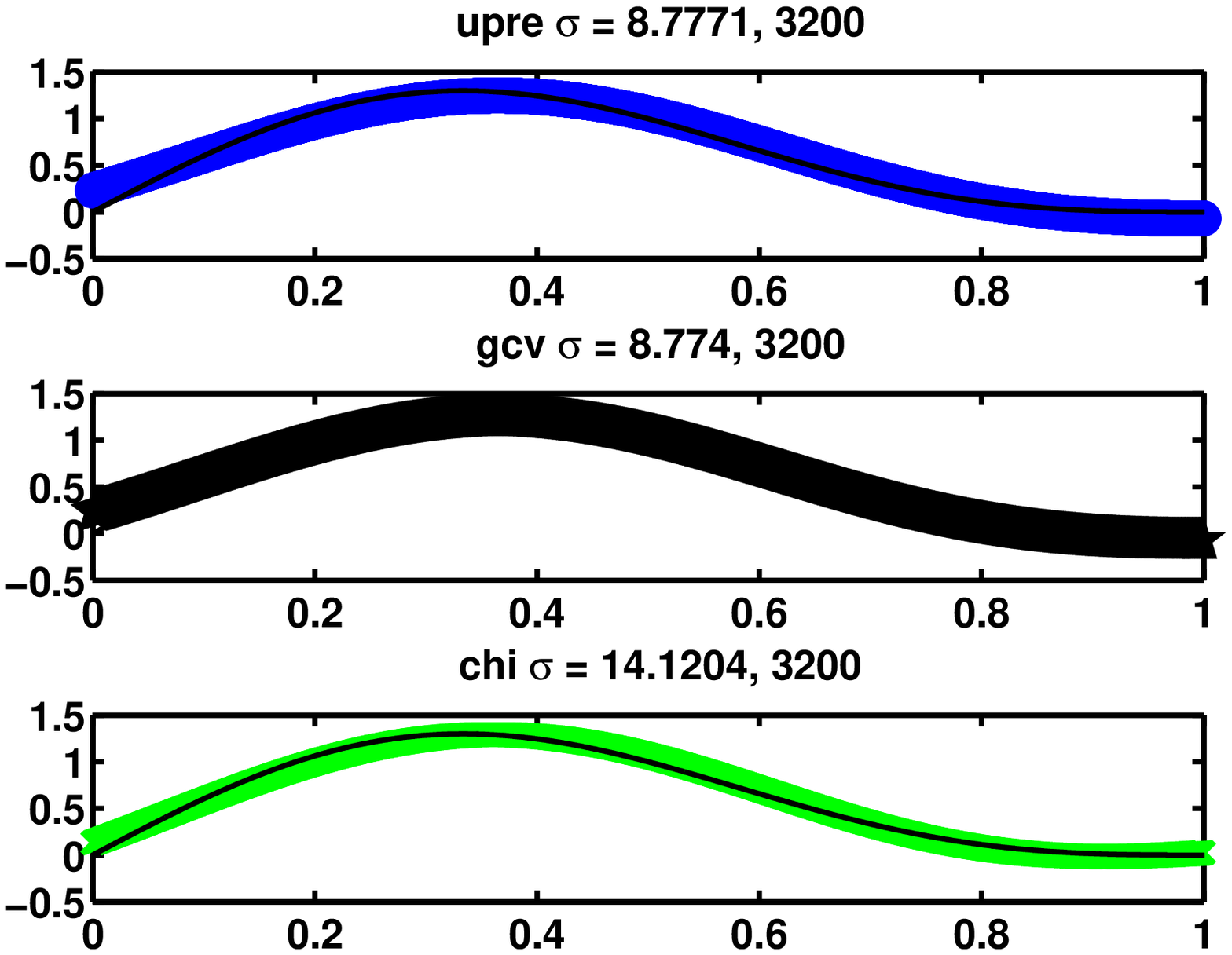}}
\subfigure[$p=0$  $1600$ ]{\includegraphics[width=.3\textwidth]{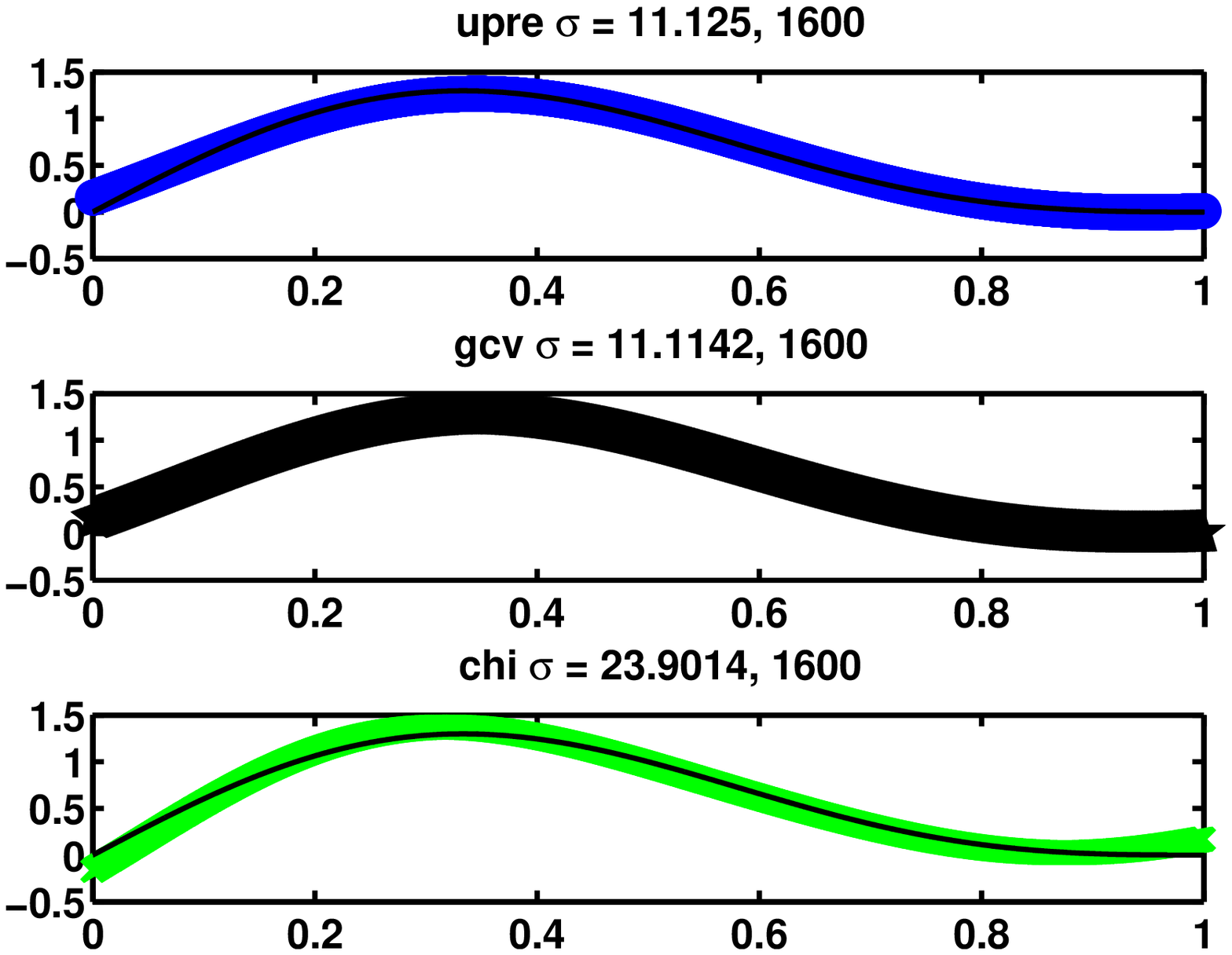}}
\subfigure[$p=0$ $200$ ]{\includegraphics[width=.3\textwidth]{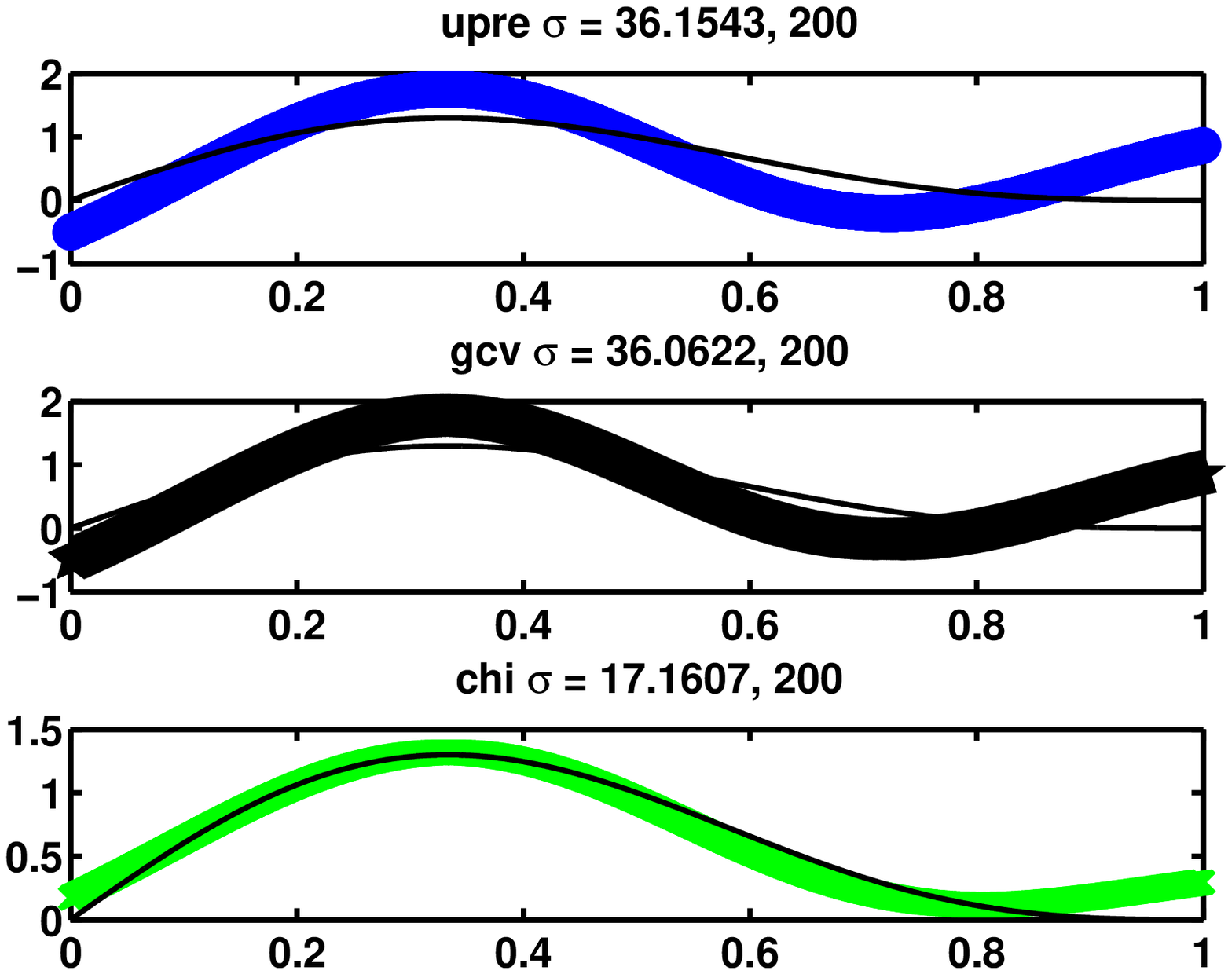}}\\
\subfigure[$p=1$  $3200$ ]{\includegraphics[width=.3\textwidth]{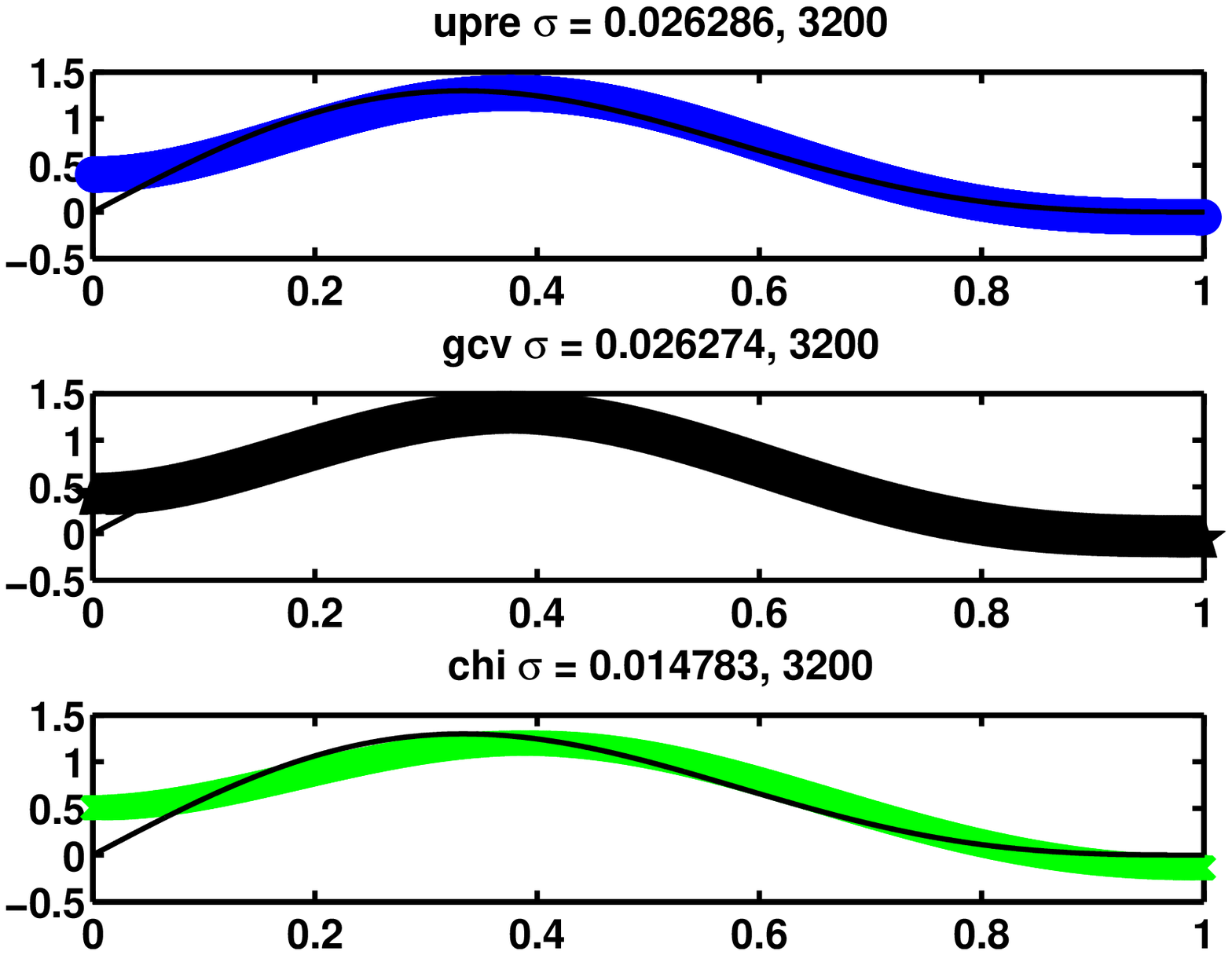}}
\subfigure[$p=1$  $1600$ ]{\includegraphics[width=.3\textwidth]{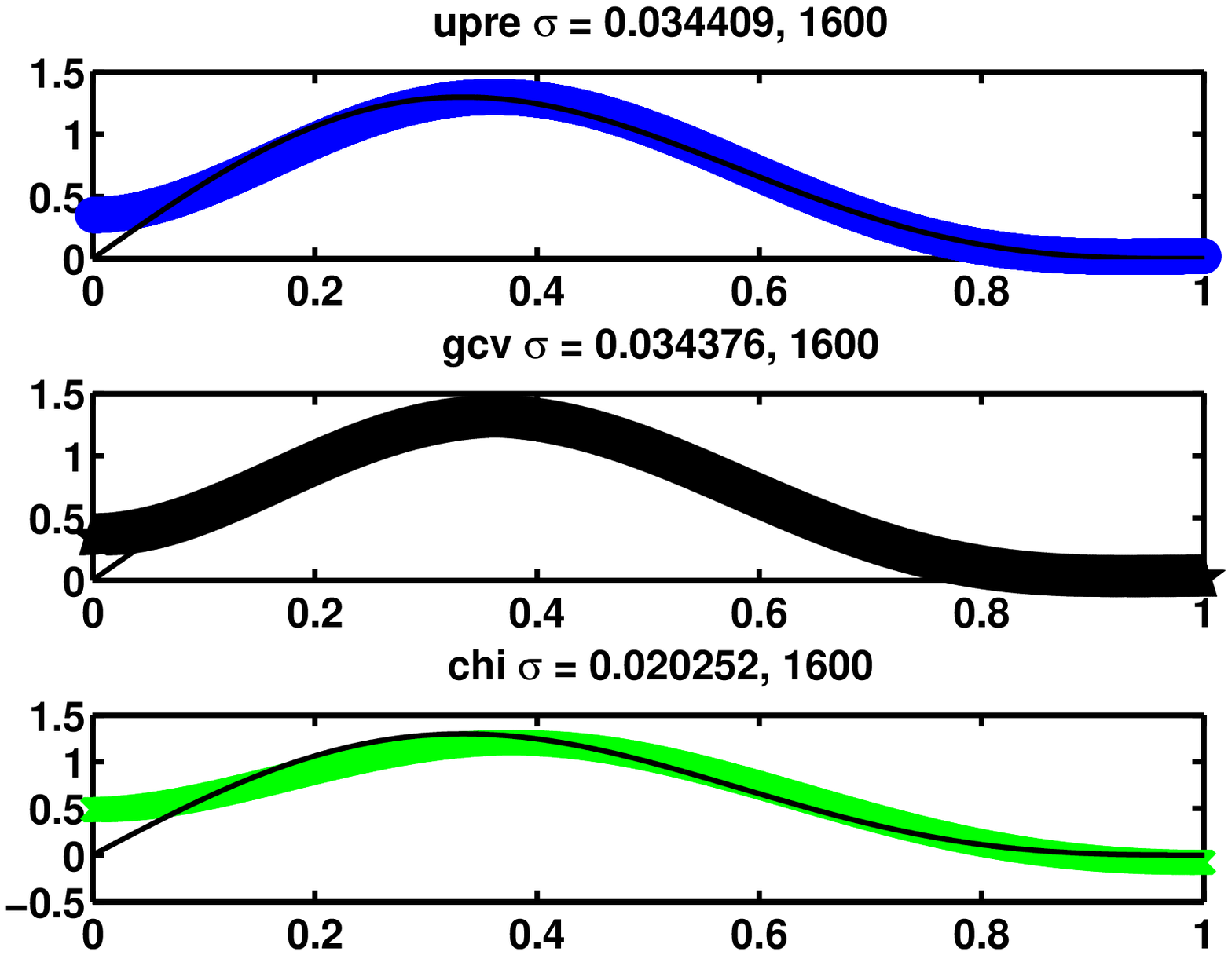}}
\subfigure[$p=1$ $200$ ]{\includegraphics[width=.3\textwidth]{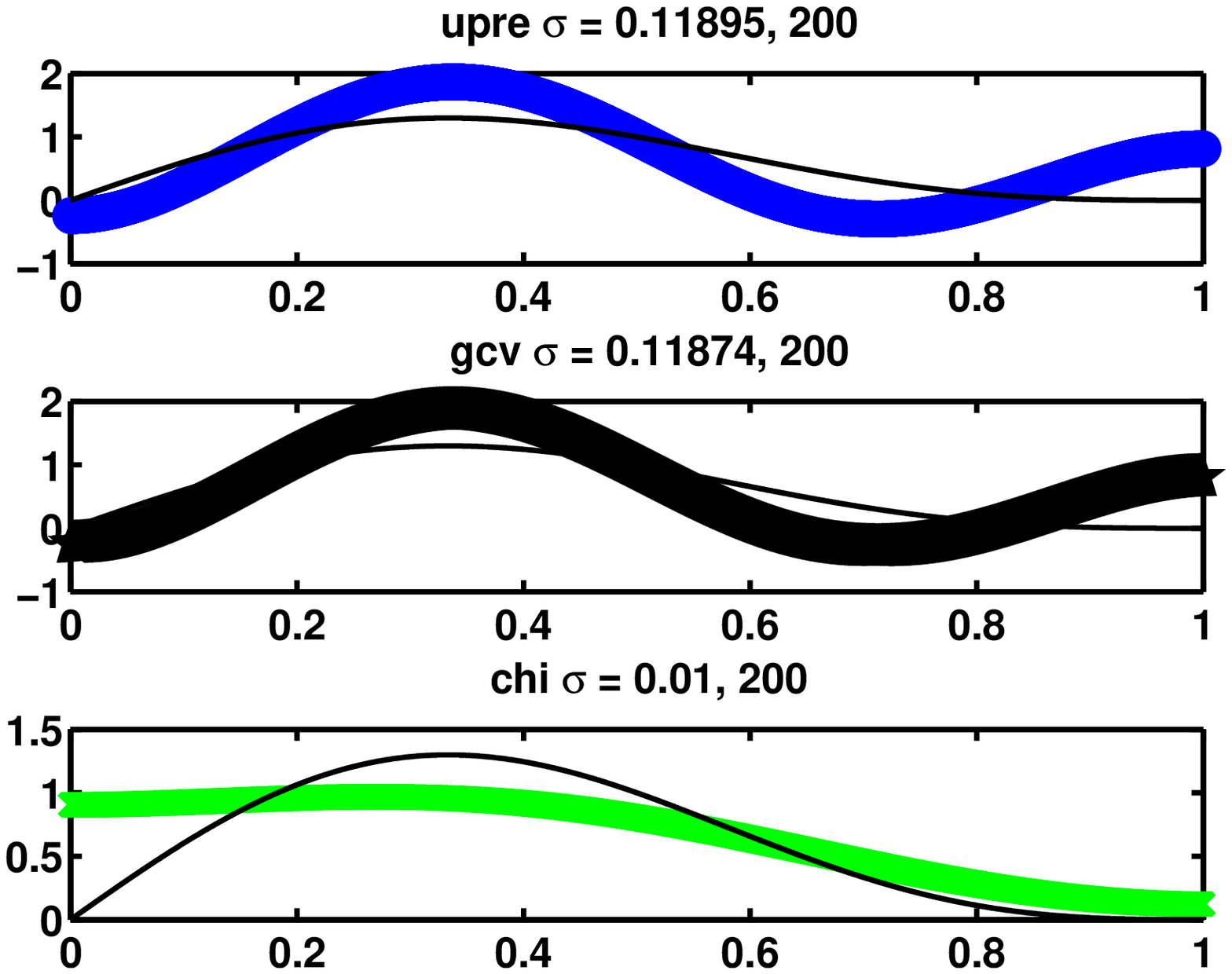}}\\
\subfigure[$p=2$ $3200$ ]{\includegraphics[width=.3\textwidth]{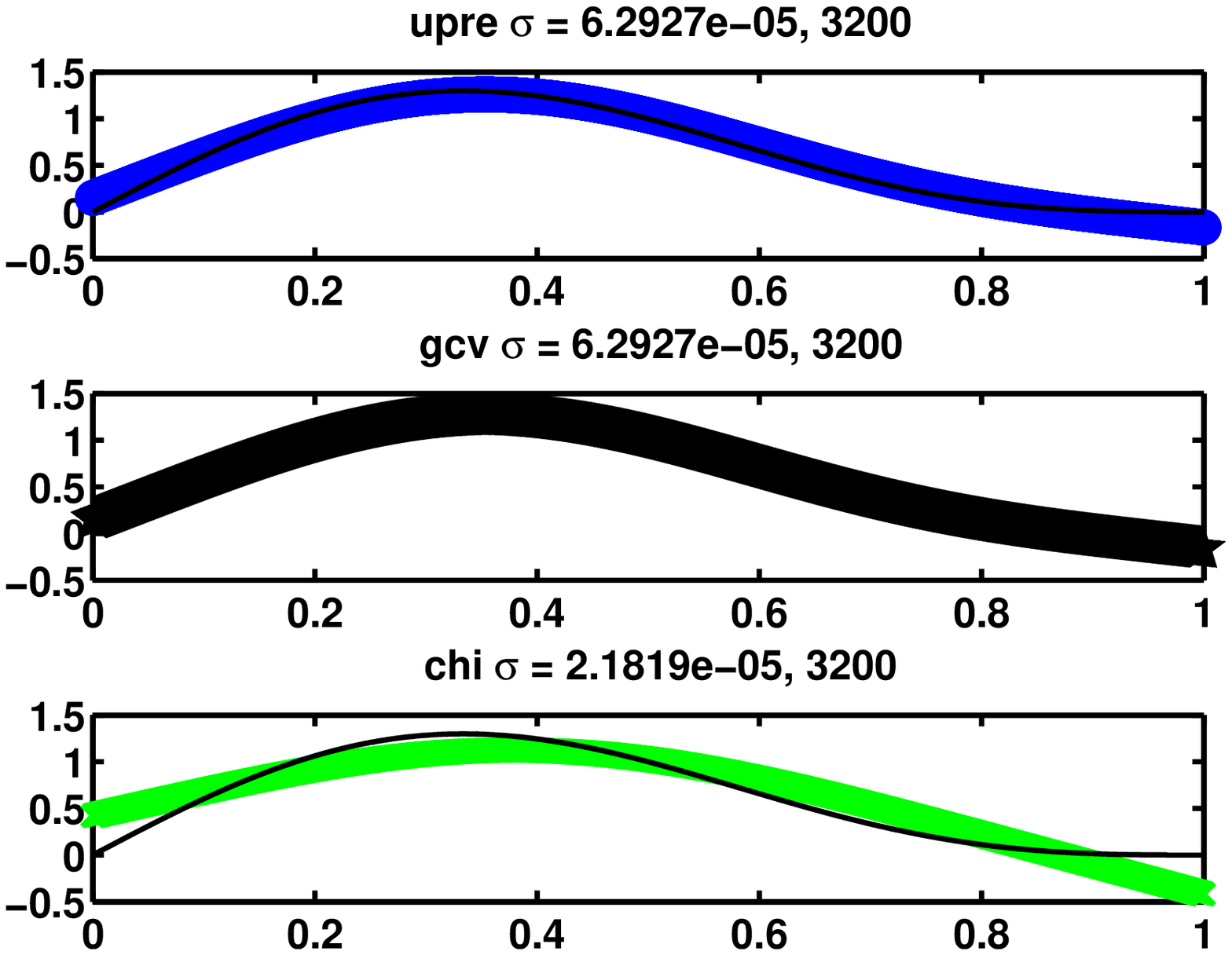}}
\subfigure[$p=2$ $1600$]{\includegraphics[width=.3\textwidth]{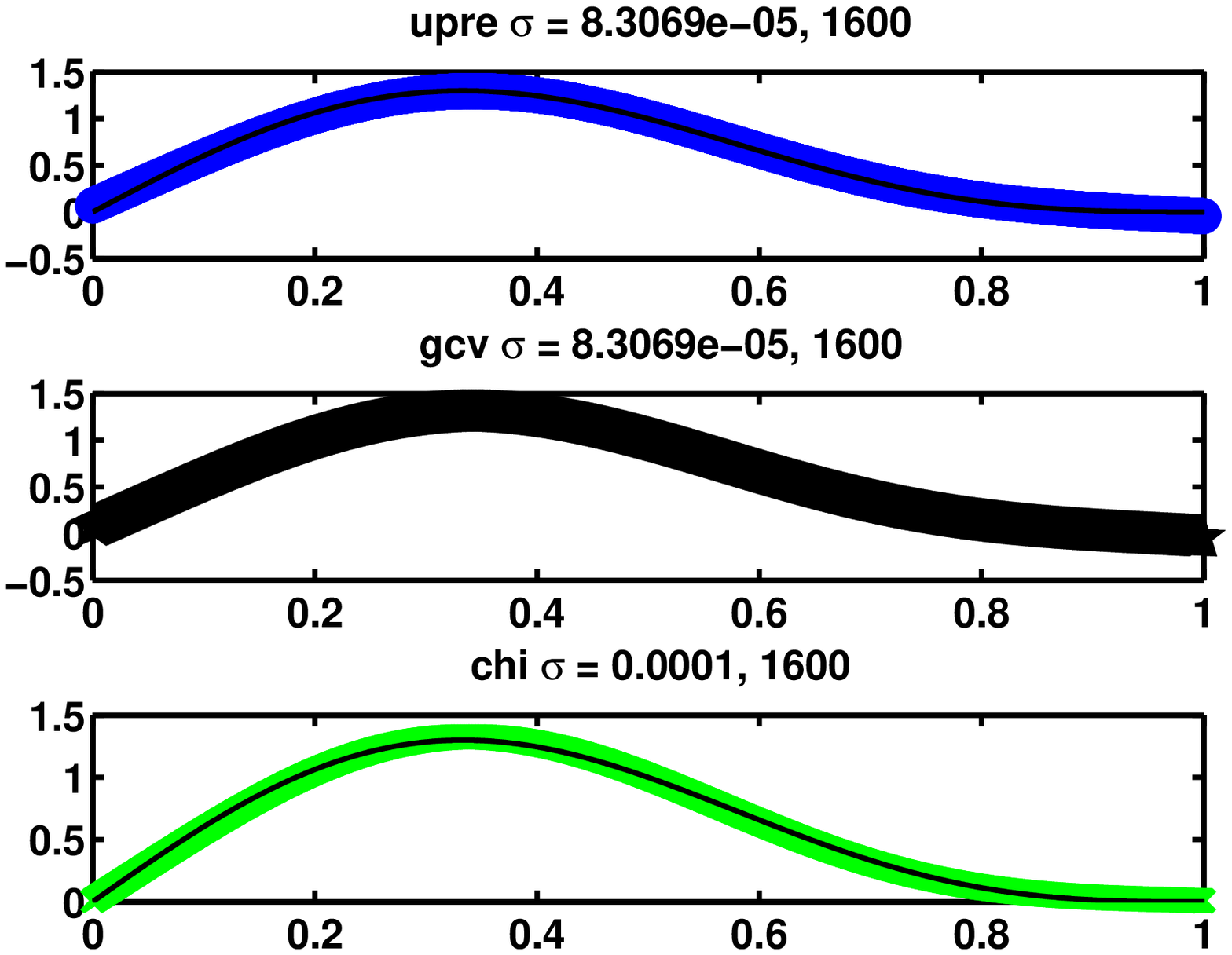}}
\subfigure[$p=2$ $200$ ]{\includegraphics[width=.3\textwidth]{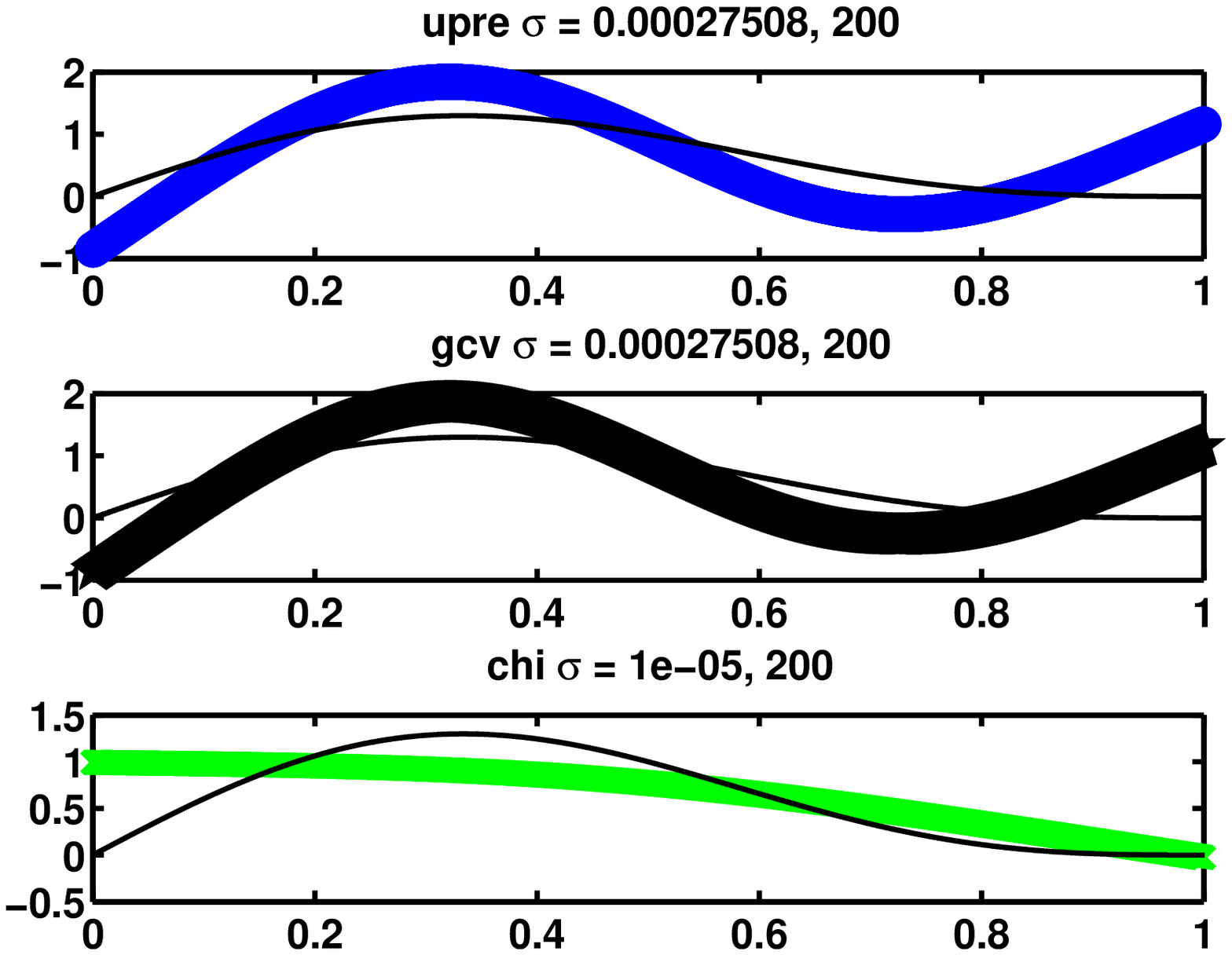}}
\end{center}\caption{Illustrative Results for noise level $.1$ for randomly selected sample right hand side in each case, but the same right hand side for each method. The exact solutions are given by the thin lines in each plot. \label{grav1fig}}
\end{figure}
\begin{figure}[!htb]
\begin{center}
\subfigure[$p=0$ $3200$ ]{\includegraphics[width=.3\textwidth]{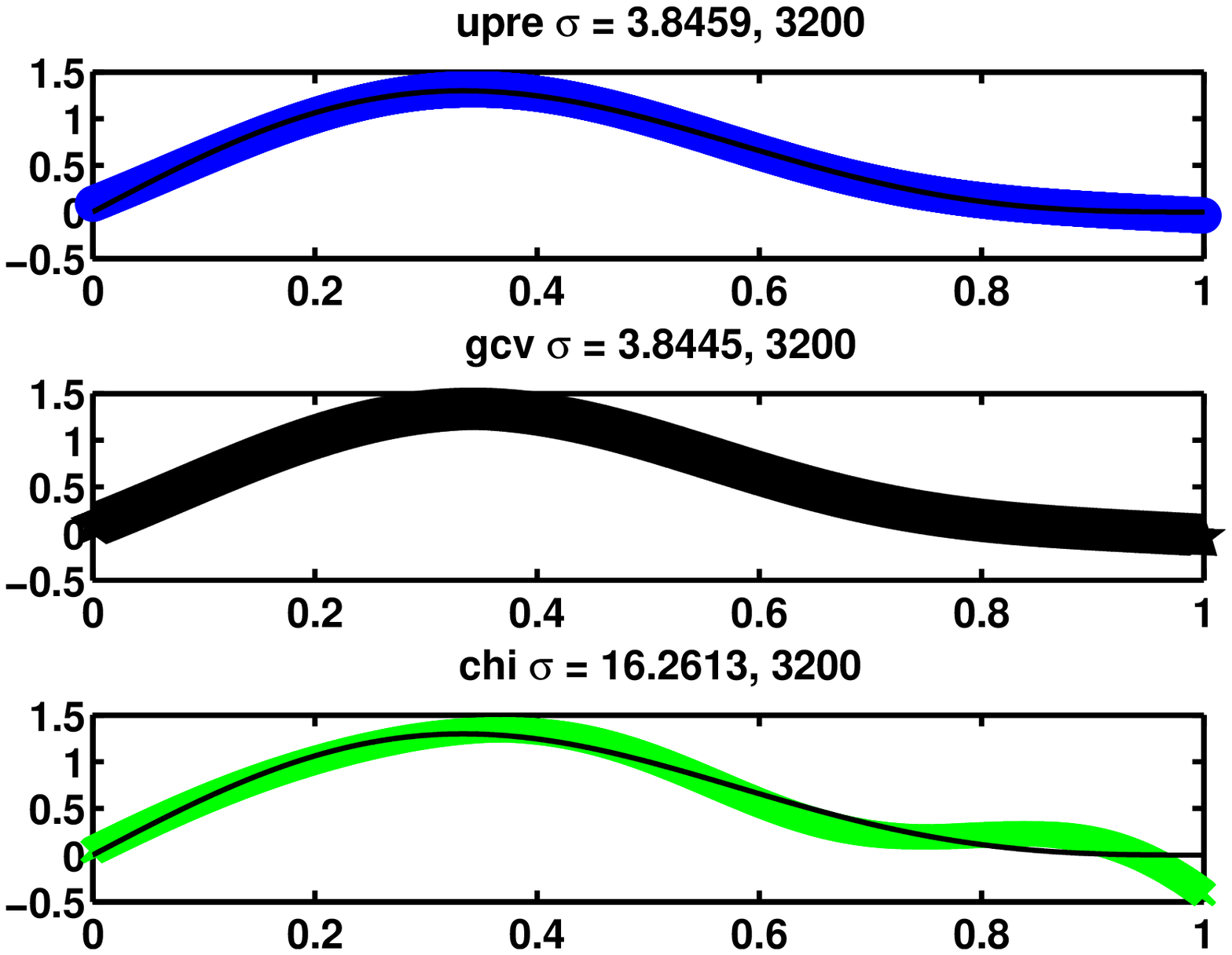}}
\subfigure[$p=0$  $1600$ ]{\includegraphics[width=.3\textwidth]{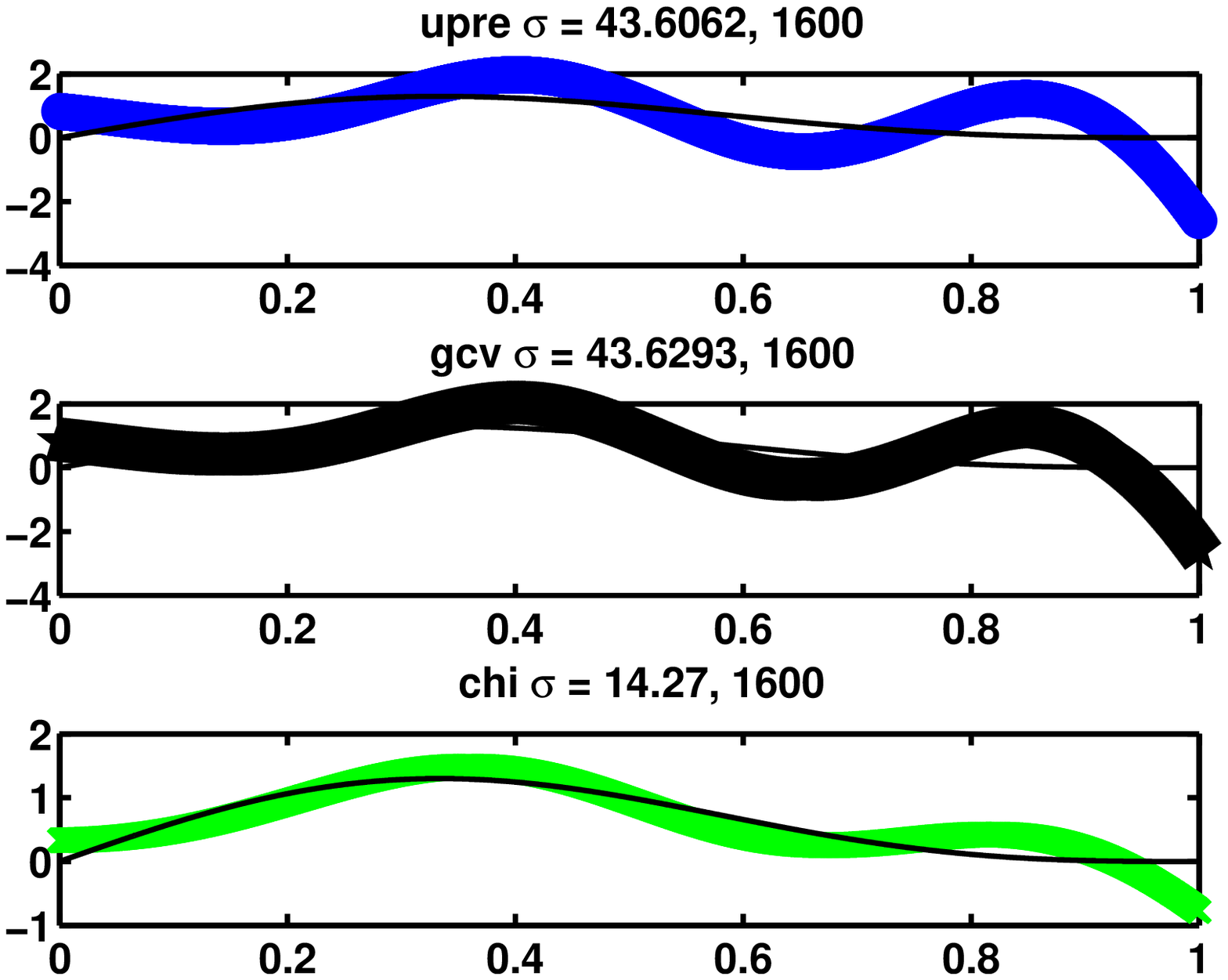}}
\subfigure[$p=0$ $200$ ]{\includegraphics[width=.3\textwidth]{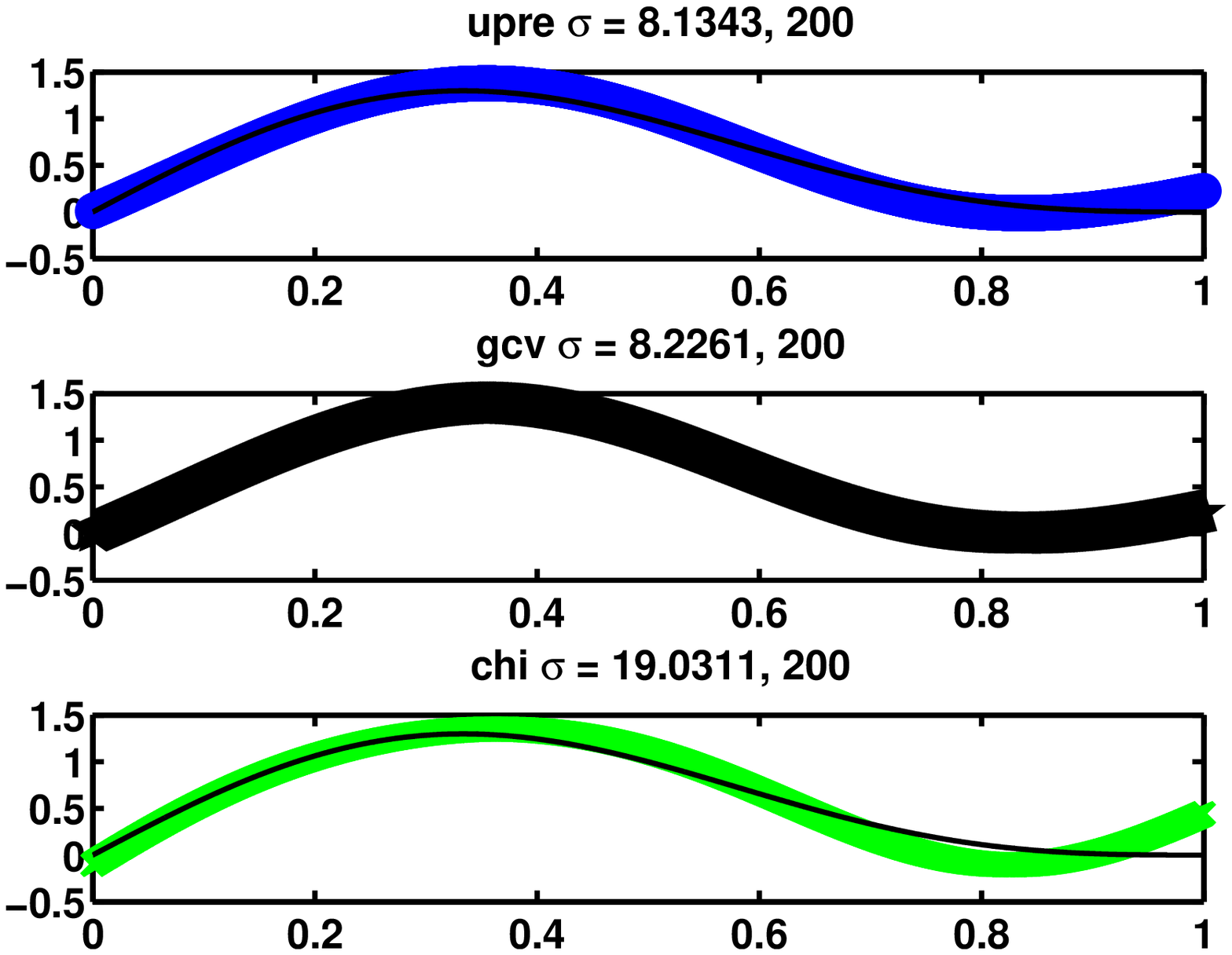}}\\
\subfigure[$p=1$  $3200$ ]{\includegraphics[width=.3\textwidth]{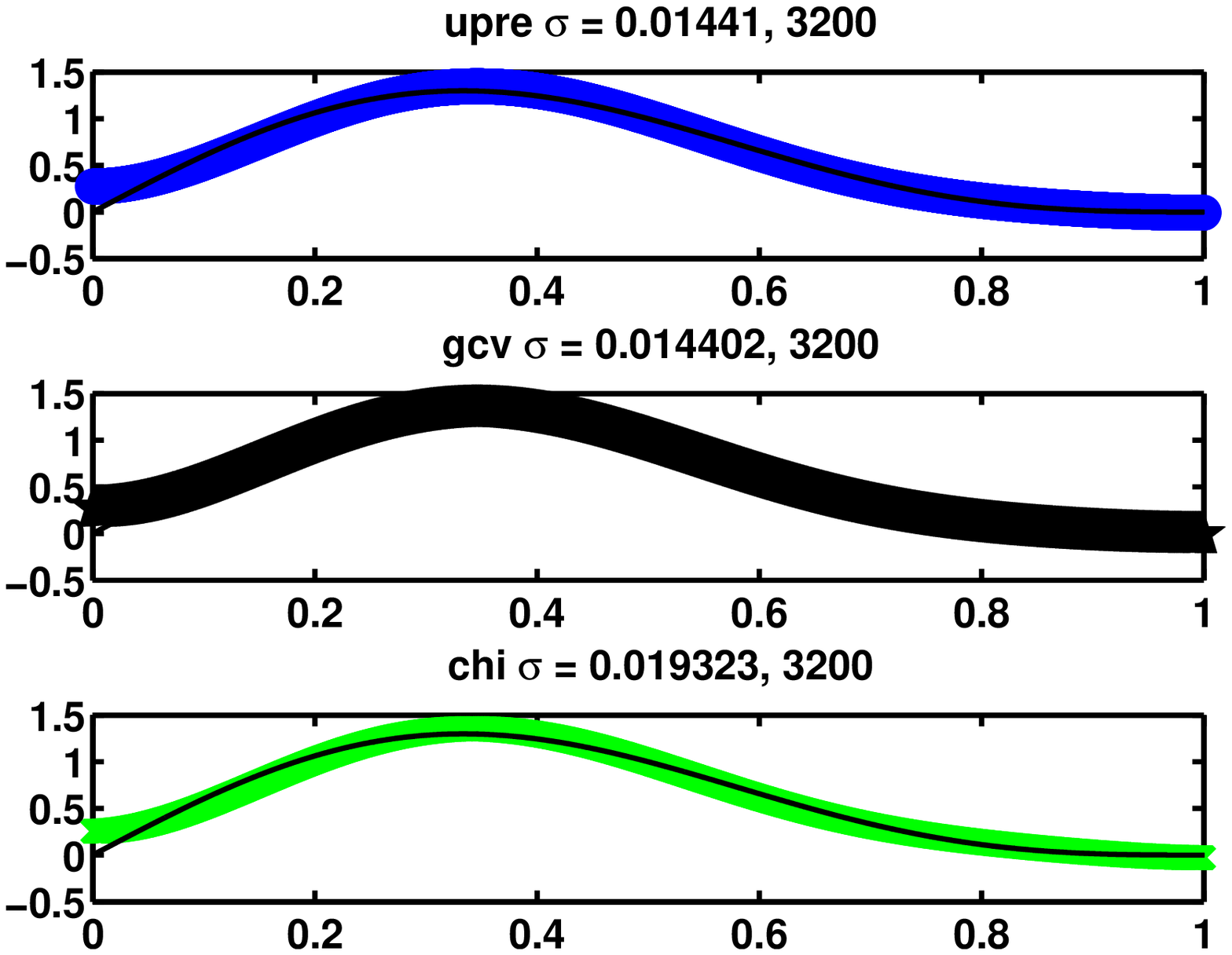}}
\subfigure[$p=1$  $1600$ ]{\includegraphics[width=.3\textwidth]{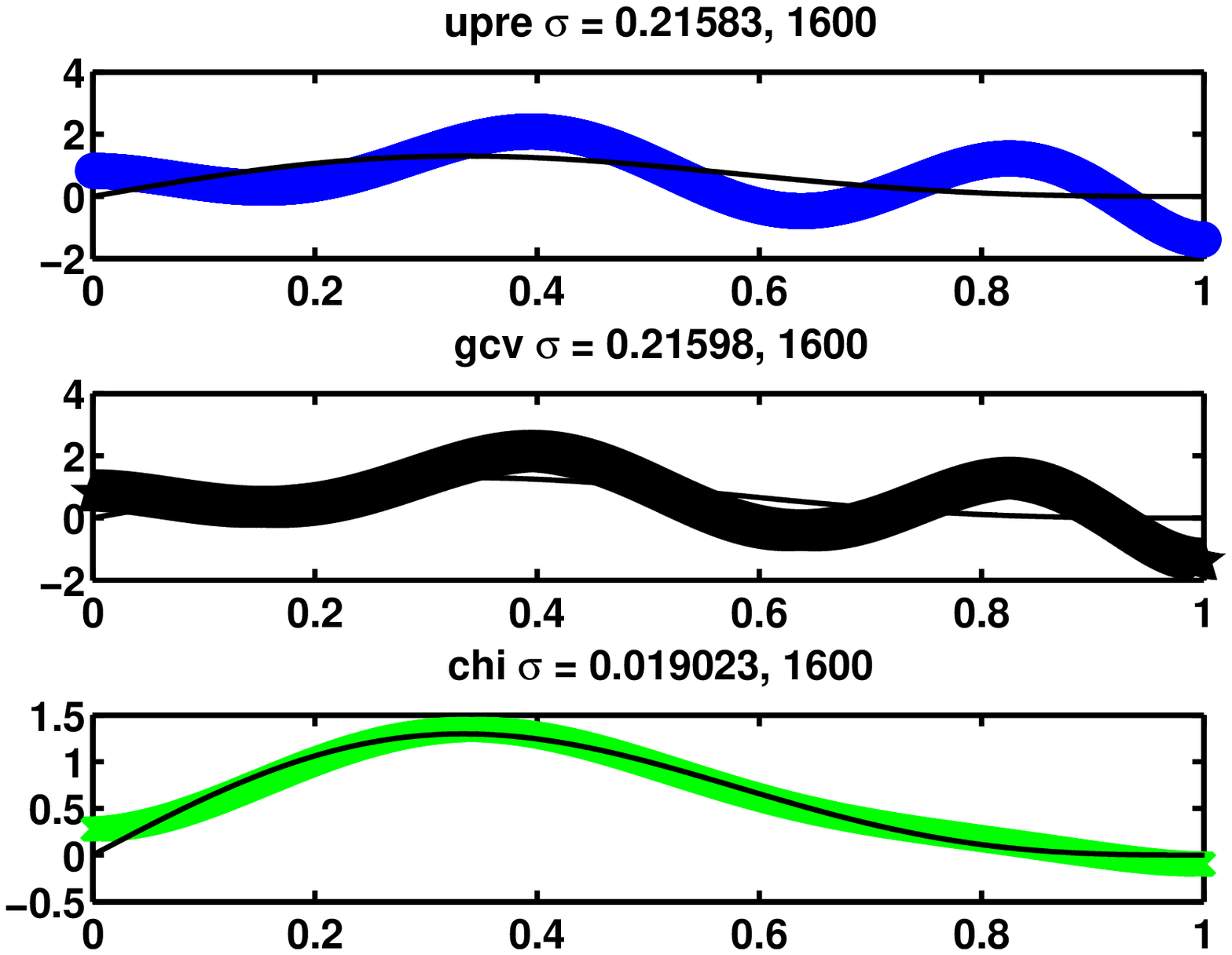}}
\subfigure[$p=1$ $200$ ]{\includegraphics[width=.3\textwidth]{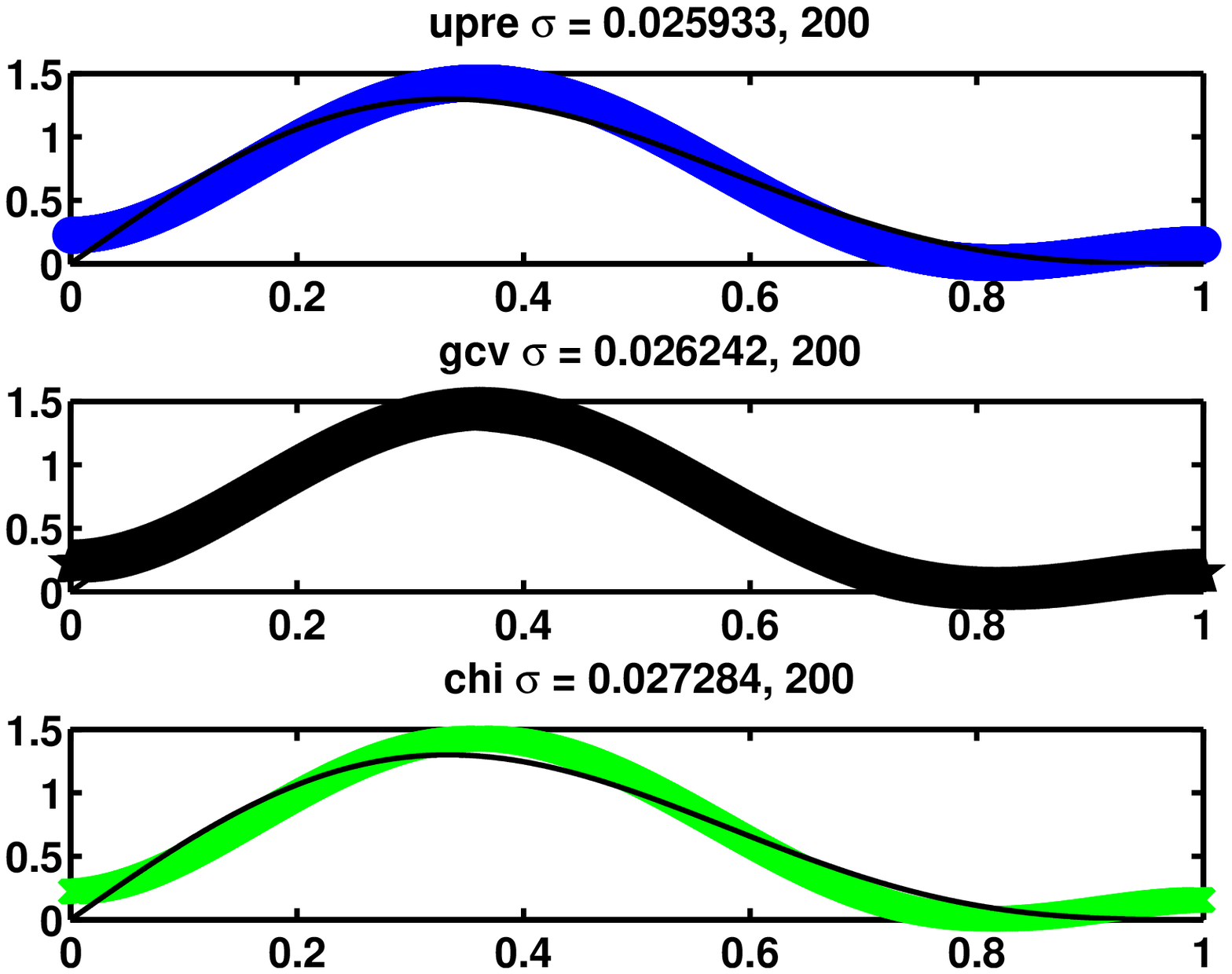}}\\
\subfigure[$p=2$ $3200$ ]{\includegraphics[width=.3\textwidth]{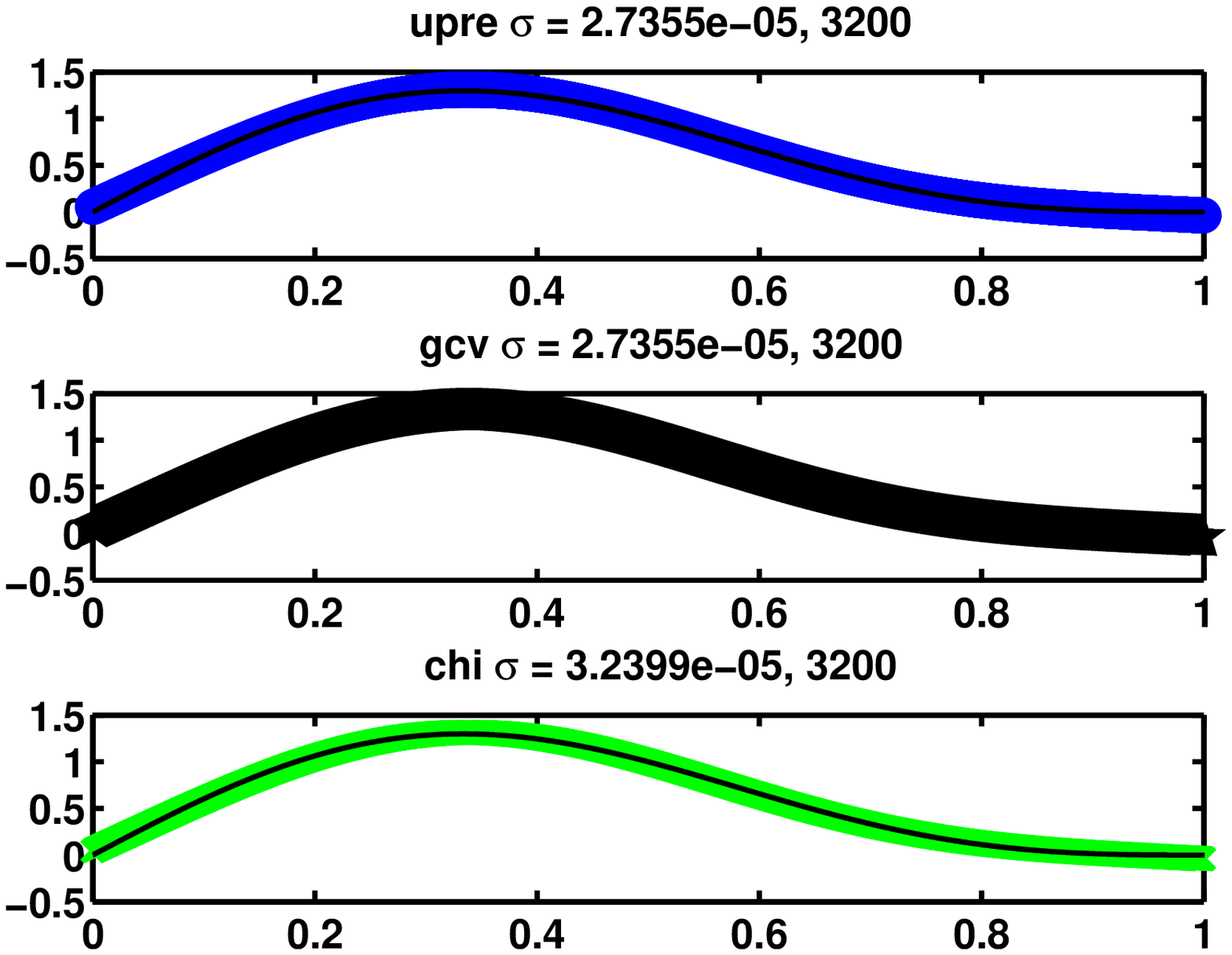}}
\subfigure[$p=2$ $1600$]{\includegraphics[width=.3\textwidth]{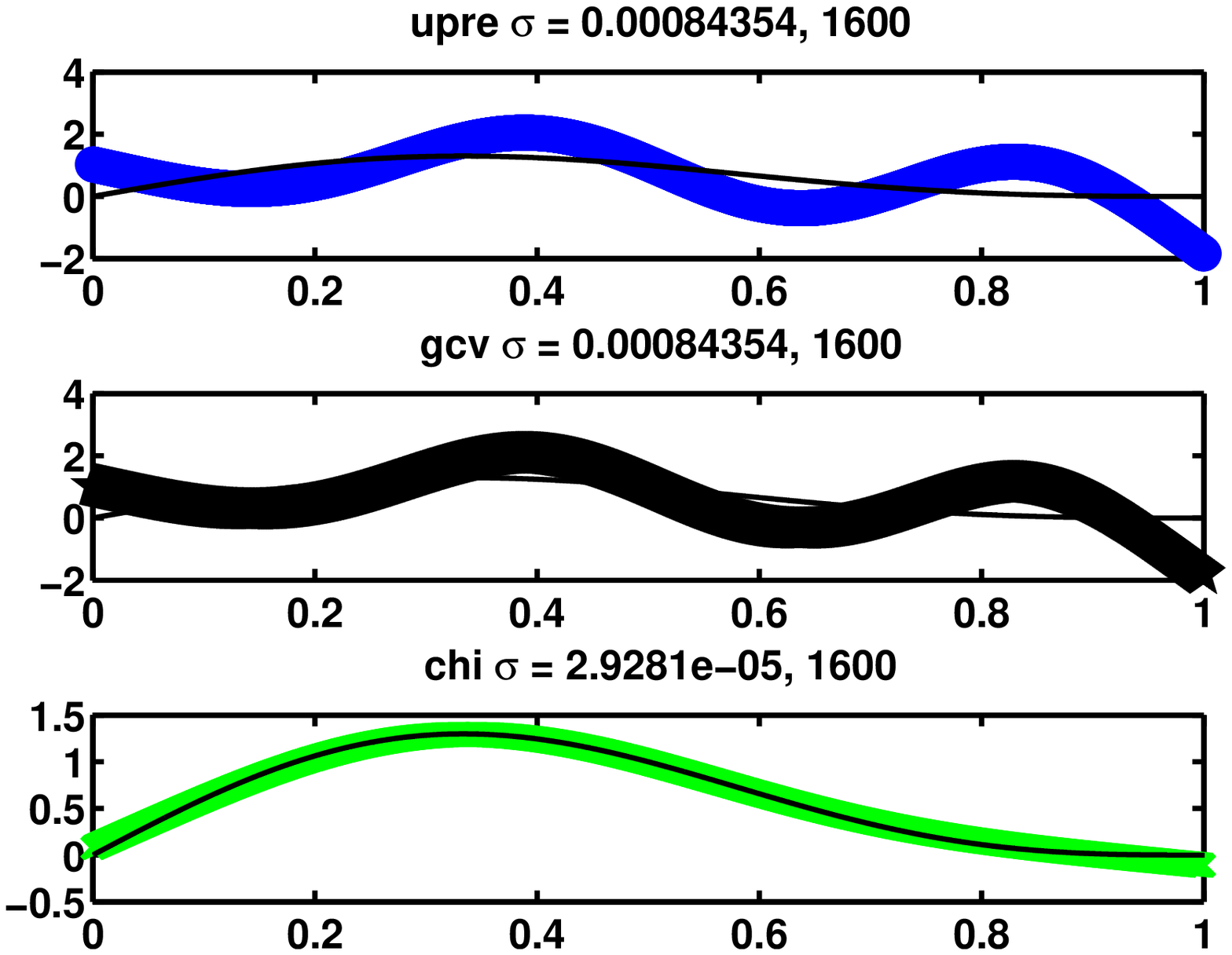}}
\subfigure[$p=2$ $200$ ]{\includegraphics[width=.3\textwidth]{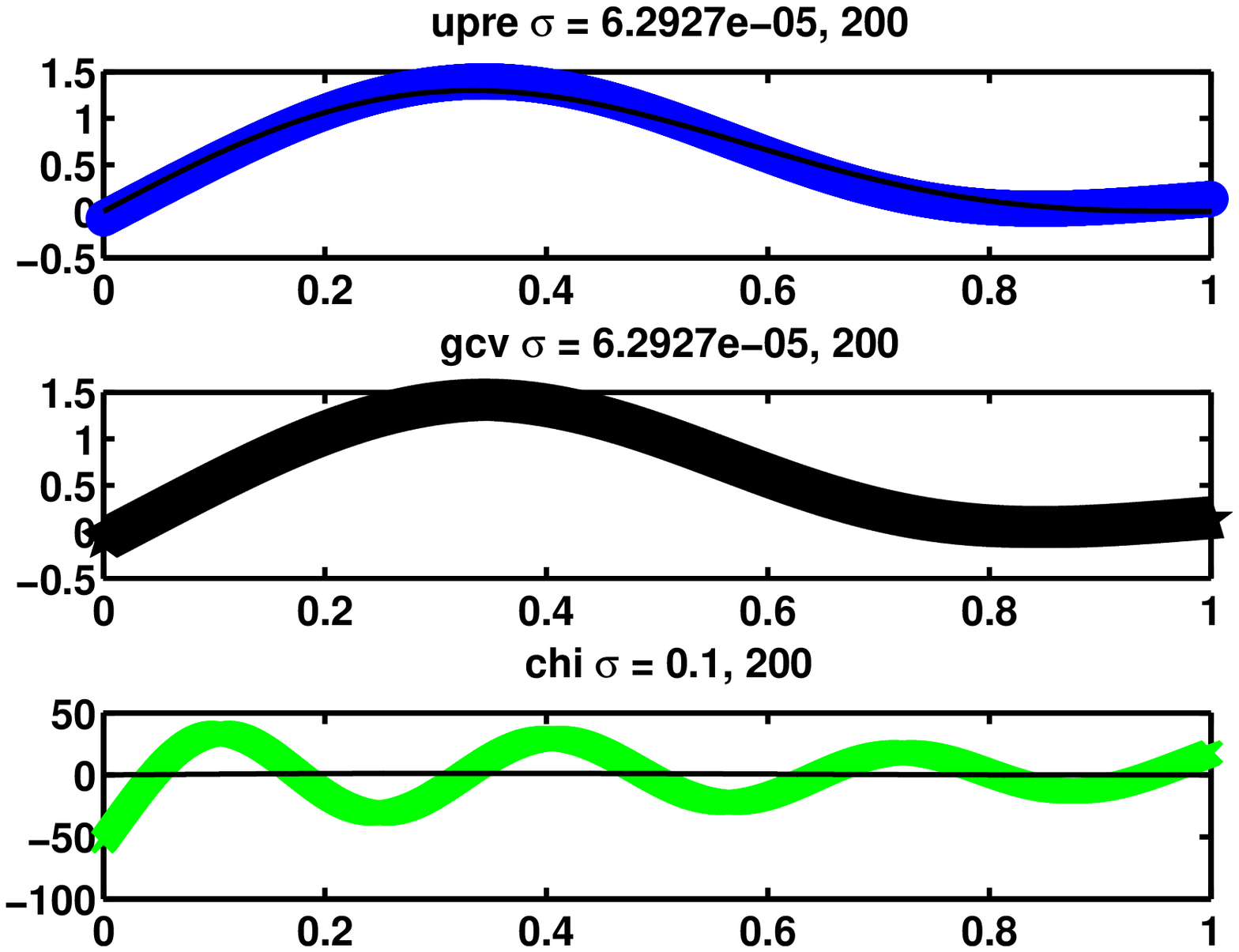}}
\end{center}\caption{Illustrative Results for noise level $.01$ for randomly selected sample right hand side in each case, but the same right hand side for each method. The exact solutions are given by the thin lines in each plot. \label{grav2fig}}
\end{figure}

The quantitative results presented in Tables~\ref{tabgrav1}-\ref{tabgrav2}, with the \textit{best} results in each case in bold,  demonstrate the remarkable consistency of the UPRE and GCV results. Application of the  $\chi^2$-principle is not as successful when $L=I$ for which the lack of a useful prior estimate for $\hat{\bfm}$, theoretically required to apply the central version of Theorem~\ref{newtheorem},  has a far greater impact.  On the other hand, for derivatives of order $1$ and $2$ this information is less necessary and competitive results are obtained, particularly for the lower noise level. Results in Figures~\ref{grav1fig}-\ref{grav2fig} demonstrate that all algorithms can   succeed, even with significant under sampling, $m=200$, but also may fail even for $m=1600$. When calculating for individual cases, rather than multiple cases at a time as with the results here, it is possible to adjust the number of points used in the minimization for the UPRE or GCV functionals. For the $\chi^2$ method it is possible to adjust the tolerance on root finding, or  apply filtering of the singular values,  with commensurate adjustment of the degrees of freedom, dependent on analysis of the root finding curve. It is clear that these are worst case results for the $\chi^2$-principle because of the lack of use of prior information. 

\begin{table}[!ht]\begin{center}\begin{tabular}{|*{6}{c|}}\hline
$m$&$3200$&$1600$&$800$&$400$&$200$\\ \hline
Method&  \multicolumn{5}{c|}{Derivative Order $0$} \\ \hline
UPRE&$\textbf{.175(.088)}$&$\textbf{.218(.158)}$&$\textbf{.213(.082)}$&$\textbf{.239(.098)}$&$.331(.204)$\\ \hline
GCV&\textbf{$\textbf{.175(.088)}$}&$\textbf{.218(.158)}$&$\textbf{.213(.082)}$&$\textbf{.239(.098)}$&$.332(.205)$\\ \hline
$\chi^2$&$.223(.179)$&$.273(.234)$&$.331(.180)$&$.327(.186)$&$\textbf{.290(.161)}$\\ \hline
&  \multicolumn{5}{c|}{Derivative Order $1$} \\ \hline
UPRE&$.202(.084)$&$\textbf{.248(.151)}$&$\textbf{.238(.077)}$&$\textbf{.260(.088)}$&$.336(.201)$\\ \hline
GCV&$.202(.084)$&$\textbf{.248(.151)}$&$\textbf{.238(.077)}$&$\textbf{.260(.088)}$&$.337(.202)$\\ \hline
$\chi^2$&$\textbf{.190(.052)}$&$.260(.171)$&$.272(.093)$&$.286(.116)$&$\textbf{.305(.065)}$\\ \hline
&  \multicolumn{5}{c|}{Derivative Order $2$} \\ \hline
UPRE&$\textbf{.195(.111)}$&$\textbf{.246(.160)}$&$\textbf{.257(.087)}$&$.280(.094)$&$\textbf{.361(.188)}$\\ \hline
GCV&$\textbf{.195(.111)}$&$\textbf{.246(.160)}$&$\textbf{.257(.087)}$&$\textbf{.279(.093)}$&$\textbf{.361(.188)}$\\ \hline
$\chi^2$&$.226(.087)$&$.258(.084)$&$.430(.230)$&$.338(.161)$&$.397(.175)$\\ \hline
\hline
\end{tabular}\caption{The mean and standard deviation of the relative error over $25$ copies of the data with noise level $.1$. In each case $n=3200$ and downsampling is obtained by sampling at a sampling rate $1$, $2$, $4$, $8$ and $16$. Best results for each case in boldface.}\label{tabgrav1}\end{center}\end{table}

\begin{table}[!ht]\begin{center}\begin{tabular}{|*{6}{c|}}\hline
$m$&$3200$&$1600$&$800$&$400$&$200$\\ \hline
Method&  \multicolumn{5}{c|}{Derivative Order $0$} \\ \hline
UPRE&$\textbf{.149(.205)}$&$\textbf{.075(.122)}$&$\textbf{.199(.301)}$&$\textbf{.120(.103)}$&$\textbf{.139(.081)}$\\\hline
GCV&$\textbf{.149(.205)}$&$.075(.123)$&$\textbf{.199(.301)}$&$.120(.104)$&$\textbf{.139(.081)}$\\\hline
$\chi^2$&$.255(.165)$&$.166(.130)$&$.300(.272)$&$.232(.120)$&$.267(.176)$\\\hline 
&  \multicolumn{5}{c|}{Derivative Order $1$} \\ \hline
UPRE&$.164(.197)$&$.108(.123)$&$.187(.258)$&$.164(.161)$&$\textbf{.155(.067)}$\\\hline
GCV&$.164(.197)$&$.108(.123)$&$.187(.258)$&$.164(.161)$&$\textbf{.155(.067)}$\\\hline
$\chi^2$&$\textbf{.151(.202)}$&$\textbf{.088(.030)}$&$\textbf{.137(.140)}$&$\textbf{.119(.058)}$&$.178(.197)$\\\hline 
&  \multicolumn{5}{c|}{Derivative Order $2$} \\ \hline
UPRE&$.125(.203)$&$.063(.122)$&$.104(.199)$&$.102(.110)$&$\textbf{.101(.063)}$\\\hline
GCV&$.125(.203)$&$.063(.122)$&$.104(.199)$&$\textbf{.095(.103)}$&$\textbf{.101(.063)}$\\\hline
$\chi^2$&$\textbf{.051(.034)}$&$\textbf{.045(.030)}$&$\textbf{.061(.040)}$&$.148(.209)$&$.187(.228)$\\\hline 
\end{tabular}\caption{The mean and standard deviation of the relative error over $25$ copies of the data with noise level $.01$. In each case $n=3200$ and downsampling is obtained by sampling at a sampling rate $1$, $2$, $4$, $8$ and $16$. Best results for each case in boldface. }\label{tabgrav2}\end{center}\end{table}

\subsubsection{Problem \texttt{tomo}}
Figure~\ref{tomo1fig} illustrates results for data contaminated by random noise with variance $.0004$ and $.0001$, $\eta=.02$ and $\eta=.01$, respectively, with solutions obtained with regularization using a first order derivative operator, and sampled using $100\%$, $75\%$ and $50\%$ of the data. At these noise levels, the quality of the solutions when obtained with $m=n$ are also not ideal, but do demonstrate that with reduction of sampling it is still possible to apply parameter estimation techniques to find effective solutions, i.e.  all methods succeed in finding useful regularization parameters, demonstrating again that these techniques can be used for under sampled data sets. 

Overall the results for \texttt{gravity} and \texttt{tomo} demonstrate that algorithms for regularization parameter estimation can be  successfully applied for problems with fewer samples than desirable.

\begin{figure}[!htb]
\begin{center}
\subfigure[$3600(.301)$]{\includegraphics[width=.15\textwidth]{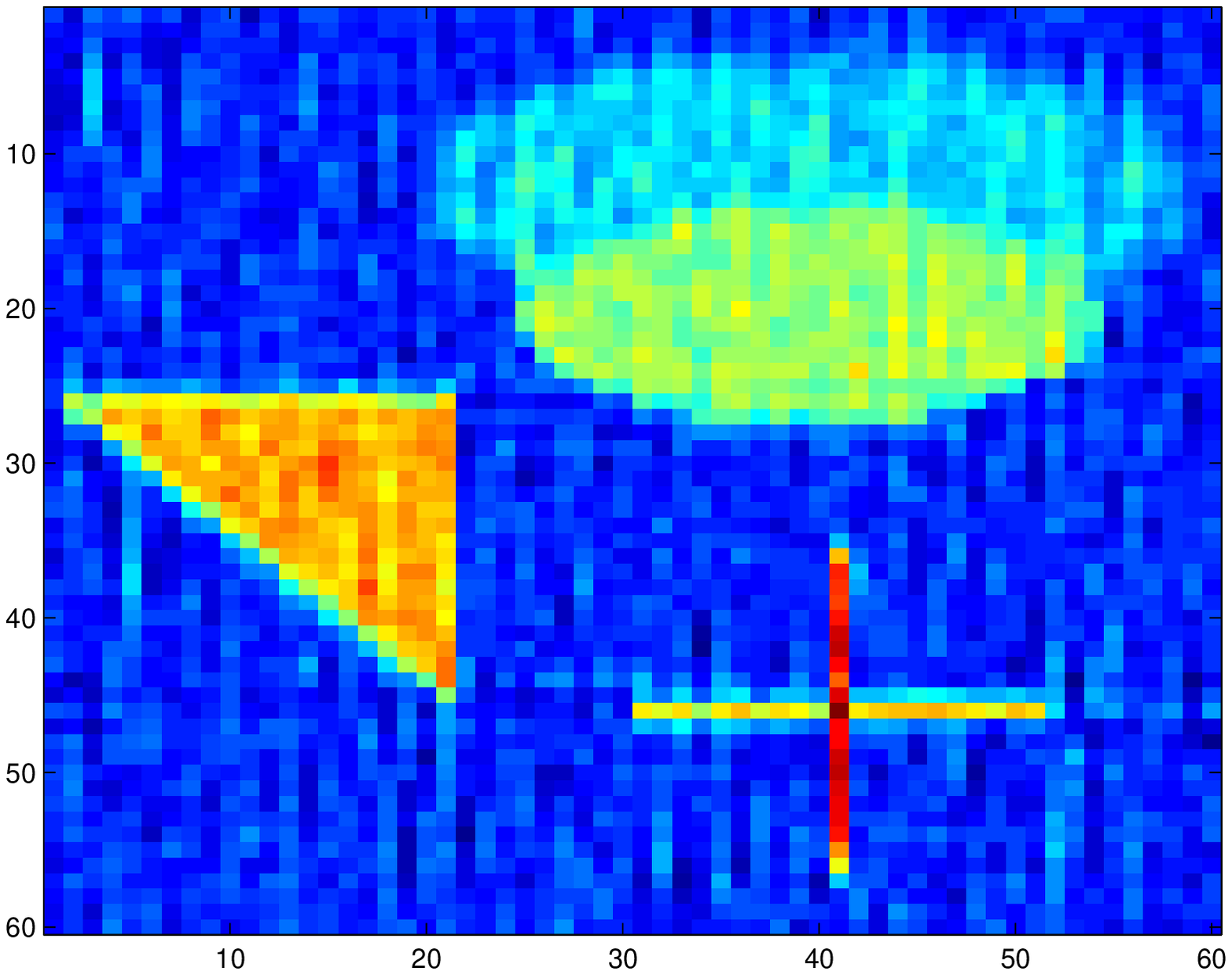}}
\subfigure[$2700(.329)$]{\includegraphics[width=.15\textwidth]{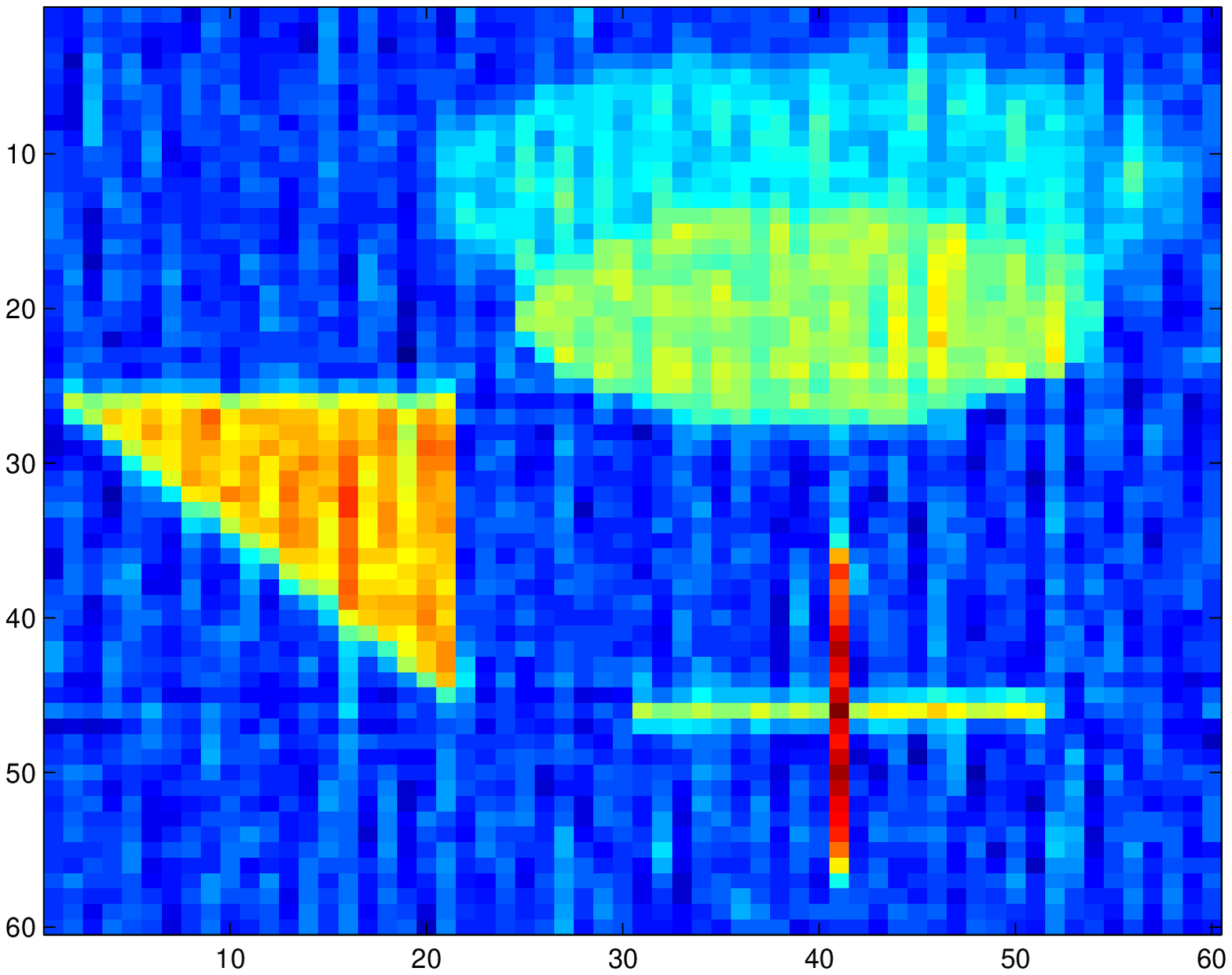}}
\subfigure[$1800(.387)$]{\includegraphics[width=.15\textwidth]{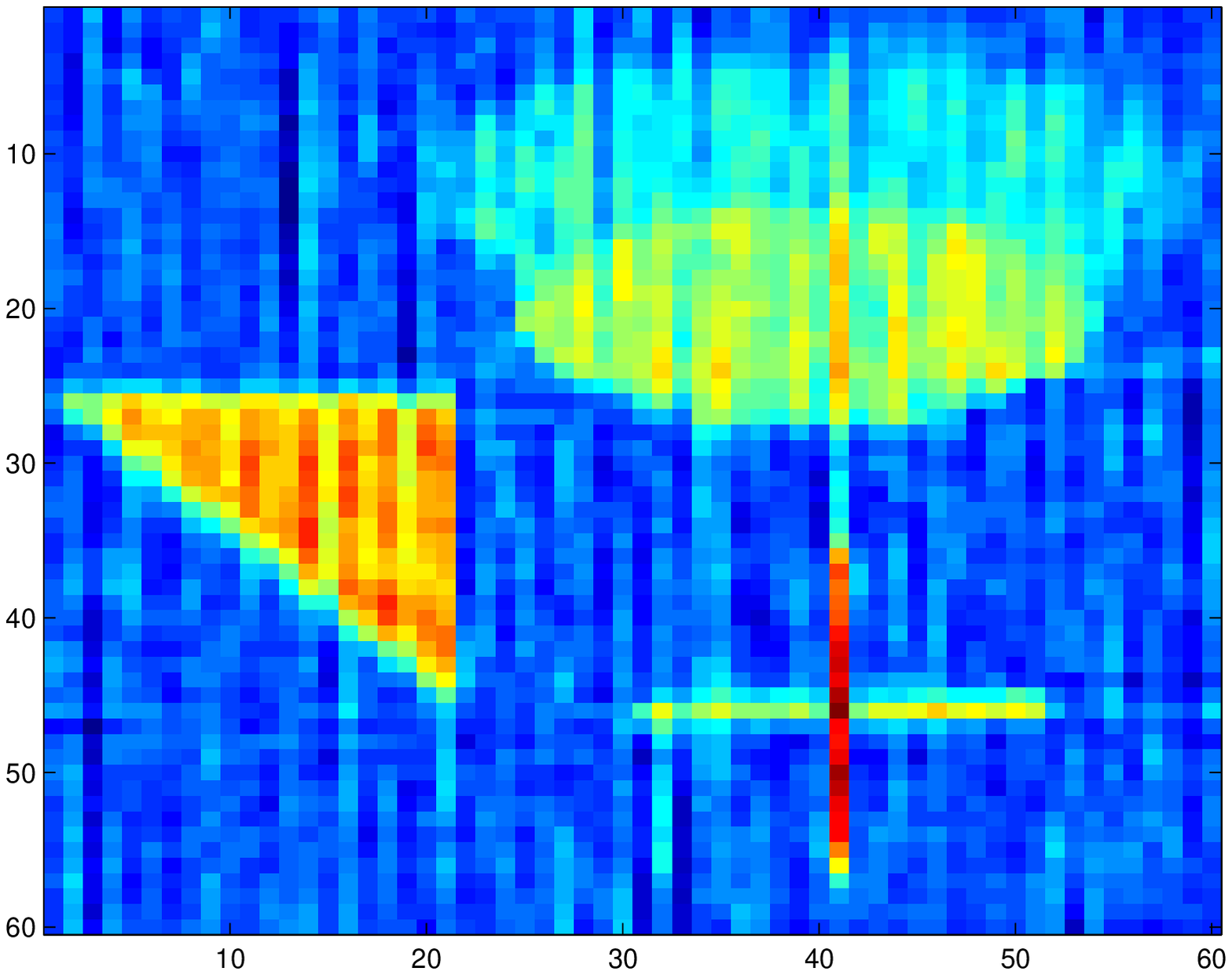}}
\subfigure[$3600(.224)$]{\includegraphics[width=.15\textwidth]{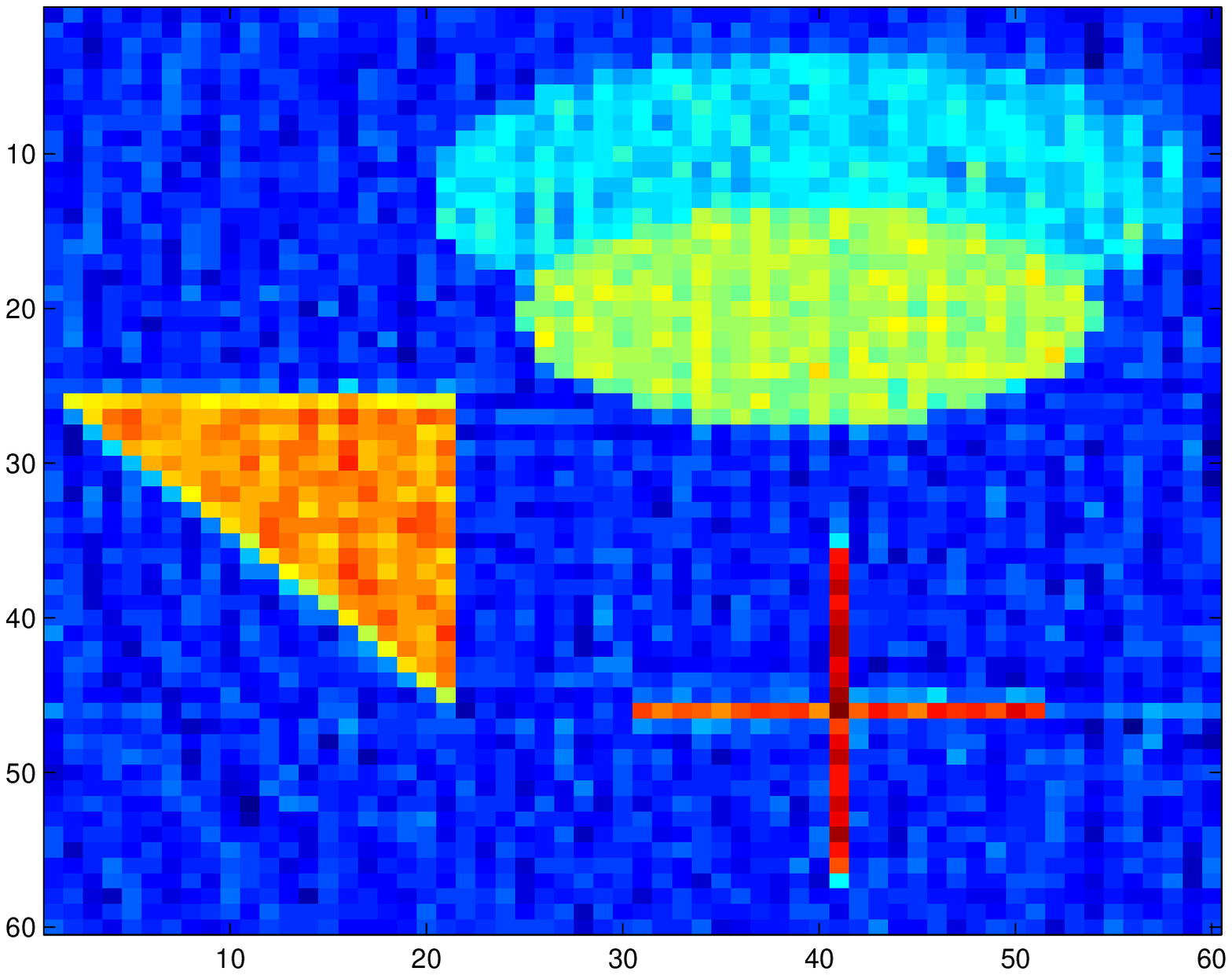}}
\subfigure[$2700(.268)$]{\includegraphics[width=.15\textwidth]{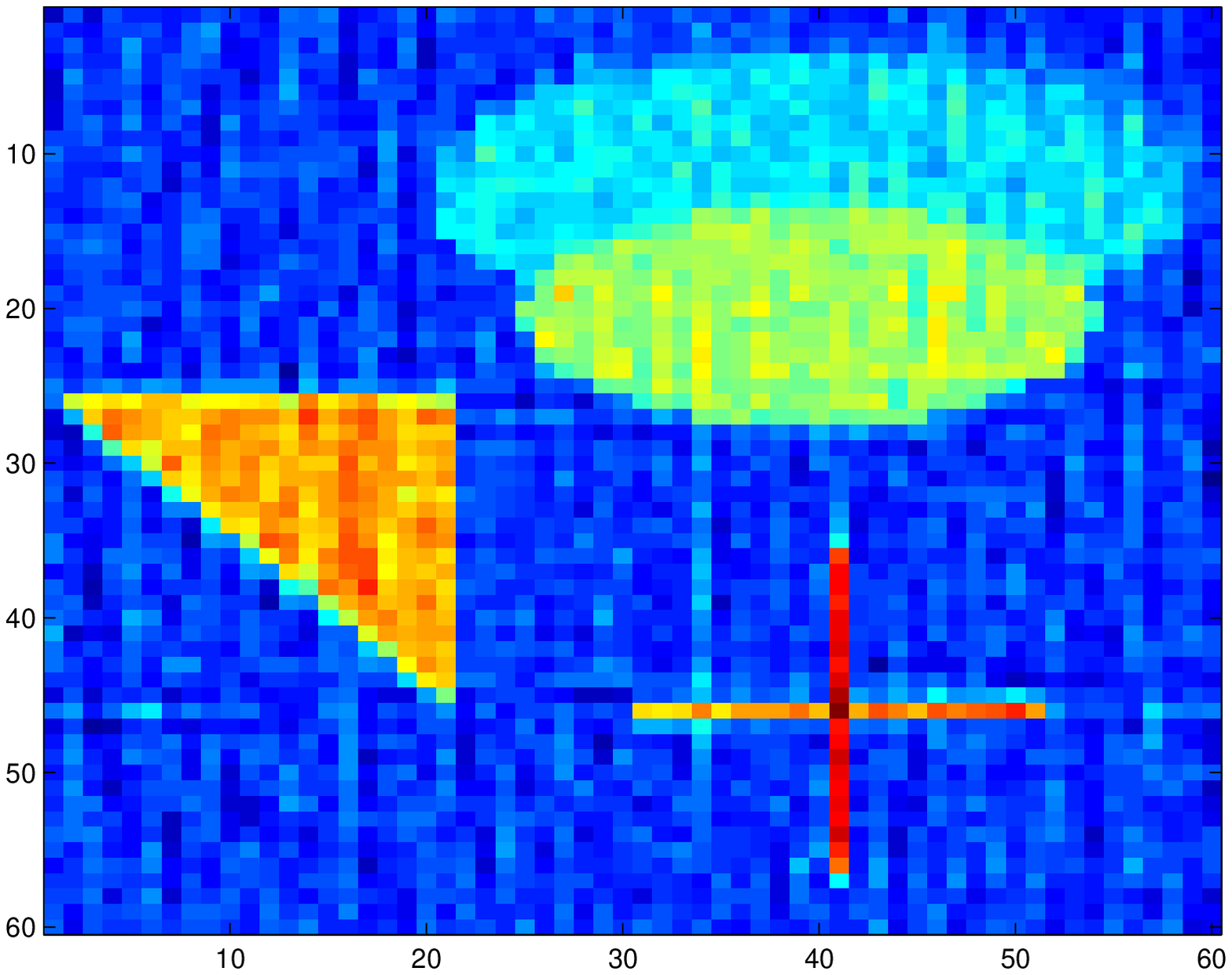}}
\subfigure[$1800(.332)$]{\includegraphics[width=.15\textwidth]{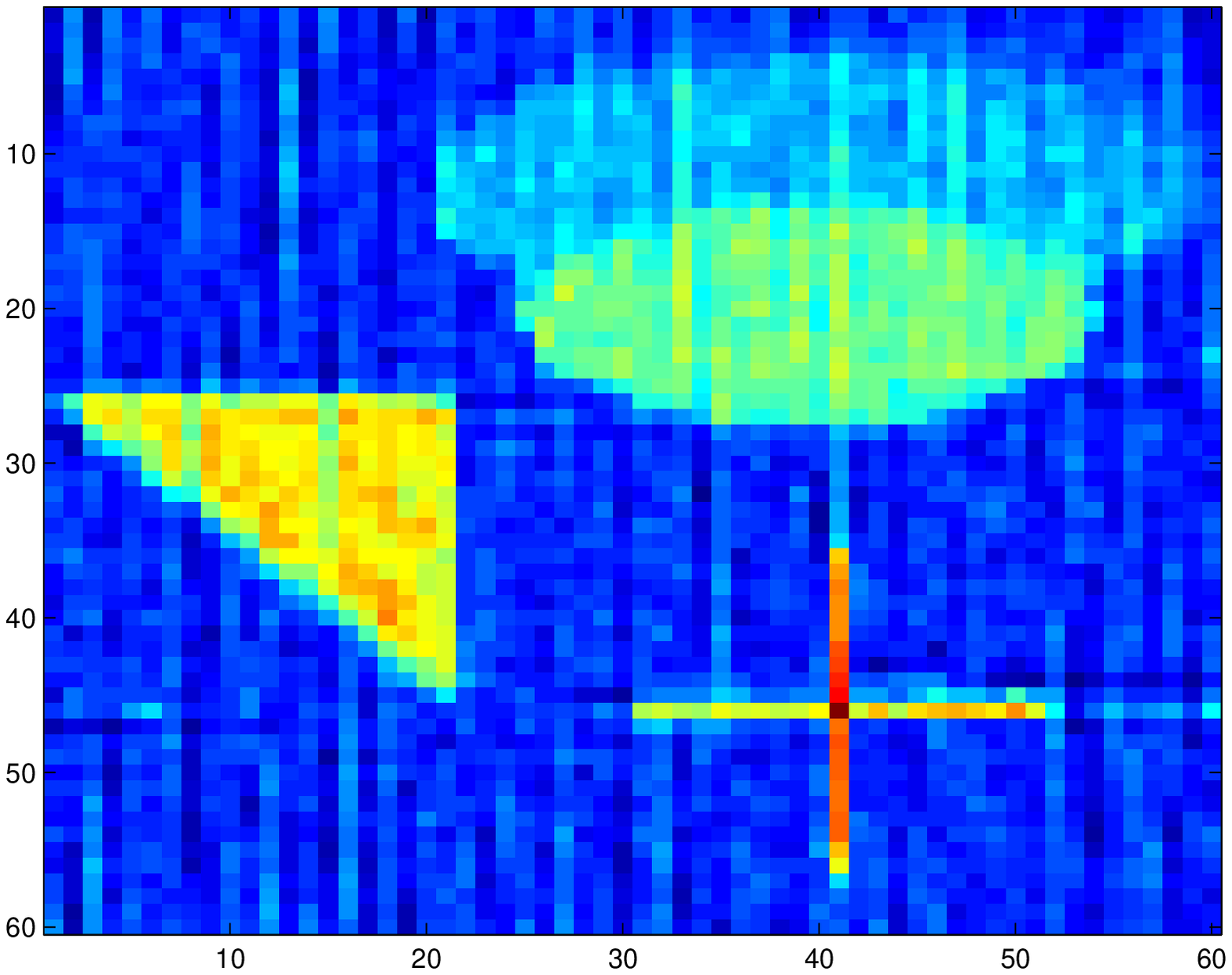}}\\
\subfigure[$3600(.302)$]{\includegraphics[width=.15\textwidth]{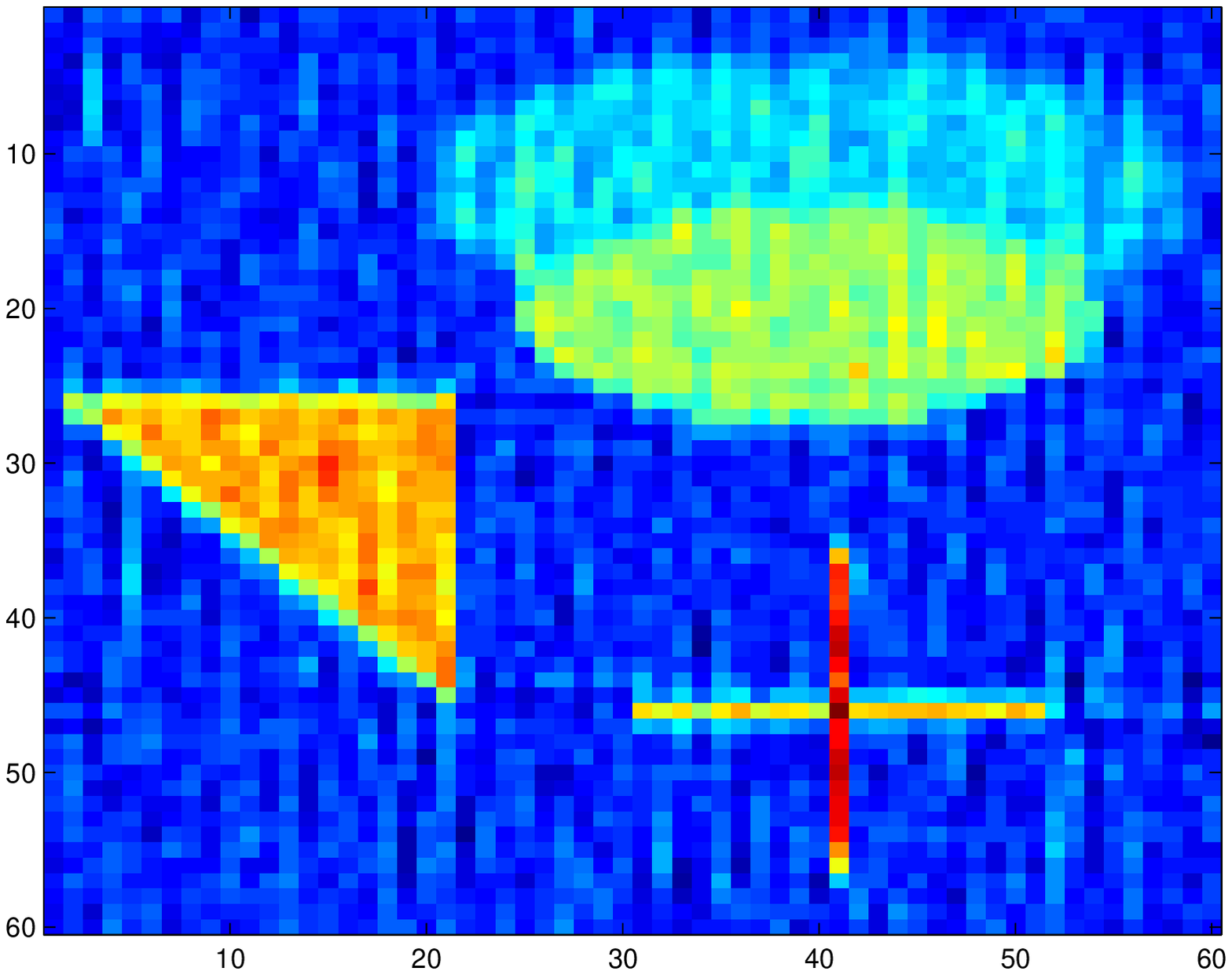}}
\subfigure[$2700(.328)$]{\includegraphics[width=.15\textwidth]{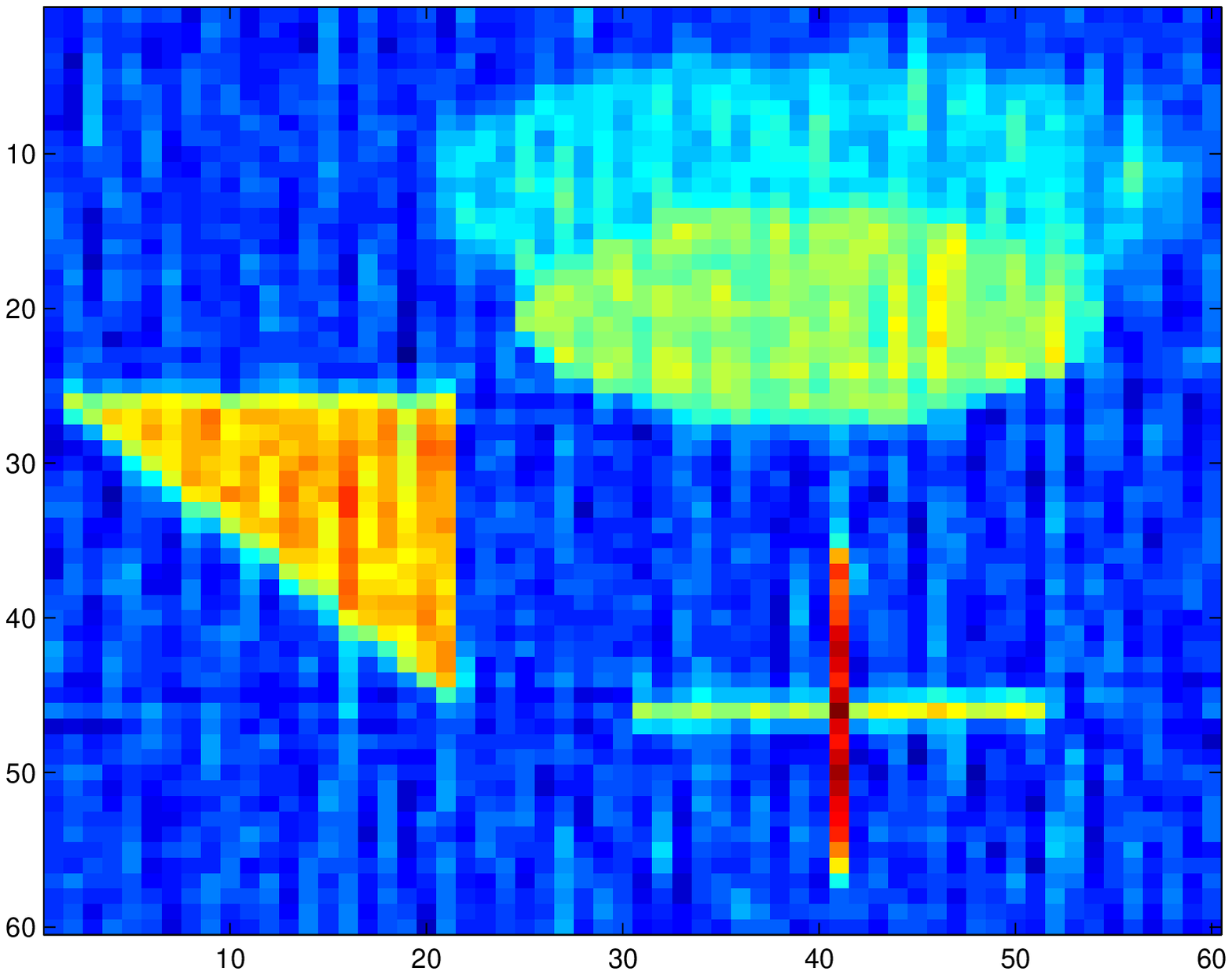}}
\subfigure[$1800(.380)$]{\includegraphics[width=.15\textwidth]{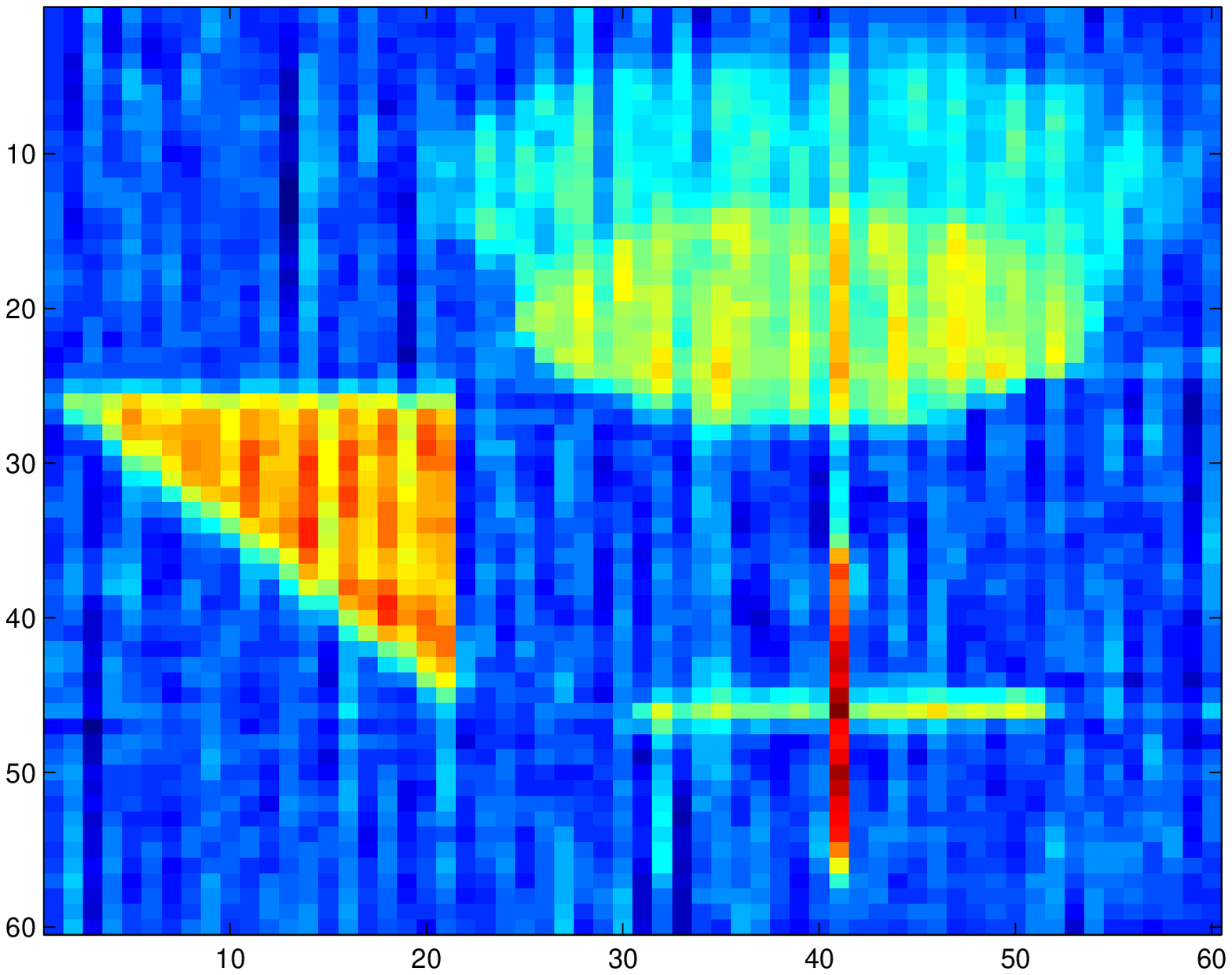}}
\subfigure[$3600(.230)$]{\includegraphics[width=.15\textwidth]{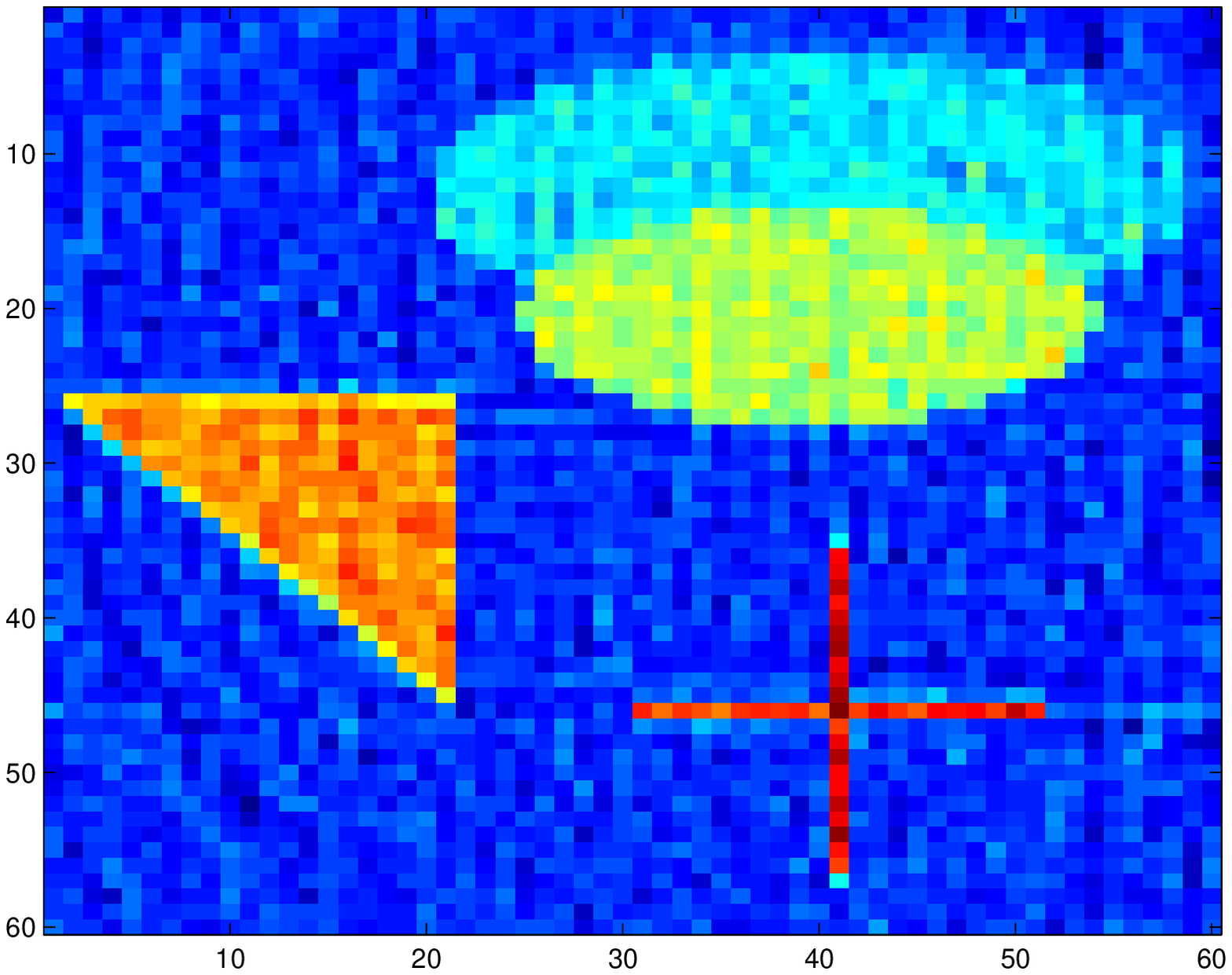}}
\subfigure[$2700(.278)$]{\includegraphics[width=.15\textwidth]{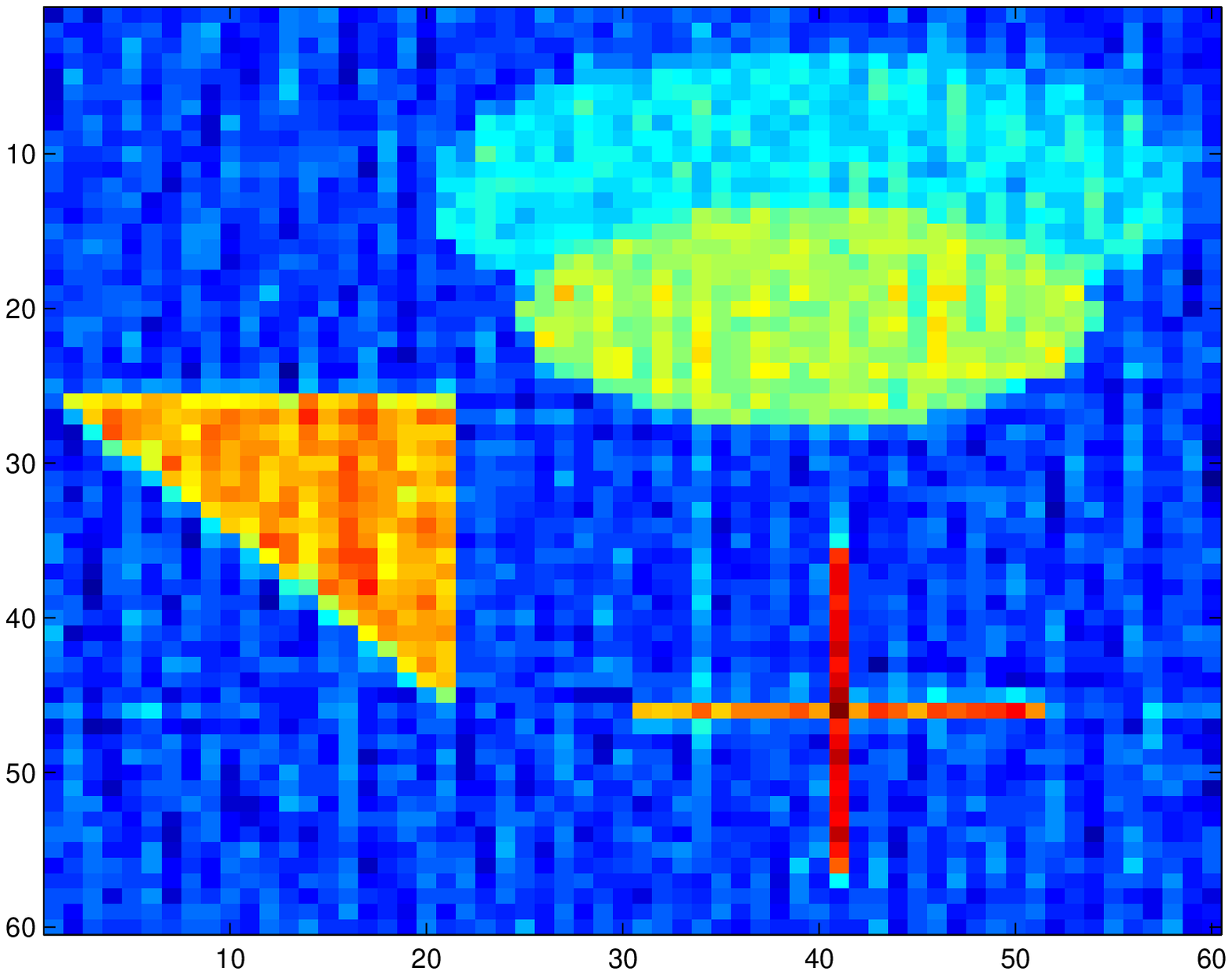}}
\subfigure[$1800(.384)$]{\includegraphics[width=.15\textwidth]{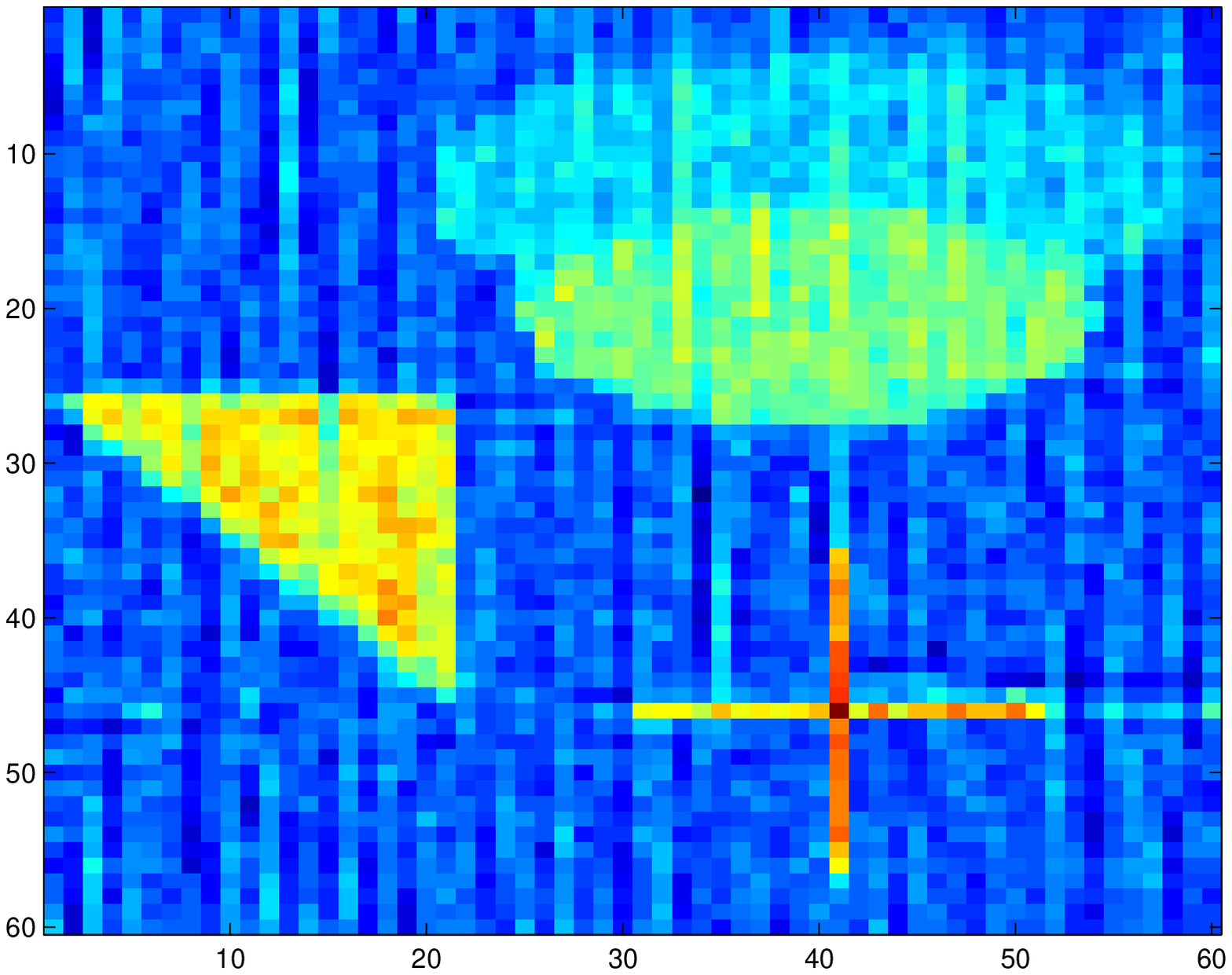}}\\
\subfigure[$3600(.297)$]{\includegraphics[width=.15\textwidth]{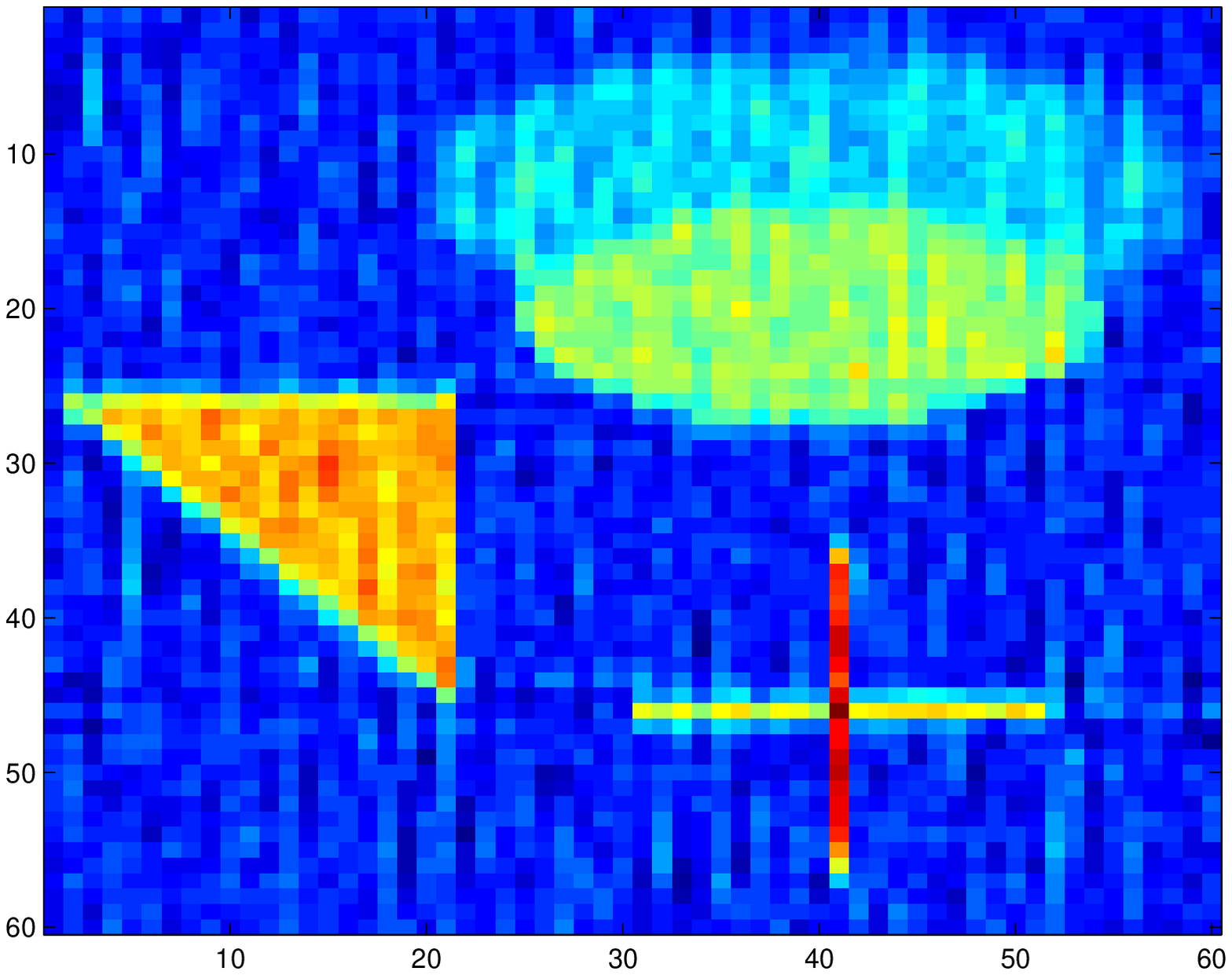}} 
\subfigure[$2700(.325)$]{\includegraphics[width=.15\textwidth]{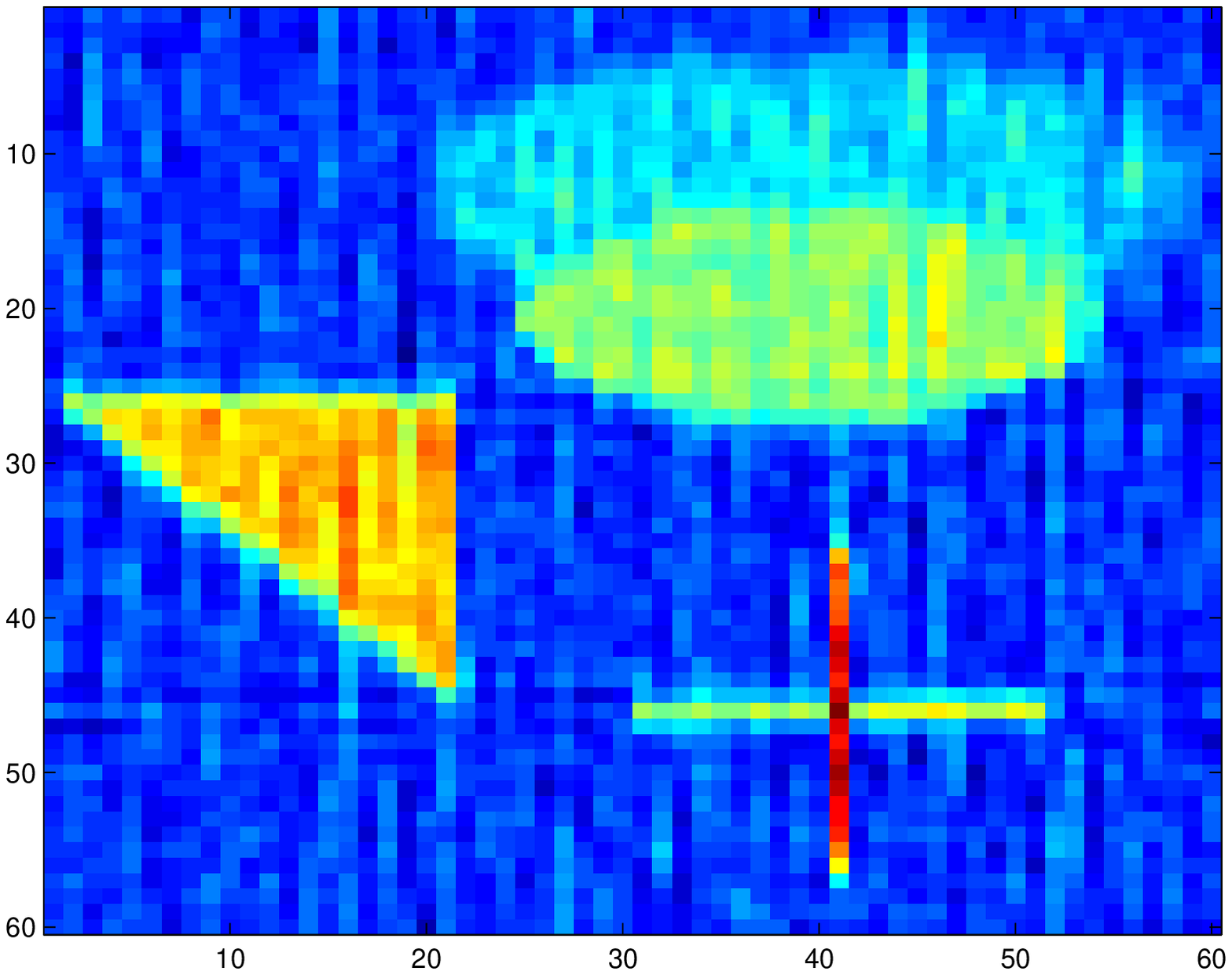}} 
\subfigure[$1800(.380)$]{\includegraphics[width=.15\textwidth]{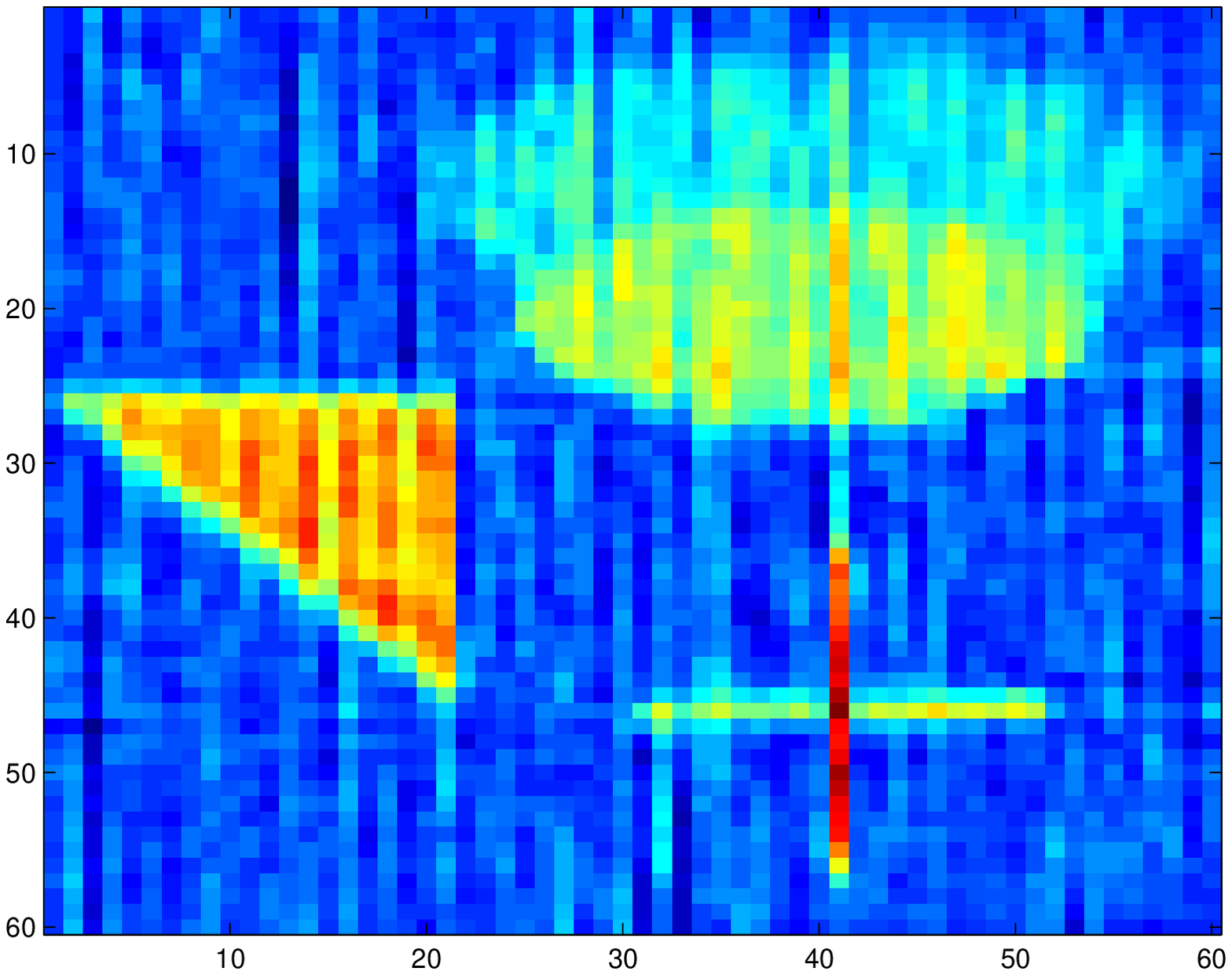}}
\subfigure[$3600(.222)$]{\includegraphics[width=.15\textwidth]{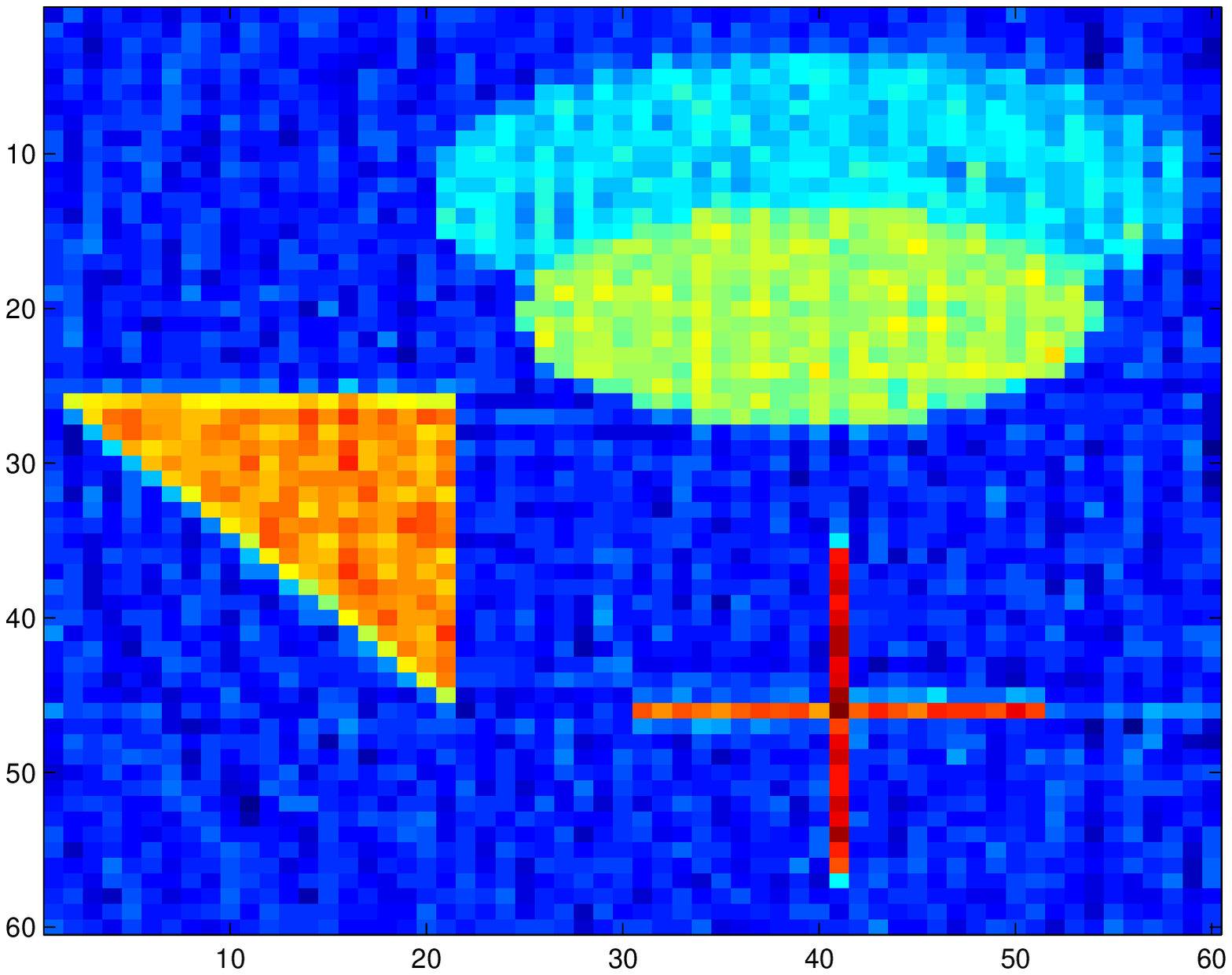}} 
\subfigure[$2700(.263)$]{\includegraphics[width=.15\textwidth]{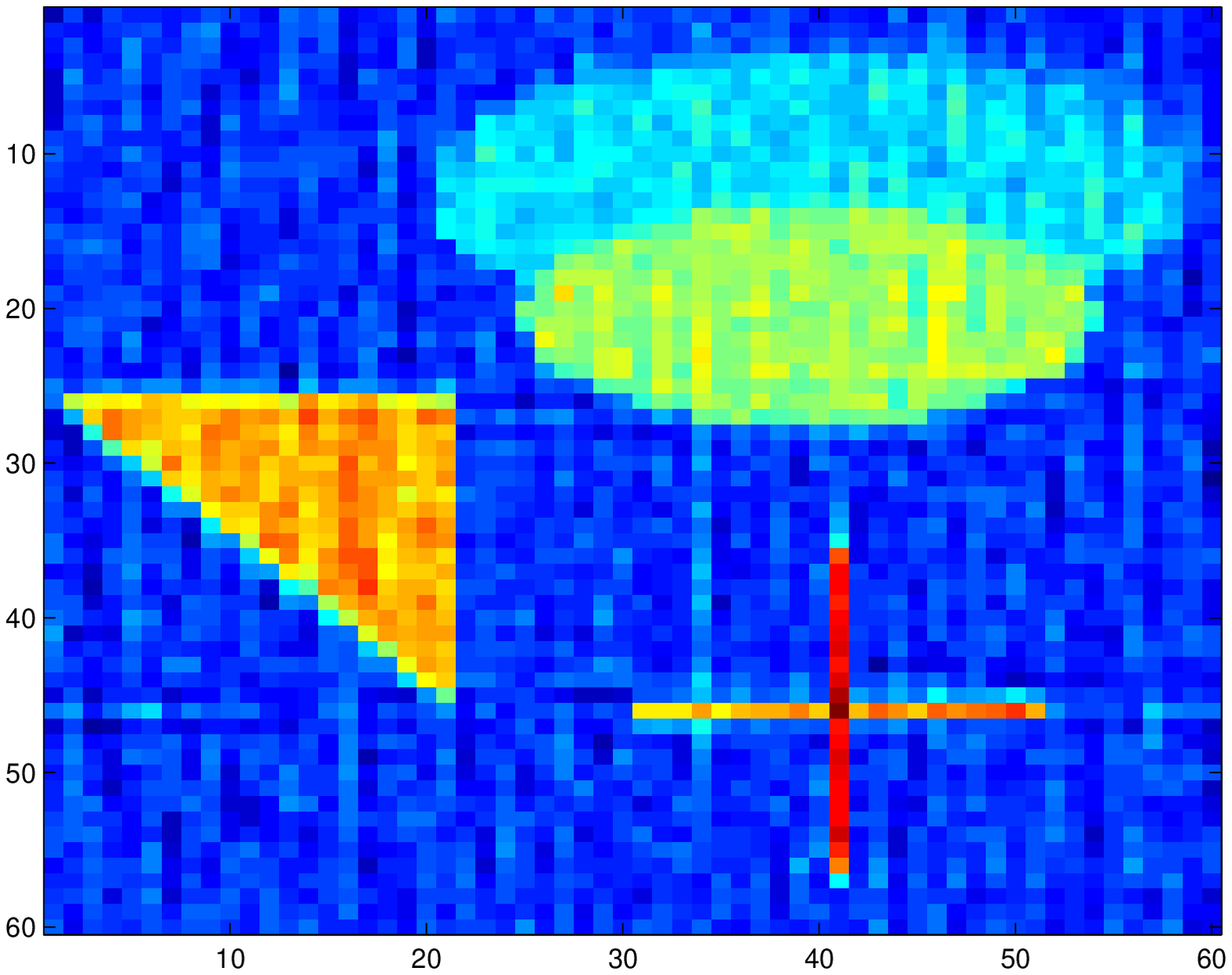}} 
\subfigure[$1800(.325)$]{\includegraphics[width=.15\textwidth]{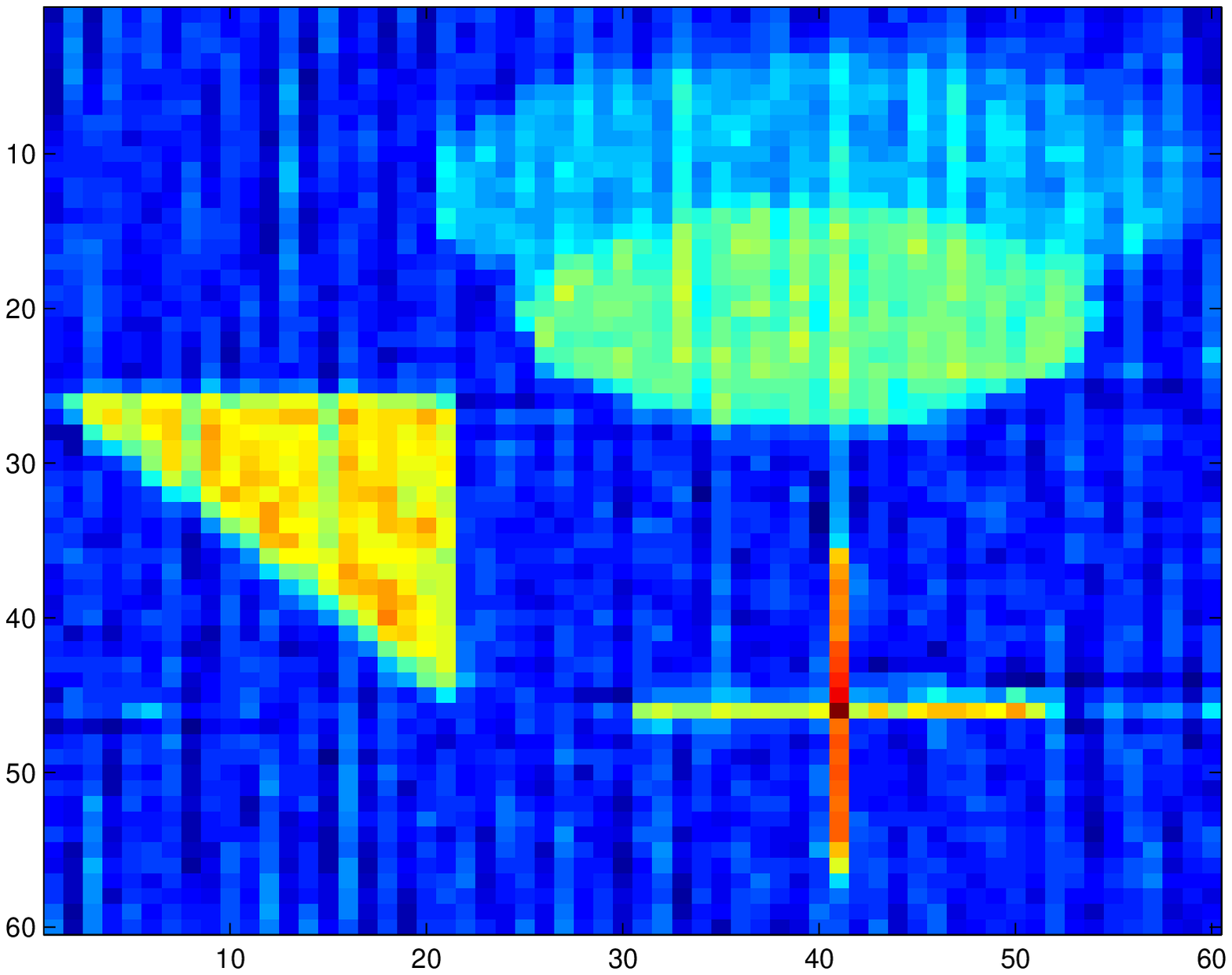}}
\end{center}\caption{Illustrative results in row $1$ for the UPRE, in row $2$ for the GCV and in row $3$ for the $\chi^2$ principle. From left to right problem size $3600$, $2700$ and $1800$, for noise level $.02$ and then $.01$. The label gives the 
the sample and the  relative error $m($error).\label{tomo1fig}}
\end{figure}

\section{Algorithmic Considerations for the Iterative MS stabilizer}\label{MSstab}
The results of Section~\ref{otherresults} demonstrate the relative success of regularization parameter estimation techniques, while also showing that in general with limited data sets improvements may be desirable. Here the iterative technique using the MS stabilizing operator which is frequently used for geophysical data inversion is considered. Its connection with the iteratively regularized norm algorithms, discussed in \cite{WoRo:07}, has apparently not been previously noted in the literature but does demonstrate the convergence of the iteration based on the 
 updating MS stabilizing operator $L$:
\begin{align}\label{MSL}
L^{(k)}&=(\mathrm{diag}((\bfm^{(k-1)}-\bfmo)^2)+\epsilon^2I)^{-1/2}.
\end{align}
Note $L^{(k)}$ is of size $n \times n$ for all $k$, and the use of small $\epsilon^2>0$ assures rank($L^{(k)})=n$, avoiding instability for the components converging to zero, $\bfm_j-(\bfmo)_j \rightarrow 0$. With this $L$ we see that $L=L(\bfm)$ and hence, in the notation of \cite{Zhd}  \eqref{objective2} is of pseudo-quadratic form and the iterative process is required. The iteration to find $\bfm^{(k)}$ from $\bfm^{(k-1)}$, as in \cite{Zhd}, replacing $L$ by $L^{(k)}:=L(\bfm^{(k-1)})$ transforms \eqref{objective2} to a standard Tikhonov regularization, equivalent to the IRN of \cite{WoRo:07}, which can be initialized with $\bfm^{(0)}=0$.    Theorem~\ref{newtheorem} can be  used with $m$ degrees of freedom.

The regularization parameter needed at each iteration can be found by applying any of the noted algorithms, e.g. UPRE, GCV, $\chi^2$, MDP, LC-curve,  at the $k^{\mathrm{th}}$ iteration, using the SVD calculated for the matrix $\tilde{G}(L^{(k)})^{-1}$, here noting that solving  the mapped right preconditioned system is equivalent to solving the original formulation, and avoids the GSVD. 
In particular  \eqref{objective}  in the MS approach is replaced by 
 \begin{align}\label{objectiveMS}
P^{\sigmam}(\bfm)= \|G \bfm-\bfdo\|_{\Cd^{-1}}^2 + (\alpha^{(k)})^2\|L^{(k)}(\bfm-\bfm^{(k-1)})\|,
\end{align}
with $L^{(k)}$ given by \eqref{MSL}, and 
$\{\alpha^{(k)}\}$ found automatically. 
In our experiments for the $2D$ gravity model, we contrast the use of an initial zero estimate of the density with  an initial estimate for $\bfm_0$  obtained from the data and based on the generalized singular values for which the  central form of the $\chi^2$ iteration is better justified.  For the case with prior information, the initial choice for $\alpha^{(1)}$ is picked without consideration of regularization parameter estimation, and  the measures for convergence are for the iterated solutions $\bfm^{(k)}$, $k\ge1$. 
\subsection{Numerical Results: A $2D$ Gravity Model}\label{results}
We contrasted the use of the regularization parameter estimations techniques on an underdetermined  $2D$ gravity model. Figure~\ref{Model}-\ref{Anomaly} shows this model and  its gravity value.
\begin{figure}[!h] 
\begin{center}
\subfigure[Original Model]{\label{Model}\includegraphics[width=.45\textwidth]{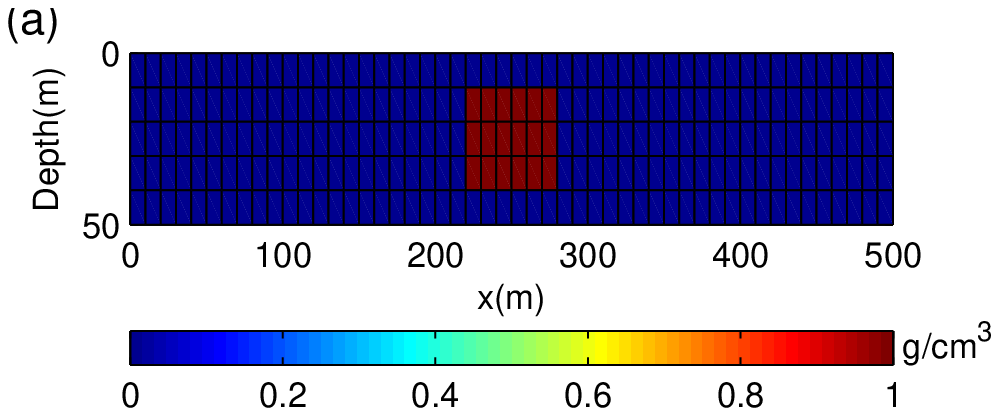}}
\subfigure[Gravity Anomaly]{\label{Anomaly}\includegraphics[width=.45\textwidth]{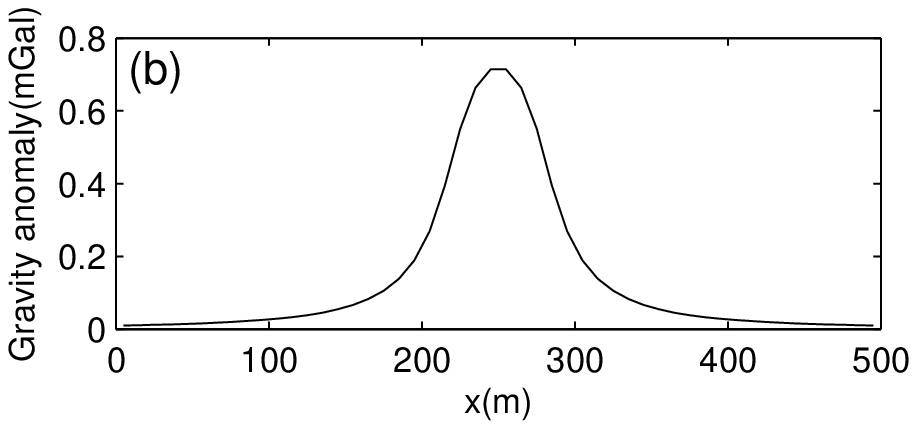}}
\caption{\label{fig1} (a) Model of a  body set in a grid of  square cells each of size $10$m, the density contrast of the body is $1\mathrm{gr}/\mathrm{cm}^3$. (b) The gravity anomaly due to the synthetic model.}
\end{center}
\end{figure} 
The synthetic model is a rectangular body, $60\mathrm{m}\times 30\mathrm{m}$, that has density contrast $1\mathrm{gr}/\mathrm{cm}^3$ with an homogeneous background. Simulation data are calculated at $50$ stations with $10\mathrm{m}$ spacing on the surface. The subsurface is divided into $50\times 5$ cells with $10\mathrm{m}\times 10\mathrm{m}$ dimension, hence in this case $m=50$ and $n=250$. In generating noise-contaminated data we generate a random matrix $\Theta$ of size $m\times 50$, with columns $\Theta^c$, $c=1:50$, using the MATLAB function $\bf{randn}$. Then setting $\bfd^{c}=\bfd+\left(\eta_1\left(\bfd_{\mathrm{exact}}\right)_i+\eta_2\|\bfd_{\mathrm{exact}}\|\right)\Theta^c$, generates $50$ copies of the right-hand vector $\bfd$. Results of this $20\%$ under sampling are presented for $3$ noise levels, namely ($\eta_1=0.01,\eta_2=0.001$;$\eta_1=0.03,\eta_2=0.005$ and $\eta_1=0.05,\eta_2=0.01$).

In the experiments we contrast not only the GCV, UPRE and $\chi^2$ methods, but also the MDP and L-curve which are the standard techniques in the related geophysics literature. Details are as follows:
\begin{description}
\item[Depth Weighting Matrix]  Potential field inversion  requires the inclusion of a depth weighting matrix within the regularization term. We use $\beta=0.6$ in the diagonal matrix $(W_{\mathrm{depth}})_{jj}=z_j^{-\beta}$  for cell $j$ at depth $z_j$, and  at each step form the column scaling through $\tilde{G}^{(k)}=\tilde{G}(W_{\mathrm{depth}})^{-1}(L^{(k)})^{-1}$, where in $L^{(k)}$ $\epsilon=.02$. 
\item[Initialization] 
 In all cases the inversion is initialized with a $\bfm^{(0)}=\bfm_0$ which is obtained as the solution of the regularized problem, with depth weighting, and found for   the  fixed choice  $\alpha^{(0)} = (n/m)\max(\gamma_i)/\mathrm{mean}(\gamma_i)$, using the singular values of the weighted $\tilde{G}$. All subsequent iterations calculate $\alpha^{(k)}$ using the chosen regularization parameter estimation technique. 
 \item[Stopping Criteria] The algorithm terminates for $k=k_{\mathrm{final}}$ when one of the following conditions is met, with $\tau=.01$, 
\subitem(i) a sufficient decrease in the functional is observed, $P^{\alpha^{(k-1)}}-P^{\alpha^{(k)}}< \tau(1+P^{\alpha^{(k)}})$, 
\subitem(ii)  the change in the density satisfies $\|\bfm^{(k-1)}-\bfm^{(k)} \| < \sqrt{\tau} (1 + \|\bfm^{(k)}\|)$,  
\subitem(iii)  a maximum number of iterations, $K$, is reached, here $K=20$.
\item[Bound Constraints] Based on practical knowledge the density is constrained to lie between $[0,1]$ and any values outside the interval are projected to the closest bound. 

\item[$\chi^2$ algorithm]  The Newton algorithm used for the  $\chi^2$ algorithm is iterated to tolerance determined by a confidence interval $\theta=.95$ in \eqref{rangeP}, dependent on the number of degrees of freedom, corresponding to $0.6271$ for $50$ degrees of  freedom. The  maximum degrees of freedom is adjusted dynamically dependent on the number of significant singular values, with the tolerance adjusted at the same time. We note that the number of degrees of freedom often drops with the iteration and thus the tolerance increases, but that the difference in results with choosing lower tolerance $\theta=.90$, leads to almost negligible change in the results. 
\item[Exploring $\alpha$] At each step for the L-curve, MDP, GCV and UPRE, the solution is found at each iteration for $1000$ choices of $\alpha$ over a range dictated by the current singular values, see the discussion for the L-cuve in e.g. \cite{ABT, Regtools}. 
\item[MDP algorithm] To find $\alpha$ by the MDP we interpolate $\alpha$ against the  weighted residual for $1000$ values for $\alpha$ and use the Matlab function \texttt{interp1} to find the $\alpha$ which solves for the degrees of freedom. Here we use $\delta=m$ so as to avoid the complication in the comparison of how to scale $M$, i.e. we use $\rho=1$. 
\end{description}
  Tables~\ref{tab1}-\ref{tab3}  contrast the performance of the $\chi^2$ discrepancy, MDP, LC, GCV and UPRE methods with respect to relative error, $ {\|\left(\bfm_{\mathrm{exact}}-\bfm^{(K)}\right)\|_2}/{\|\bfm_{\mathrm{exact}}\|_2}$, and the average regularization parameter calculated at the final iteration.   We also record the average number of iterations required to convergence. In Table~\ref{tab1} we also give the relative errors after one just one iteration of the MS and with the zero initial condition. 
  \begin{table}[!h]
 \begin{center}
\caption{Mean and standard deviation of the relative error measured in the $2-$norm with respect to the known solution over 50 runs. Again the \textit{best} results in each case are indicated by the boldface entries.}\label{tab1}
\vspace{0.25cm}
\begin{tabular}{|c|c|c|c|c|c|}\hline
&  \multicolumn{5}{c|}{Method}  \\ \hline 
Noise &UPRE  &GCV    &$\chi^2$  &MDP  &LC  \\  \hline  
$\eta_1 ,\eta_2$ &  \multicolumn{5}{c|}{Results after just one step, non zero initial condition} \\ \hline
$0.01,0.001$ &$.331(.008)$&$\textbf{.325(.008)}$&$\textbf{.325(.008)}$&$.355(.009)$&$.447(.055)$  \\
$0.03,0.005$ &$\textbf{.353(.019)}$&$.354(.042)$&$.361(.020)$&$.418(.025)$&$.374(.052)$\\
$0.05,0.01$   &$\textbf{.392(.034)}$&$.409(.062)$&$.416(.040)$&$.478(.043)$&$.463(.067)$\\ \hline
&  \multicolumn{5}{c|}{Results using the non zero initial condition} \\ \hline
$0.01,0.001$ &$.323( .009)$&$.314(.010)$&$\textbf{.317(.009)}$&$.352(.011)$&$.489(.078)$   \\
$0.03,0.005$ &$\textbf{.339(.022)}$&$ .338(.040)$&$.359(.022 )$&$.413(.026)$&$.369(.053)$  \\
$0.05,0.01$   &$\textbf{.374(.041)}$&$.393(.068)$&$.414(.041)$&$.470(.046)$&$.460(.070)$  \\ \hline
&  \multicolumn{5}{c|}{Results using the initial condition $\bfm_0=0$} \\ \hline
$0.01,0.001$ &$.322(.001)$&$.312(.011)$&$\textbf{.315(.001)}$&$.359(.009)$&$.593(.014)$\\
$0.03,0.005$ &$\textbf{.333(.020)}$&$.334(.037)$&$.352(.021)$&$.425(.026)$&$.451(.067)$\\
$0.05,0.01$   &$\textbf{.357(.030)}$&$.440(.086)$&$.388(.034)$&$.477(.046)$&$.487(.082)$\\ \hline
\end{tabular}
\end{center}
\end{table}
 
%
\begin{table}[!h]
\begin{center}
\caption{Mean and standard deviation of $\alpha^{k_{\mathrm{final}}}$ over $50$ runs.}\label{tab2}
\vspace{0.25cm}
\begin{tabular}{|c|c|c|c|c|c|}\hline
Noise &  \multicolumn{5}{c|}{Method}  \\ \hline
$\eta_1 ,\eta_2$ &UPRE  &GCV    &$\chi^2$  &MDP  &LC  \\  \hline  
$0.01 ,0.001$ &$35.49(5.17)$&$14.93(9.85)$&$91.32(30.56)$&$53.42(7.43)\,\,$&$3.83(1.91)$ \\
$0.03 ,0.005$ &$11.02(2.62)$&$\,\, 4.18(2.58)$&$72.71(32.72 )$&$30.90(8.21)\,\,$&$0.86(0.08)$ \\
$0.05 ,0.01$   &$\,\, 7.64(4.35)$&$\,\, 3.28(3.05)$&$89.79(27.97)$&$25.81(14.87)$&$0.46(0.05)$\\ \hline
\end{tabular}
\end{center}
\end{table}
\begin{table}[!h]
\begin{center}
\caption{Mean and standard deviation of the number of iterations $k_{\mathrm{final}}$ to meet the convergence criteria over $50$ runs. Again the \textit{best} results in each case are indicated by the boldface entries.}\label{tab3}
\vspace{0.25cm}
\begin{tabular}{|c|c|c|c|c|c|}\hline
Noise &  \multicolumn{5}{c|}{Method}  \\ \hline
$\eta_1 ,\eta_2$ &UPRE  &GCV    &$\chi^2$  &MDP  &LC  \\  \hline  
$0.01 ,0.001$ &$18.94( 0.31)$&$14.78(5.83)$&$16.26(3.00 )$&$18.32(1.10)$&$ \textbf{6.30(1.64)}$   \\
$0.03 ,0.005$ &$11.90( 2.76)$&$\,\, 9.22(2.86)$&$ \,\, \textbf{5.50(1.39)}$ &$\,\, 7.68(1.80)$&$7.90(2.48)$ \\
$0.05 ,0.01$  & $\,\, 7.82(1.73)$   &$\,\, 8.22(2.41)$&$\,\, \textbf{5.10(0.58)}$ &$\,\, 5.72( 0.97)$&$7.84(2.41)$ \\ \hline
\end{tabular}
\end{center}
\end{table}
  \begin{figure}[!h] 
\begin{center}
\subfigure[Initial Gravity]{\label{IGA}\includegraphics[height=.07\textheight,width=.45\textwidth]{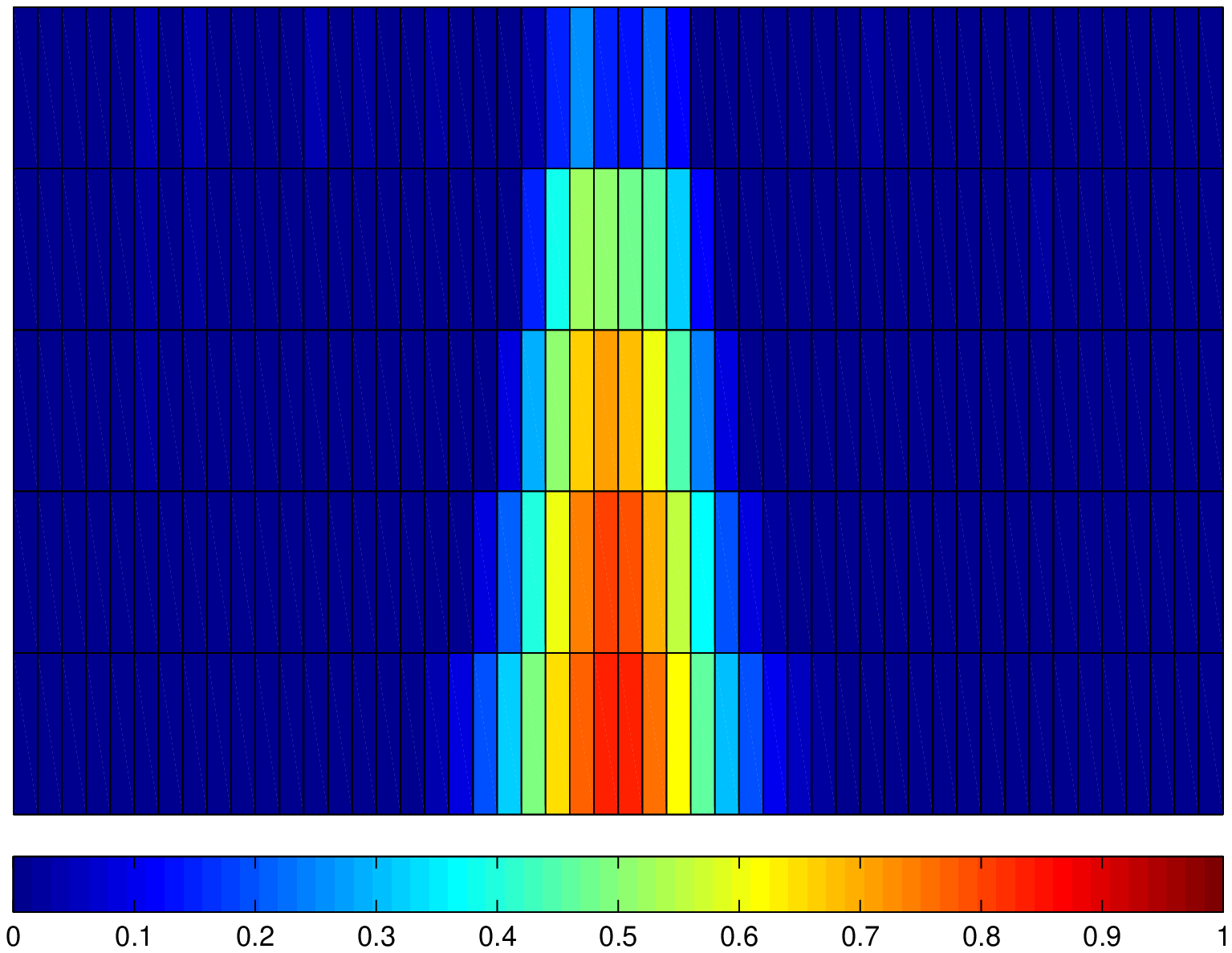}}
\subfigure[Initial Gravity]{\label{IGB}\includegraphics[height=.07\textheight,width=.45\textwidth]{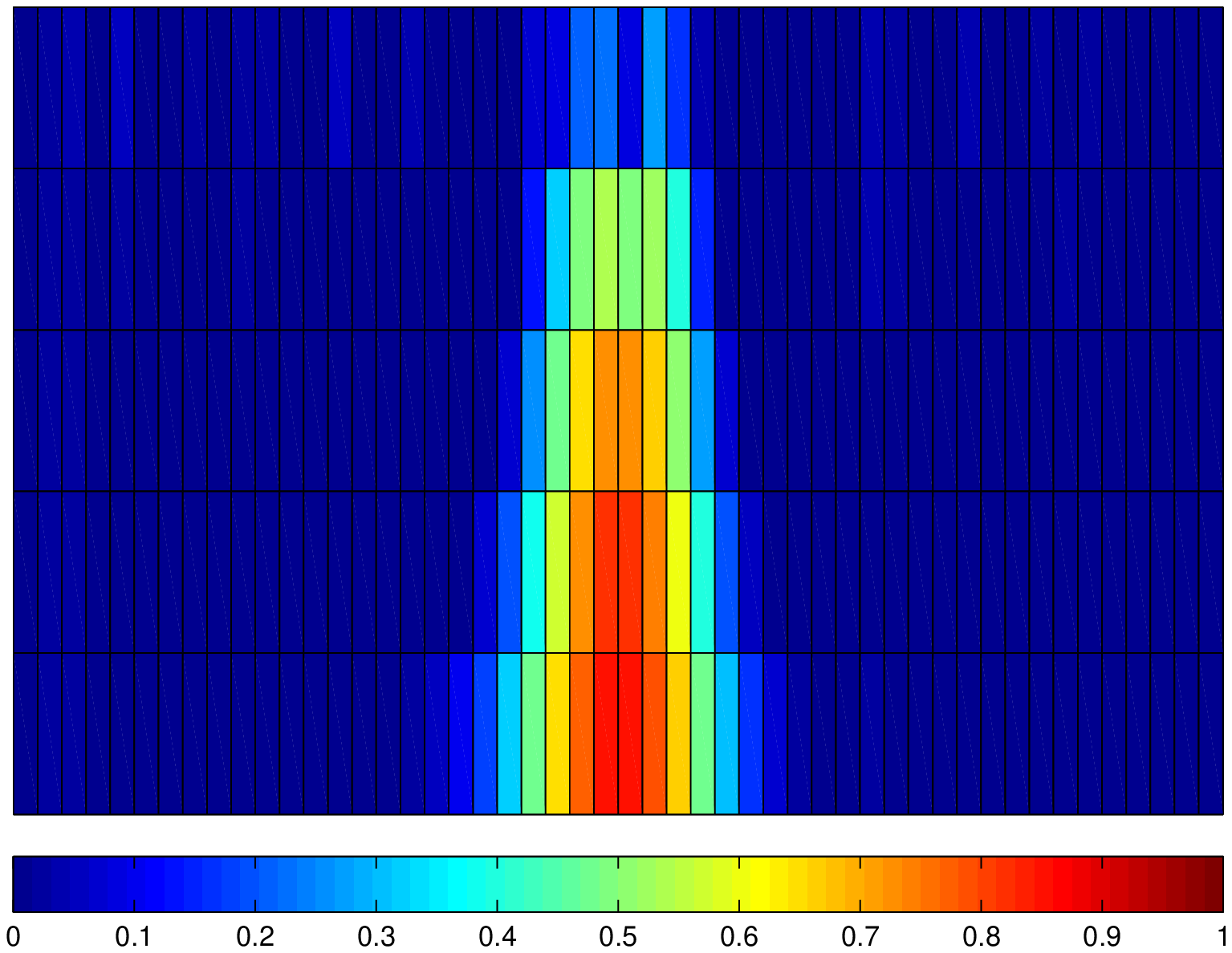}}
\subfigure[UPRE: error $.3181$]{\label{UPa}\includegraphics[height=.07\textheight,width=.45\textwidth]{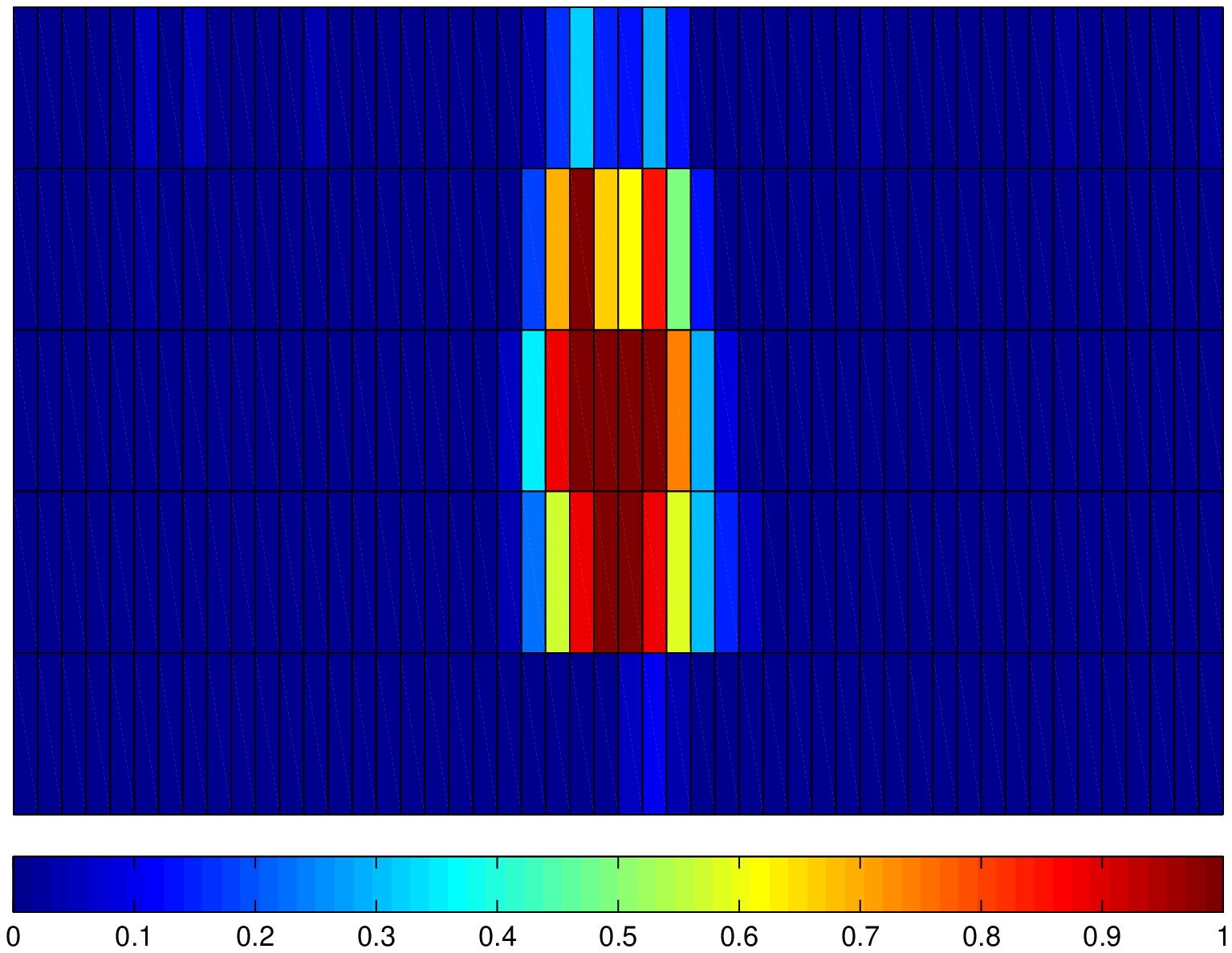}}
\subfigure[UPRE: error $.3122$]{\label{UPb}\includegraphics[height=.07\textheight,width=.45\textwidth]{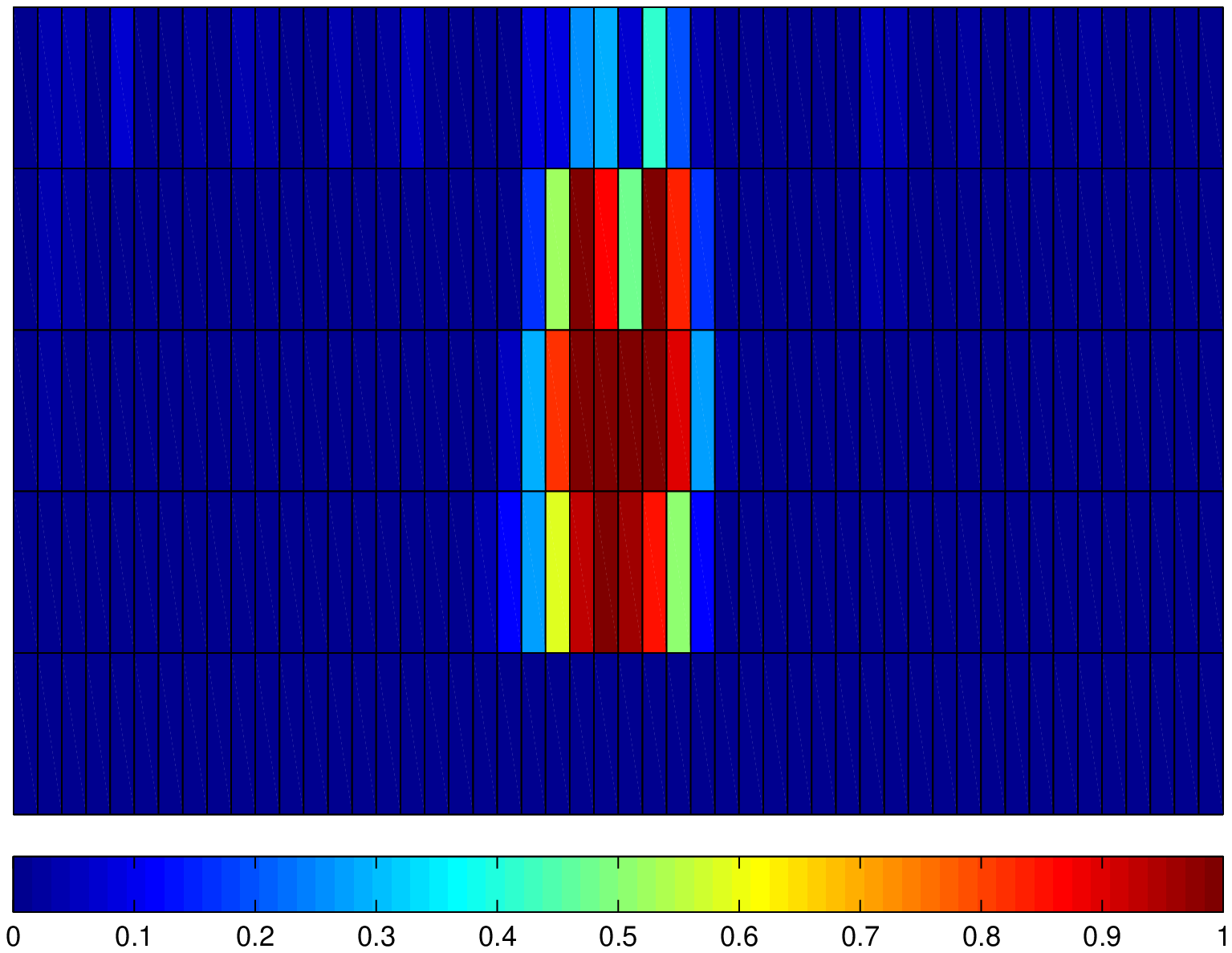}}
\subfigure[GCV: error $.3196$]{\label{GCVa}\includegraphics[height=.07\textheight,width=.45\textwidth]{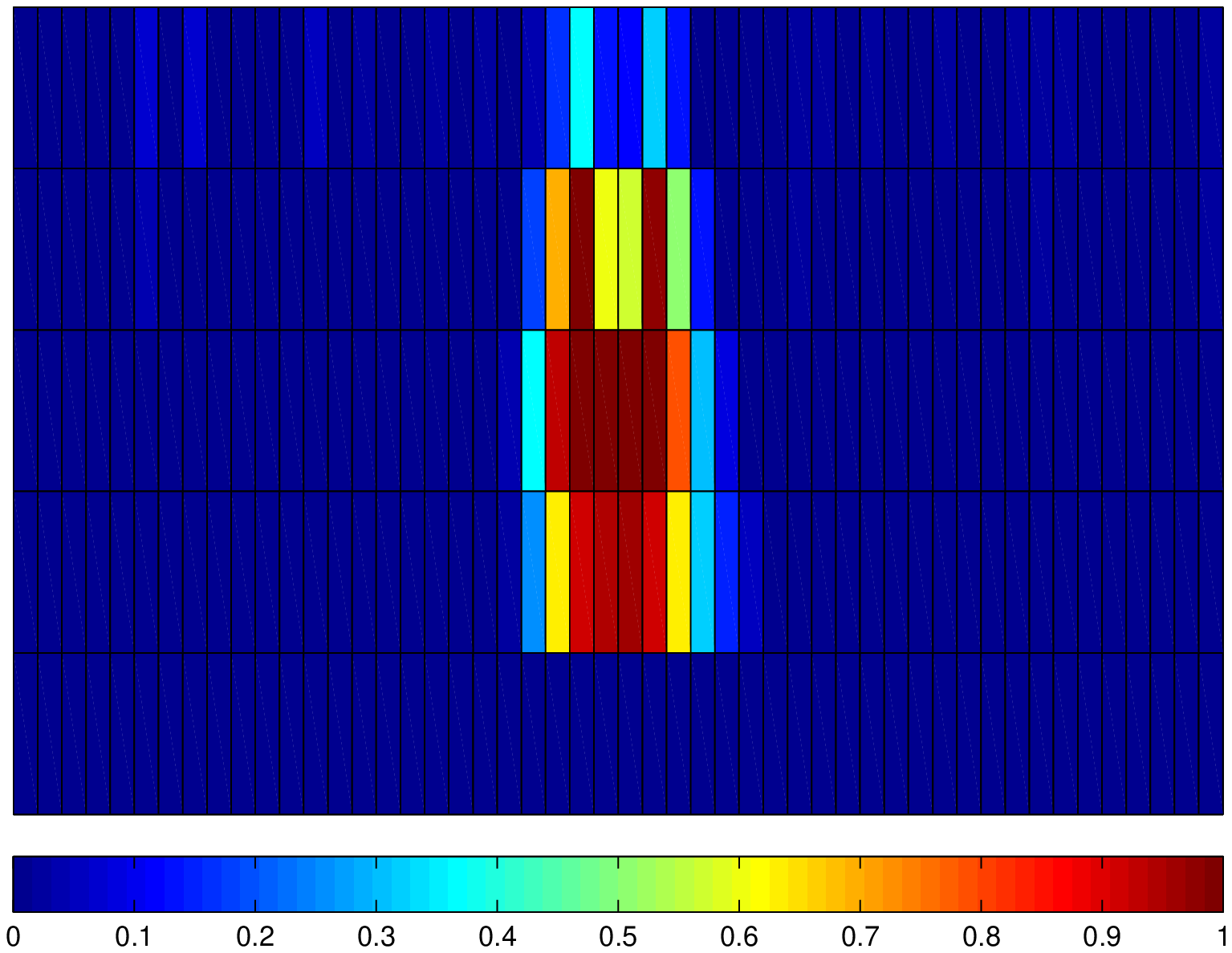}}
\subfigure[GCV: error $.3747$]{\label{GCVb}\includegraphics[height=.07\textheight,width=.45\textwidth]{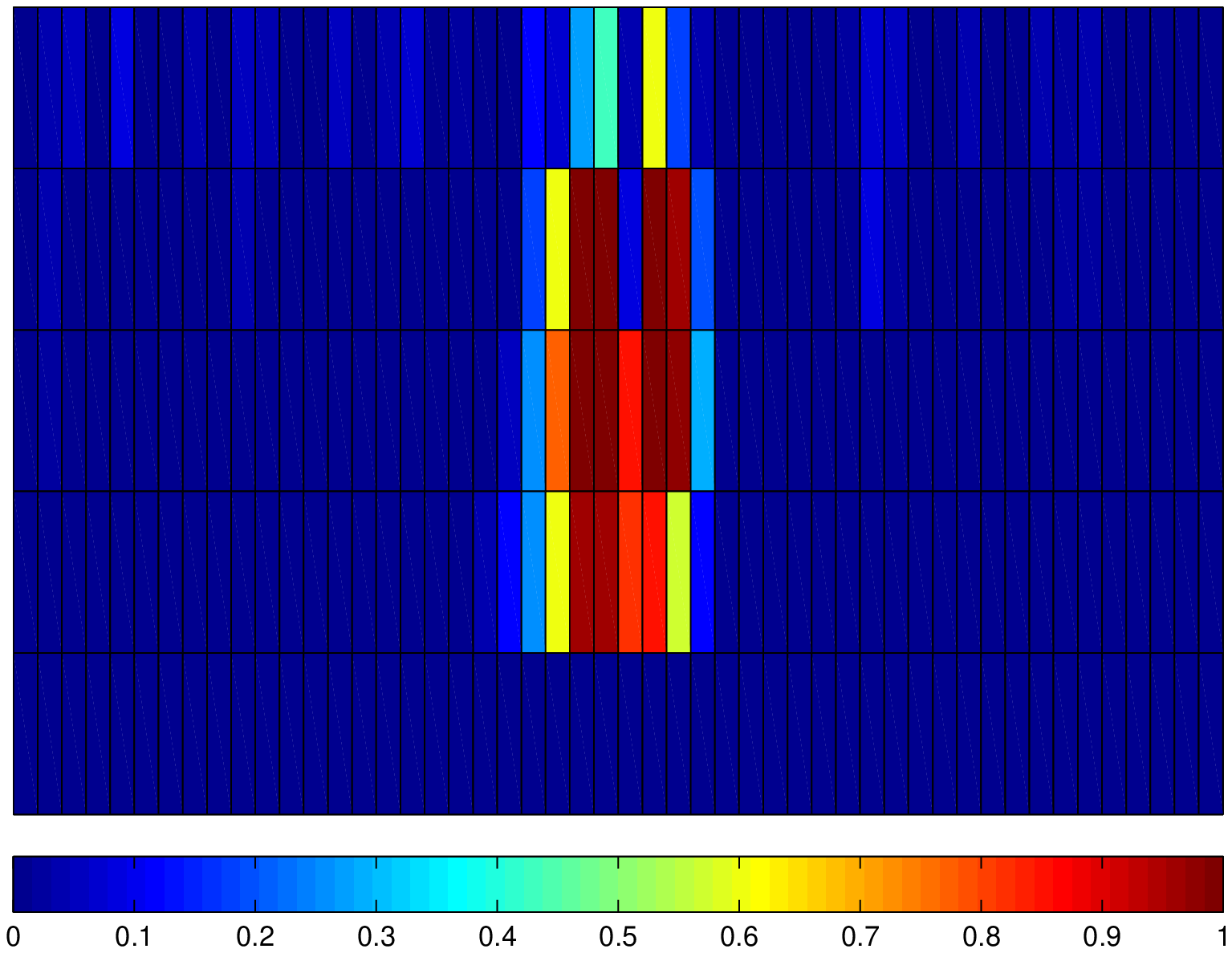}}
\subfigure[$\chi^2$: error $.3381$]{\label{Chia}\includegraphics[height=.07\textheight,width=.45\textwidth]{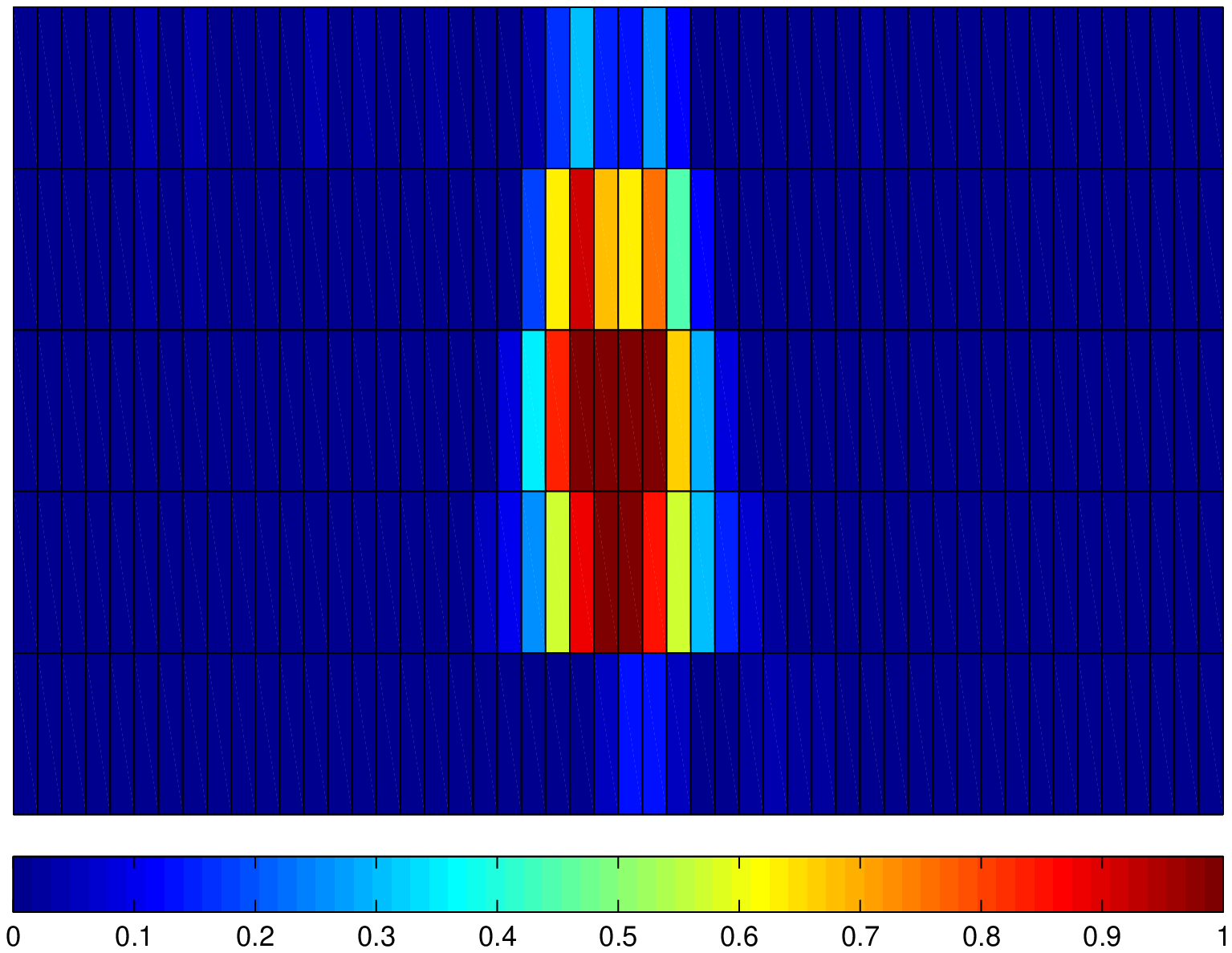}}
\subfigure[$\chi^2$: error $.3154$]{\label{Chib}\includegraphics[height=.07\textheight,width=.45\textwidth]{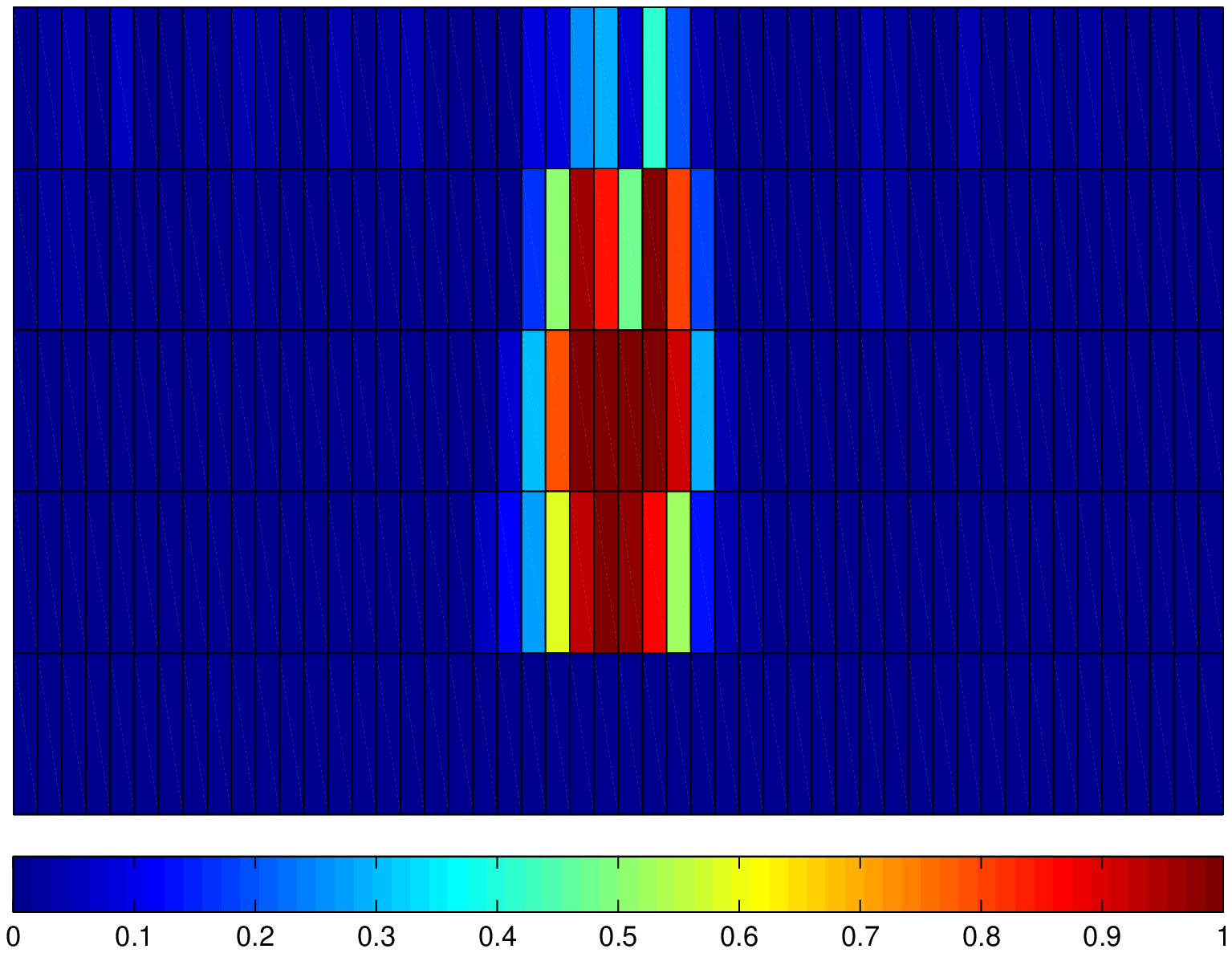}}
\subfigure[MDP: error $.3351$ ]{\label{MDPa}\includegraphics[height=.07\textheight,width=.45\textwidth]{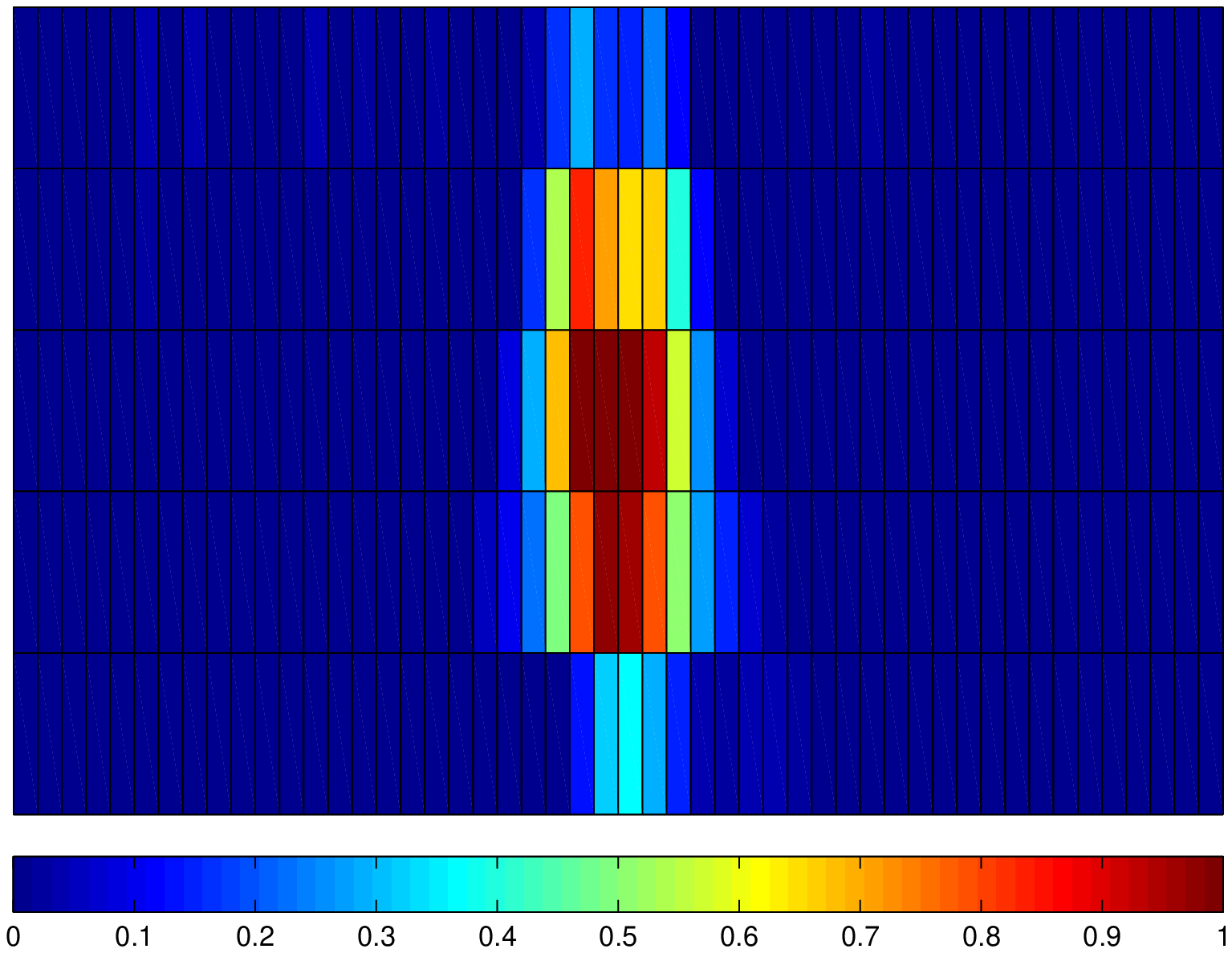}}
\subfigure[MDP: error $.3930$]{\label{MDPb}\includegraphics[height=.07\textheight,width=.45\textwidth]{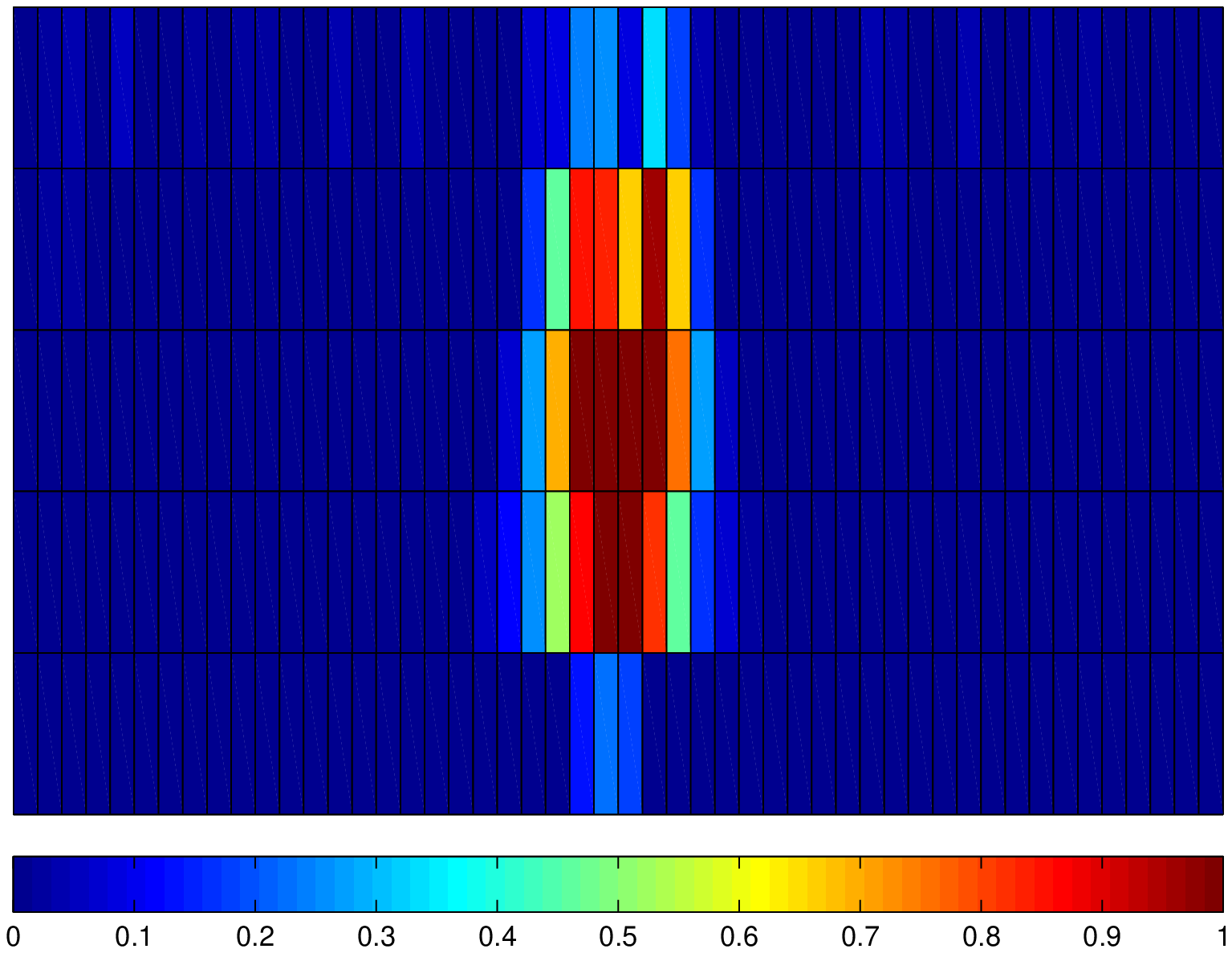}}
\subfigure[LC: error $.3328$]{\label{LCa}\includegraphics[height=.07\textheight,width=.45\textwidth]{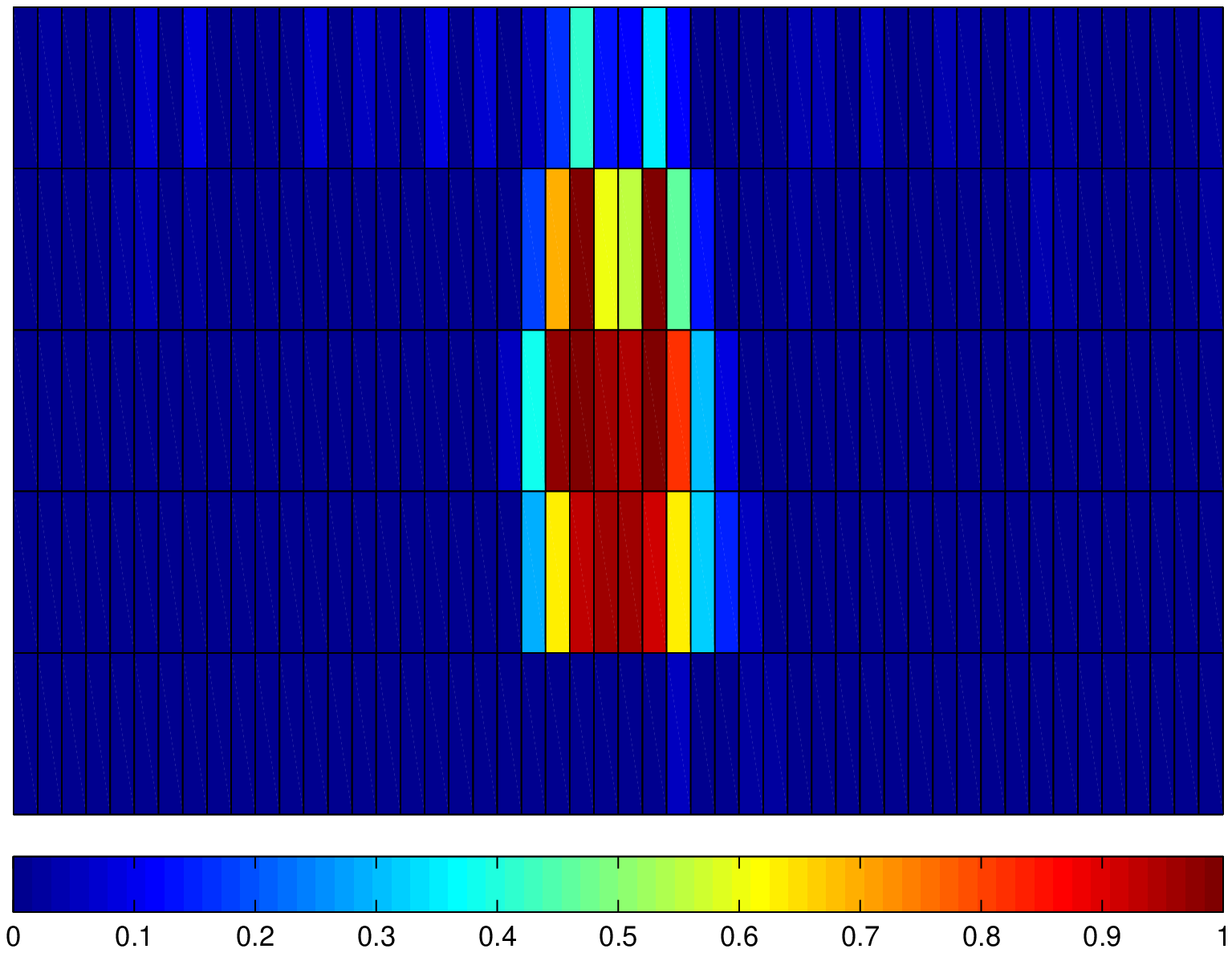}}
\subfigure[LC: error $.4032$]{\label{LCb}\includegraphics[height=.07\textheight,width=.45\textwidth]{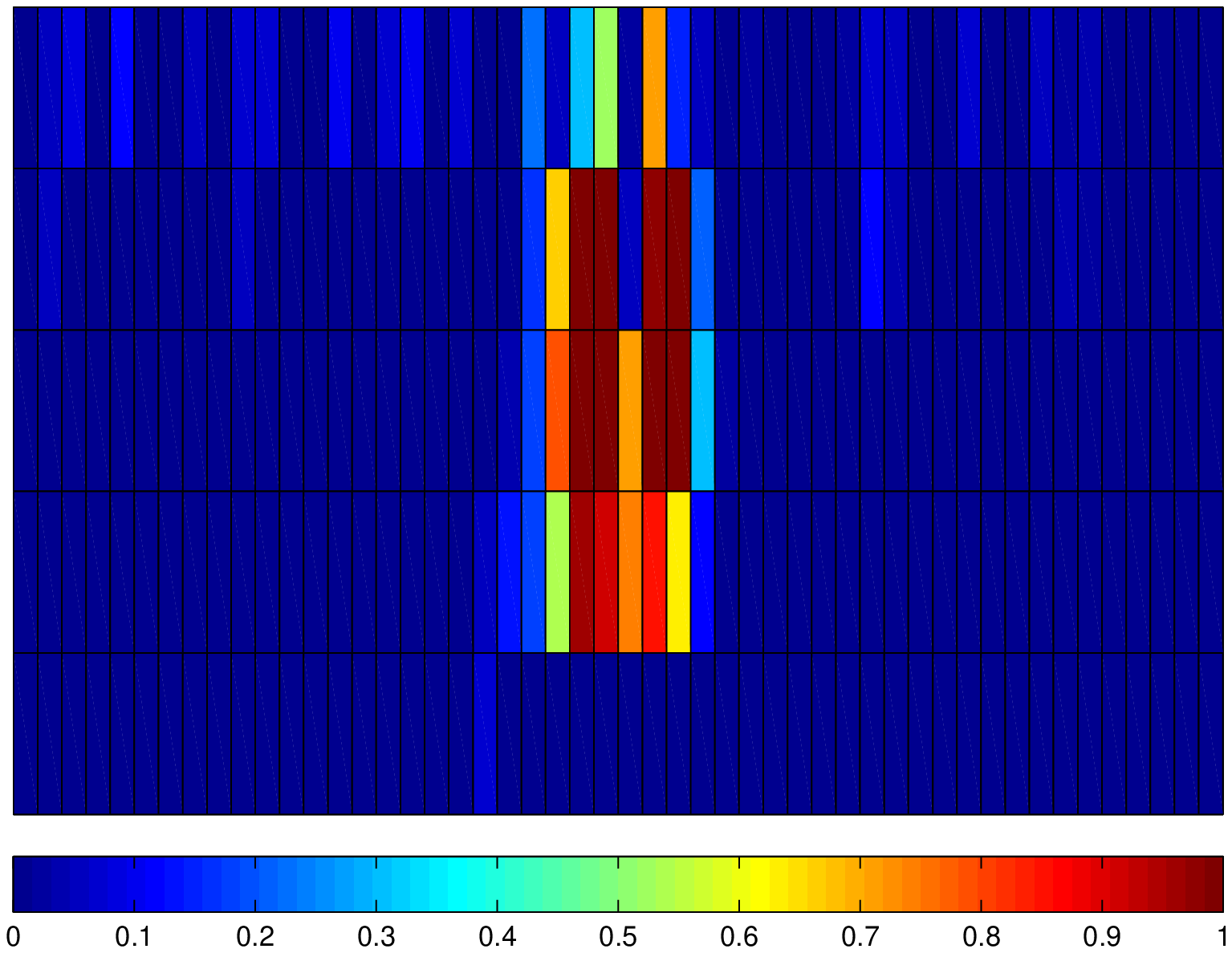}}

\caption{\label{fig2} Density model obtained from inverting the noise-contaminated data. The regularization parameter was found using the UPRE  in \ref{UPa}-\ref{UPb}, the GCV in  \ref{GCVa}-\ref{GCVb}, the $\chi^2$ in \ref{Chia}-\ref{Chib}, the MDP  in \ref{MDPa}-\ref{MDPb}, and the L-Curve in \ref{LCa}-\ref{LCb}. In each case the initial value $\bfm_0^{(0)}$ is illustrated in \ref{IGA}-\ref{IGB}, respectively. The data are two cases with noise level, $\eta_1=.03$ and $\eta_2=.005$, with on the left a typical result, sample $37$ and  and on the right  one of the few cases of $50$ with sometimes larger error, sample $22$. One can see that results are overall either consistently good or consistently poor, except that the $\chi^2$ and UPRE results are not bad in either case.}
\end{center}
\end{figure} 
  
With respect to the relative error one can see that  the error increases with the noise level, except that the L-curve appears to solve the second noise level situation with more accuracy. In most regards the UPRE, GCV and $\chi^2$ methods behavior similarly, with relative stability of the error (smaller standard deviation in the error), and increasing error with noise level. On the other hand, the final value of the regularization parameter is not a good indicator of whether a solution is over or under smoothed, contrast e.g. the $\chi^2$ and GCV methods. The $\chi^2$ method is overall cheaper, fewer iterations are required and the cost per iteration is cheap, not relying on an exploration with respect to $\alpha$, interpolation or function minimization. The illustrated results in Figure~\ref{fig2}, for the second noise level, $\eta_1=.03$ and $\eta_2=.005$, for a typical result, sample $37$ and   one of the few cases from $50$ with larger error, sample $22$, demonstrate that all methods achieve some degree of acceptable solution with respect to moving from an initial estimate which is inadequate to a more refined solution.  In all cases the geometry and density of the reconstructed models are close to those of the original model. 

To demonstrate that the choice of the initial $\bfm_0$ is useful for all methods, and not only the $\chi^2$ method we show the same results as in Figure~\ref{fig2} but initialized with $\bfm_0=0$. In most cases the solutions that are obtained are less stable, indicating that the initial estimate is useful in constraining the results to reasonable values, however most noticeably not for the $\chi^2$ method, but for the MDP and L-curve algorithms. We also illustrate the results obtained after just one iteration in Figure~\ref{fig4} with the initial condition $\bfm_0$  according to Figure~\ref{fig2} to demonstrate the need for the iteration to generally stabilize the results. These results confirm the relative errors shown in Table~\ref{tab1} for averages of the errors over the $50$ cases. 

  \begin{figure}[!h] 
\begin{center}
\subfigure[UPRE: error $.3174$]{\label{UPa}\includegraphics[height=.07\textheight,width=.45\textwidth]{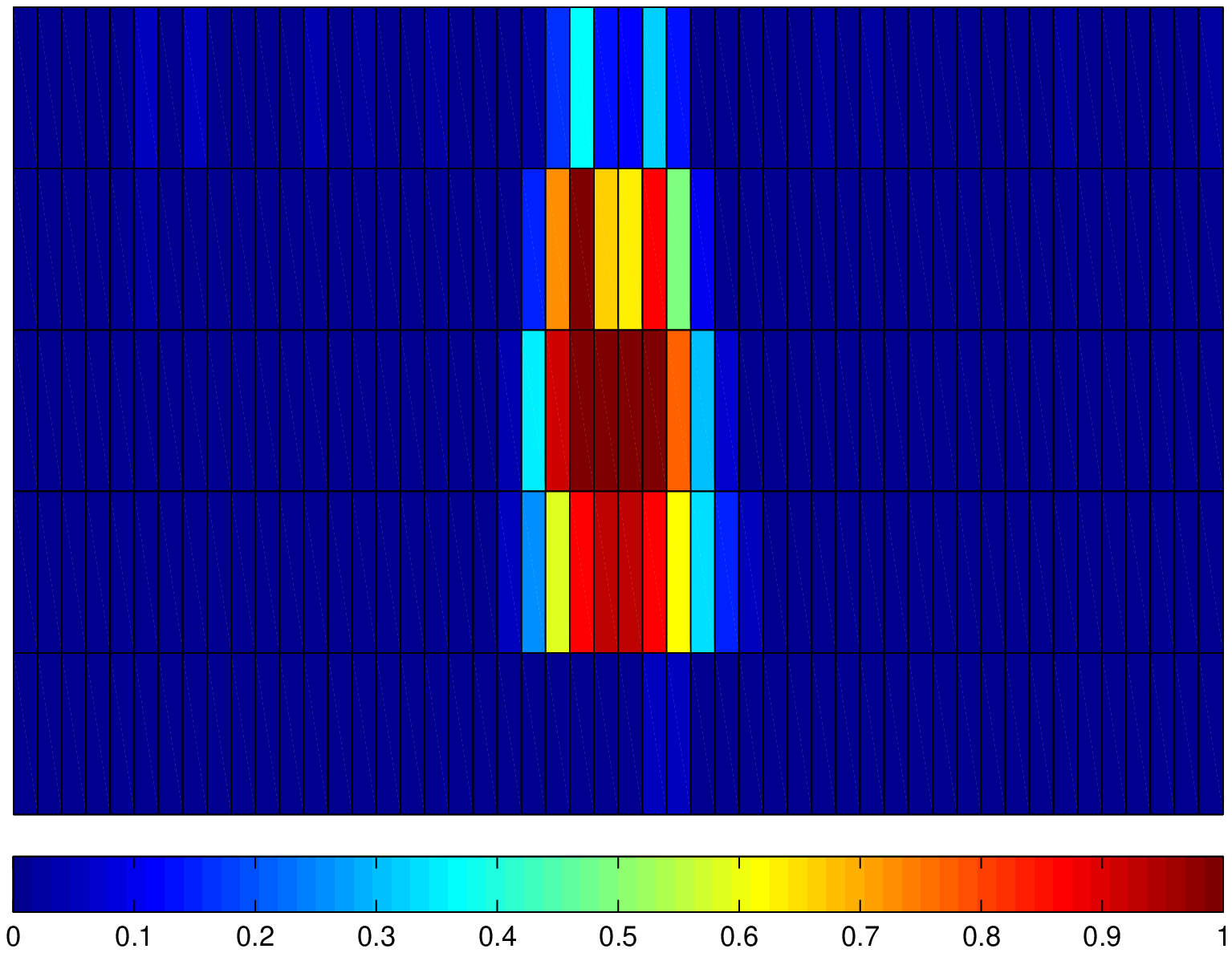}}
\subfigure[UPRE: error $.3240$]{\label{UPb}\includegraphics[height=.07\textheight,width=.45\textwidth]{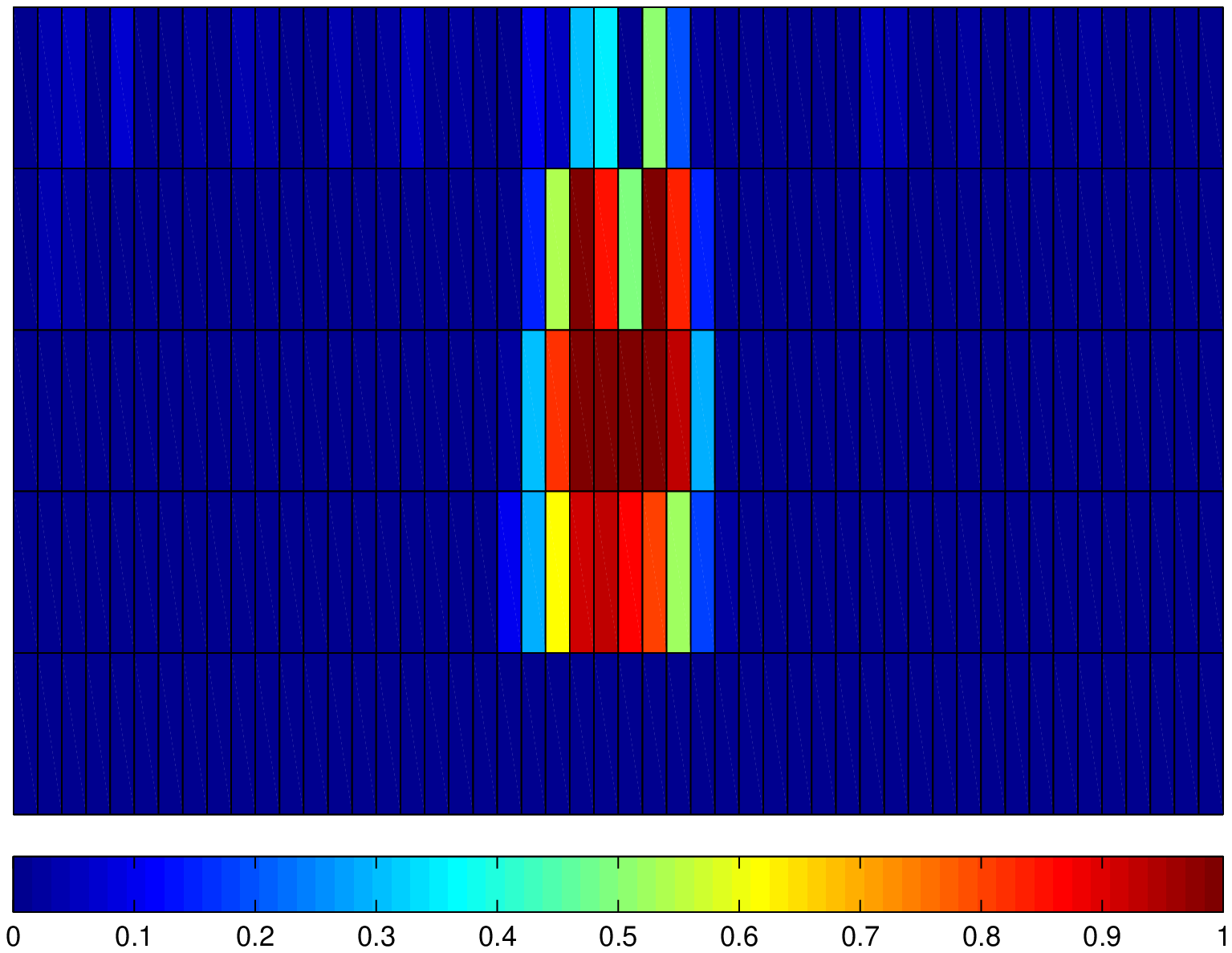}}
\subfigure[GCV: error $.3162$]{\label{GCVa}\includegraphics[height=.07\textheight,width=.45\textwidth]{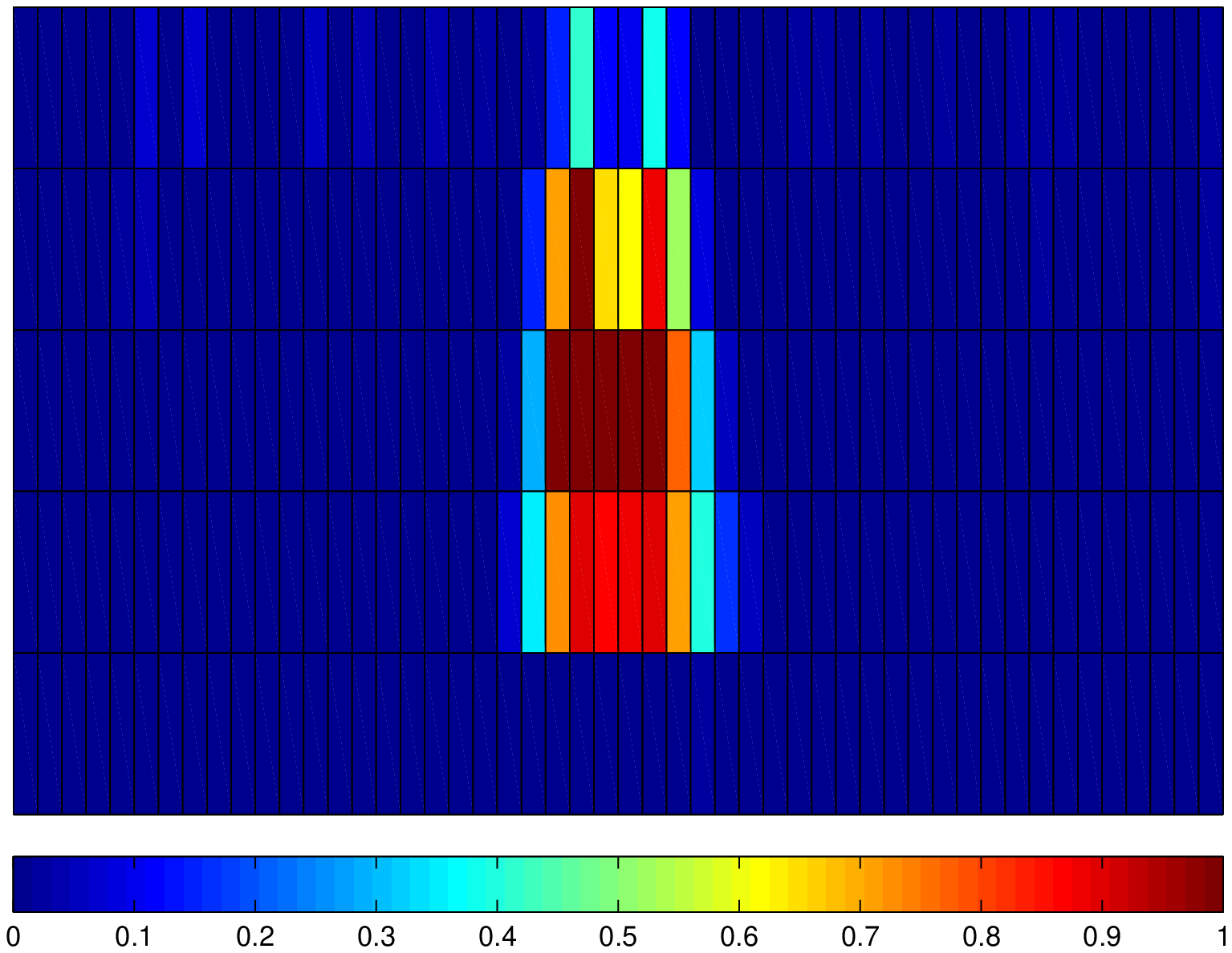}}
\subfigure[GCV: error $.3718$]{\label{GCVb}\includegraphics[height=.07\textheight,width=.45\textwidth]{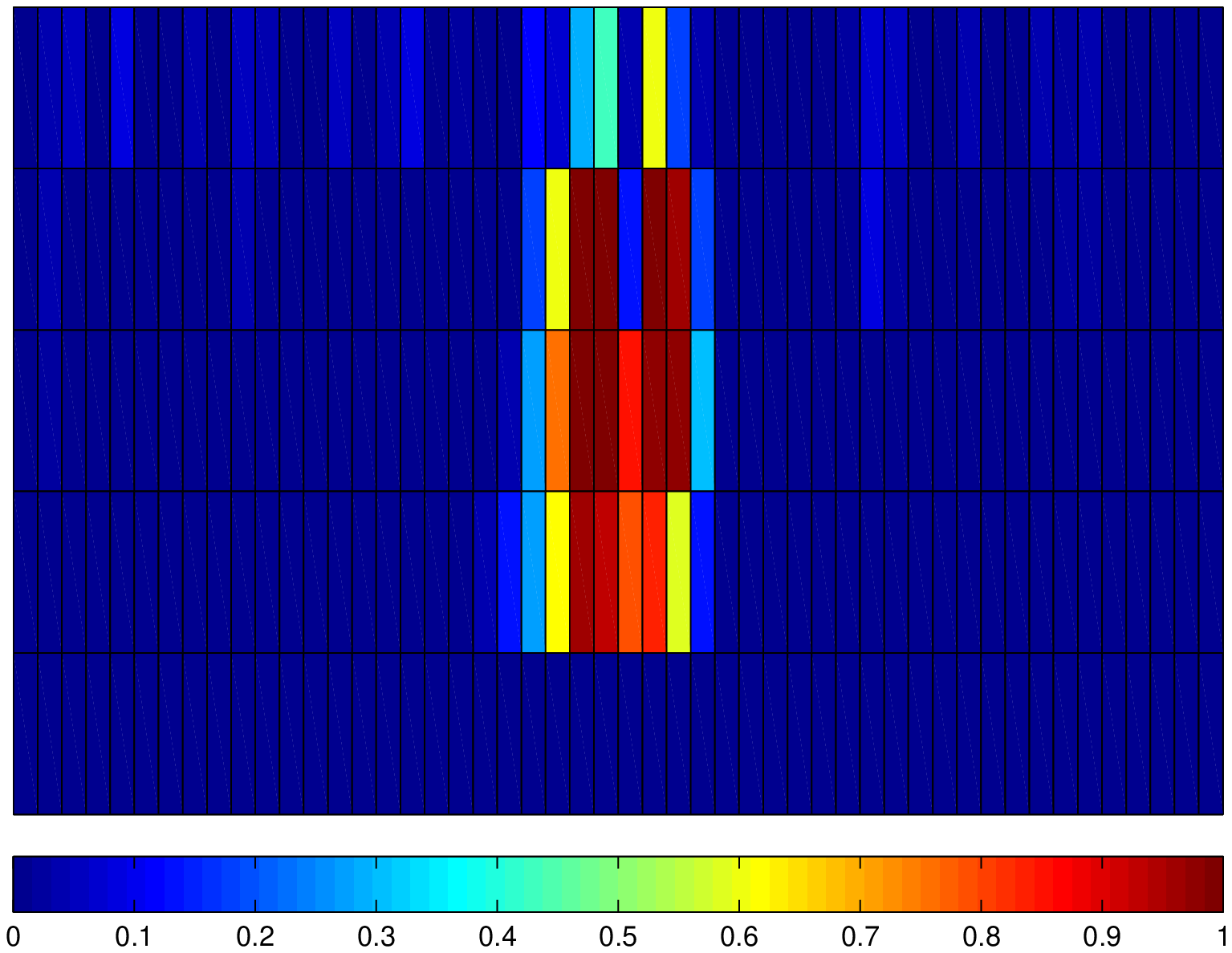}}
\subfigure[$\chi^2$: error $.3356$]{\label{Chia}\includegraphics[height=.07\textheight,width=.45\textwidth]{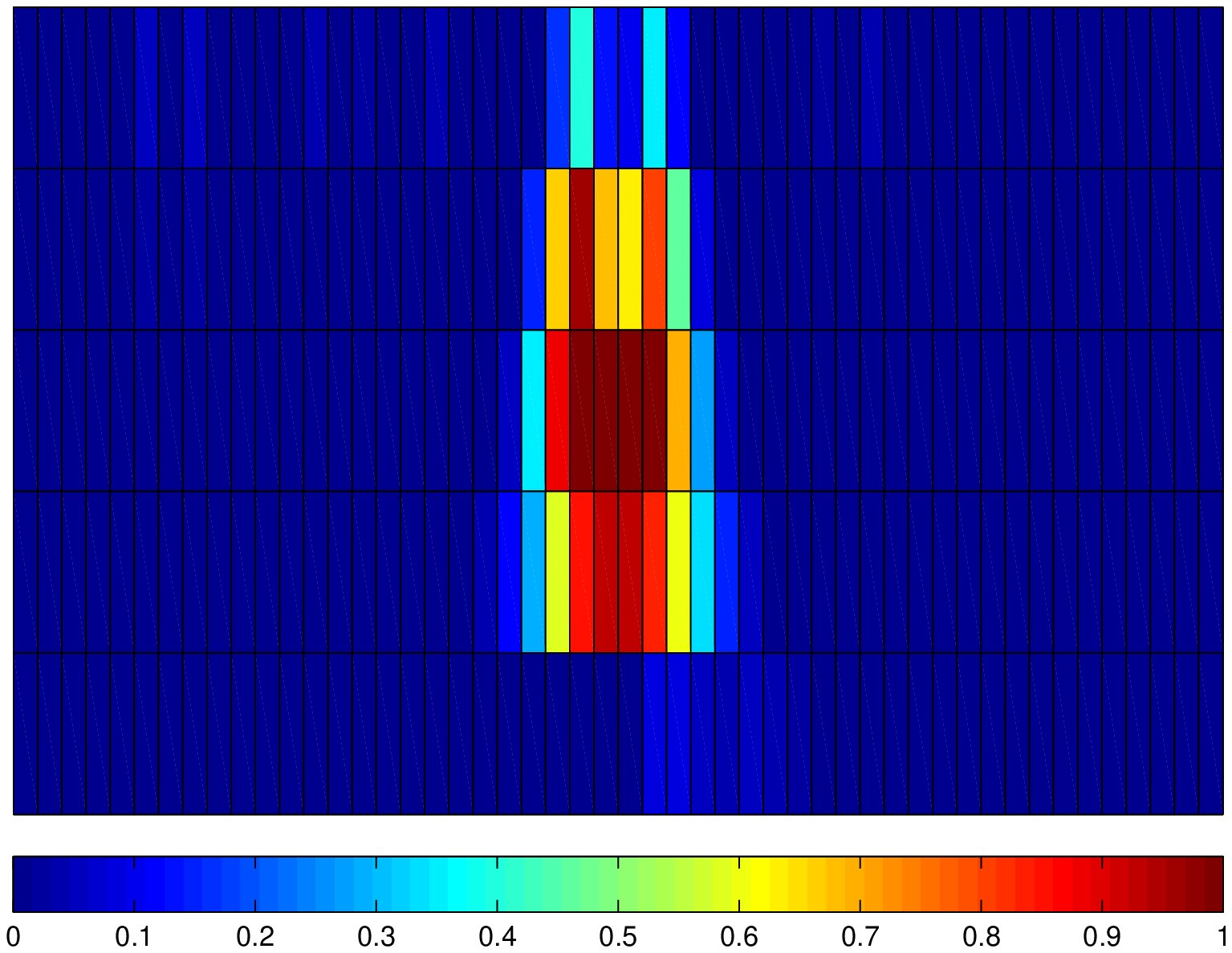}}
\subfigure[$\chi^2$: error $.3314$]{\label{Chib}\includegraphics[height=.07\textheight,width=.45\textwidth]{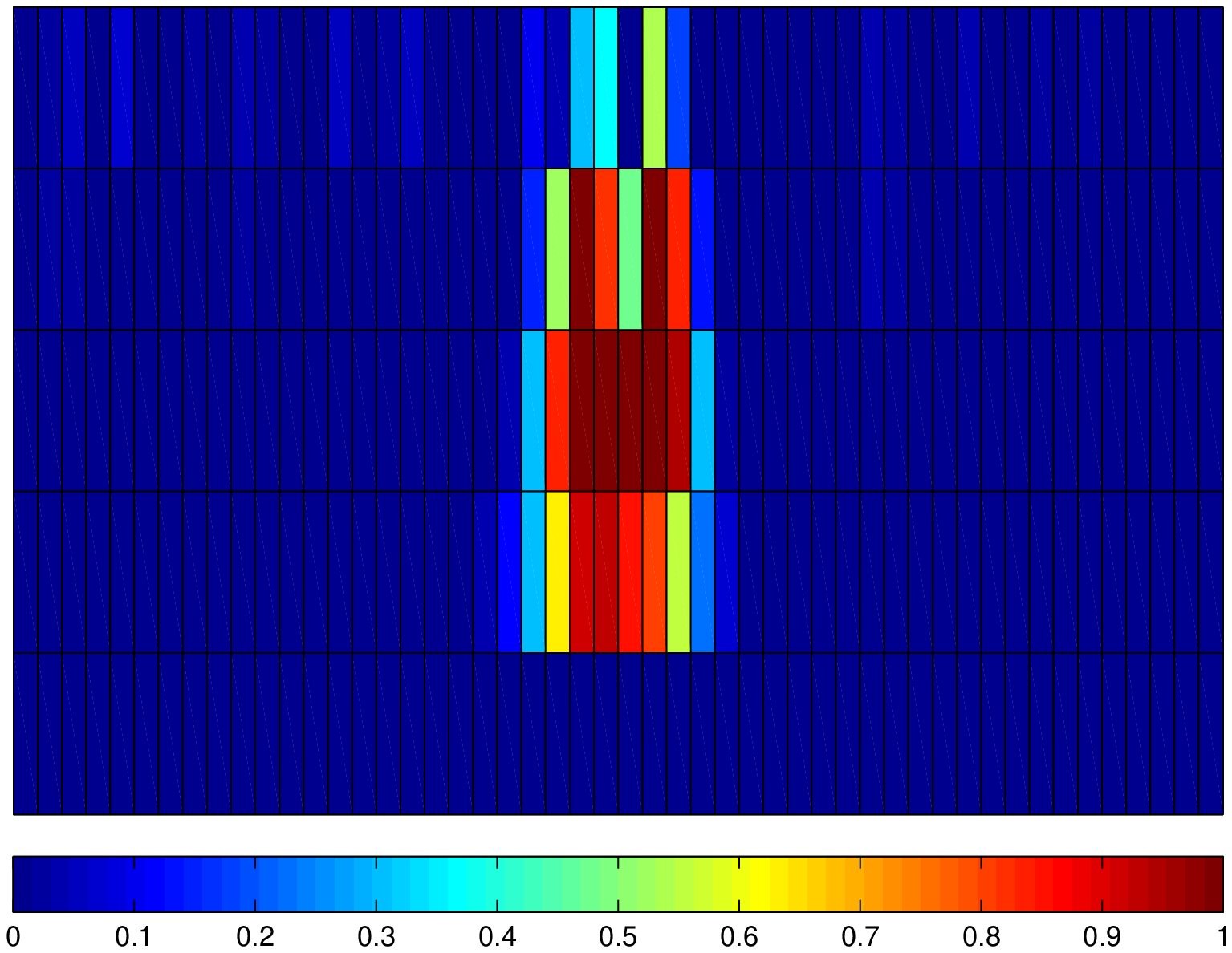}}
\subfigure[MDP: error $.4042$ ]{\label{MDPa}\includegraphics[height=.07\textheight,width=.45\textwidth]{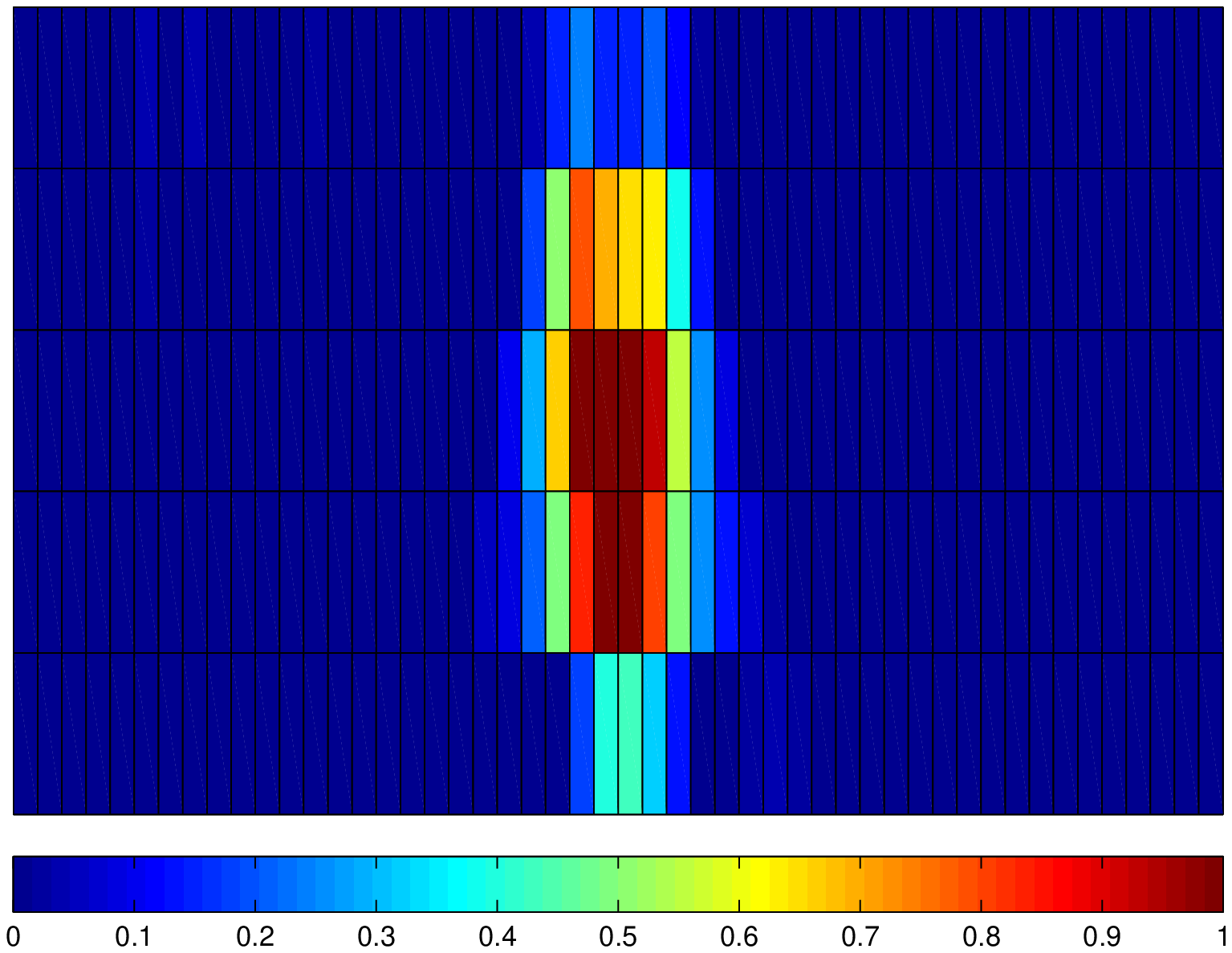}}
\subfigure[MDP: error $.3356$]{\label{MDPb}\includegraphics[height=.07\textheight,width=.45\textwidth]{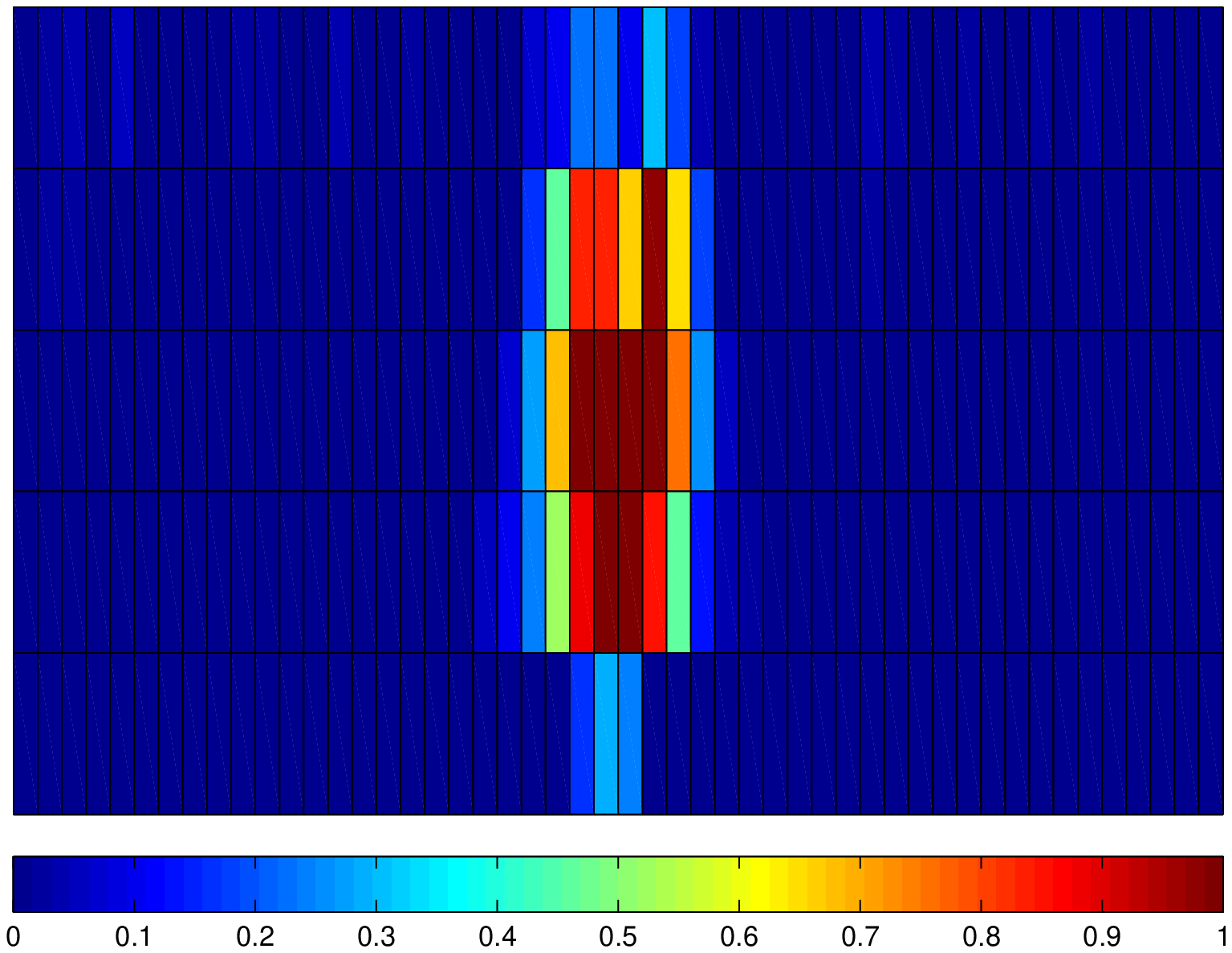}}
\subfigure[LC: error $.4420$]{\label{LCa}\includegraphics[height=.07\textheight,width=.45\textwidth]{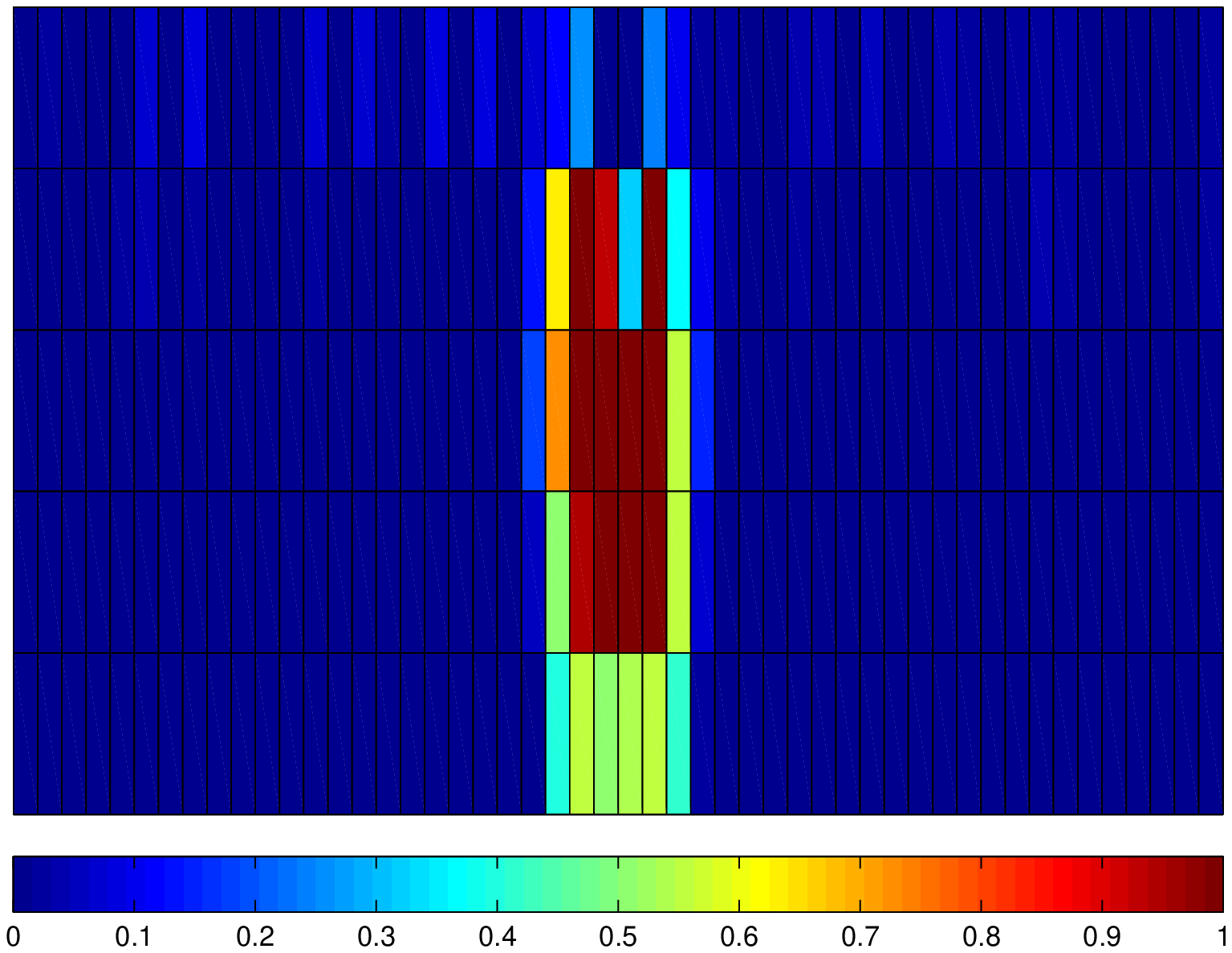}}
\subfigure[LC: error $.4555$]{\label{LCb}\includegraphics[height=.07\textheight,width=.45\textwidth]{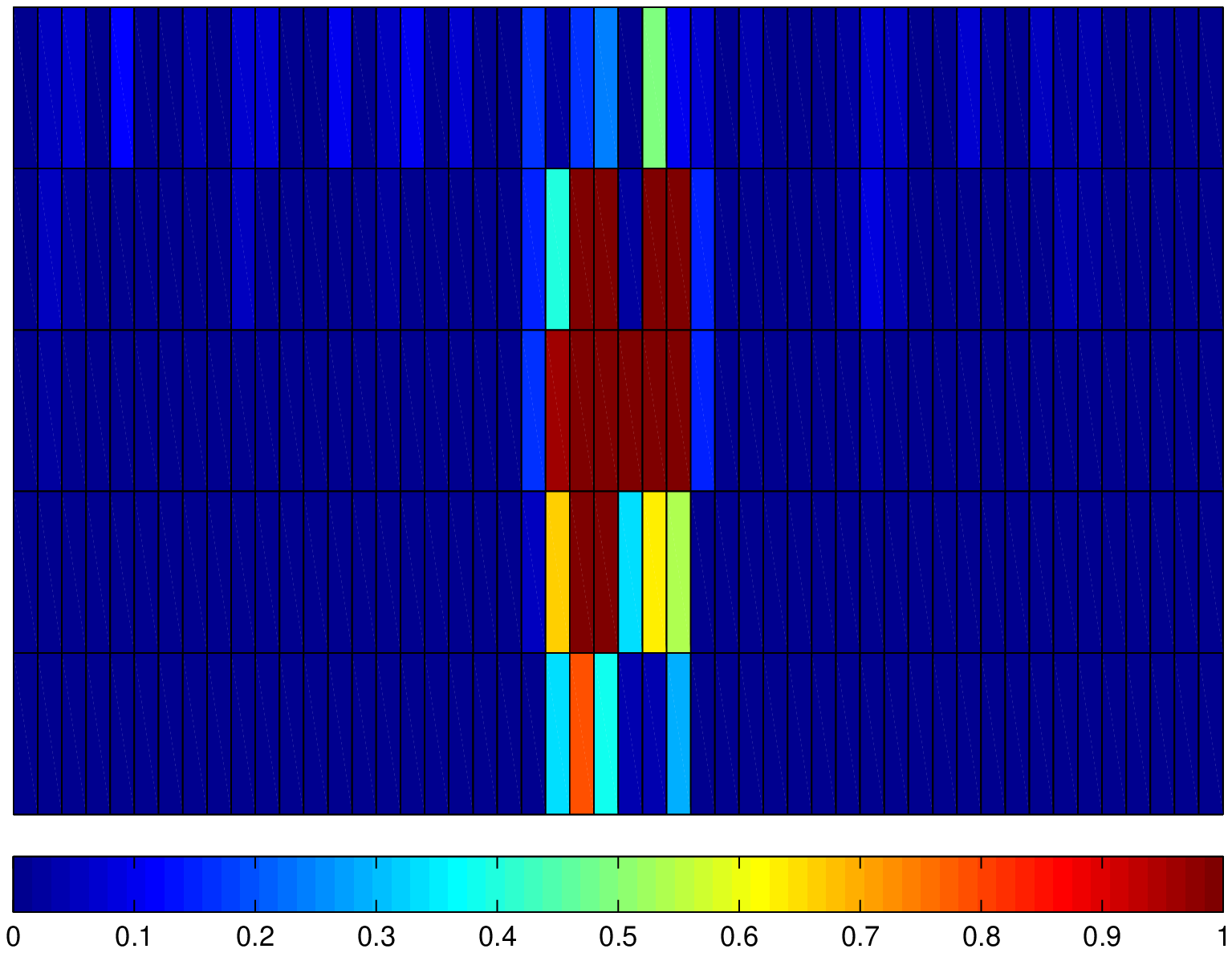}}

\caption{\label{fig3} Density model obtained from inverting the noise-contaminated data, as in Figure~\ref{fig2} except initialized with $\bfm_0=0$}
\end{center}
\end{figure} 

  \begin{figure}[!h] 
\begin{center}
\subfigure[UPRE: error $ .3330$]{\label{UPa}\includegraphics[height=.07\textheight,width=.45\textwidth]{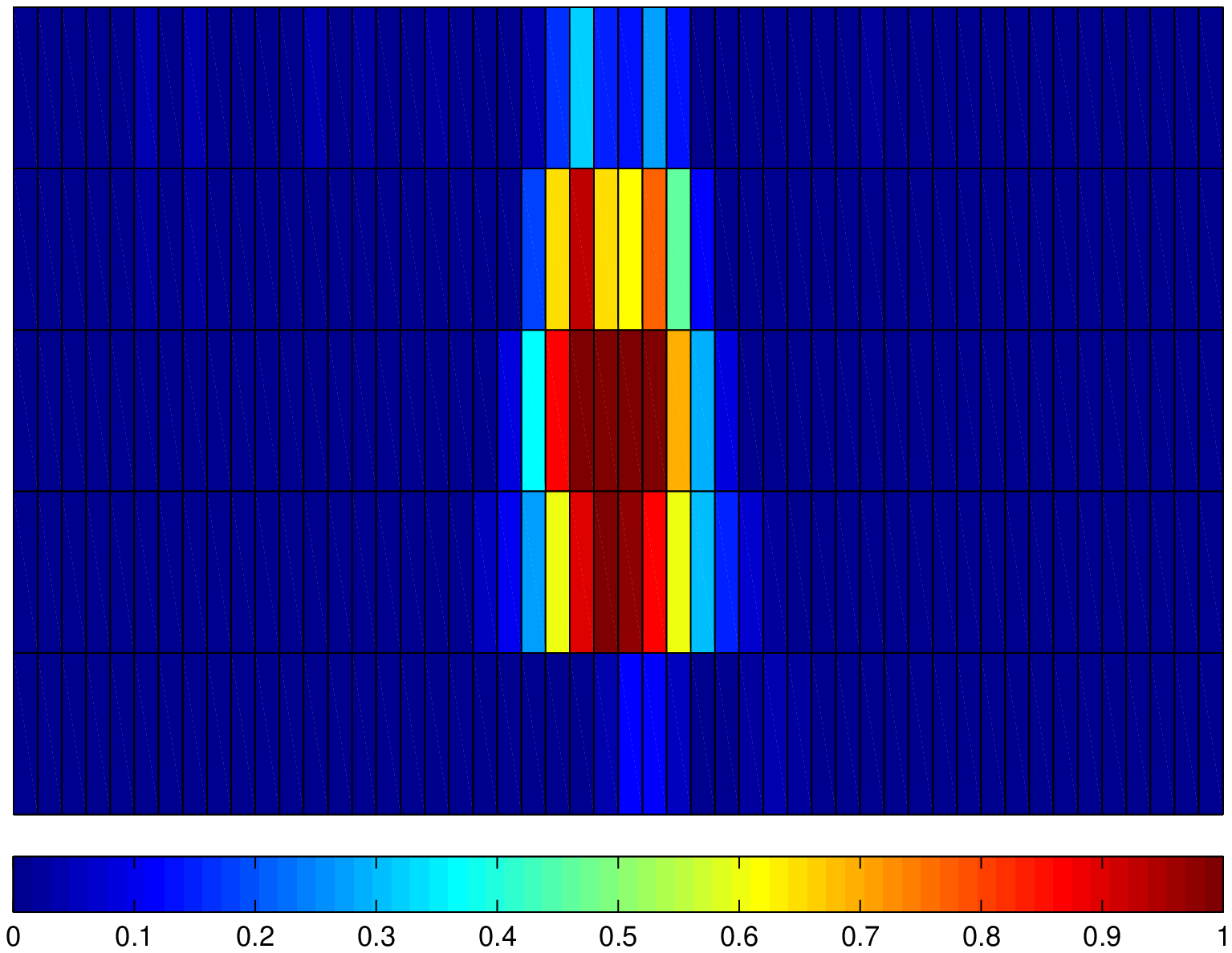}}
\subfigure[UPRE: error $ .3214$]{\label{UPb}\includegraphics[height=.07\textheight,width=.45\textwidth]{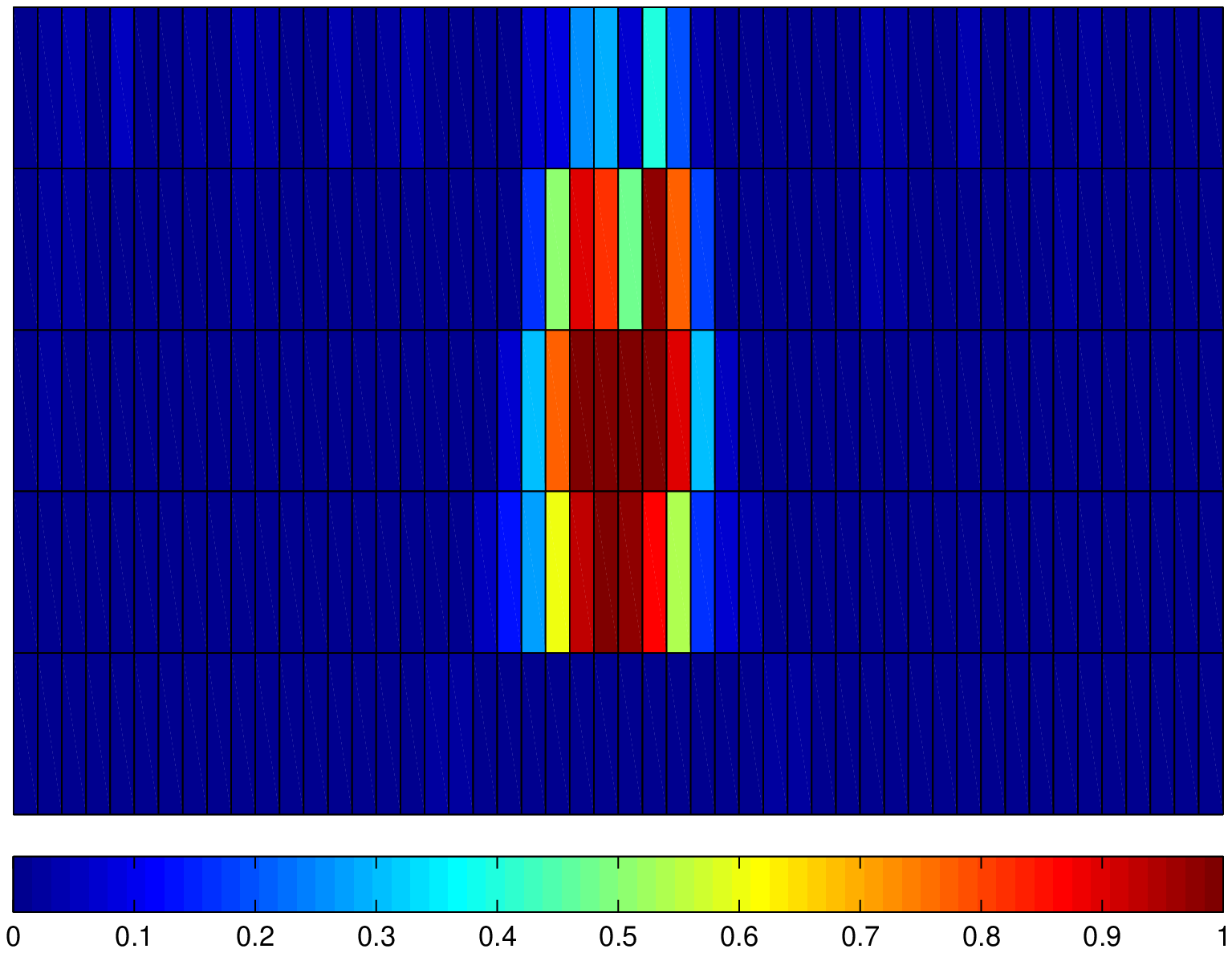}}
\subfigure[GCV: error $.3316$]{\label{GCVa}\includegraphics[height=.07\textheight,width=.45\textwidth]{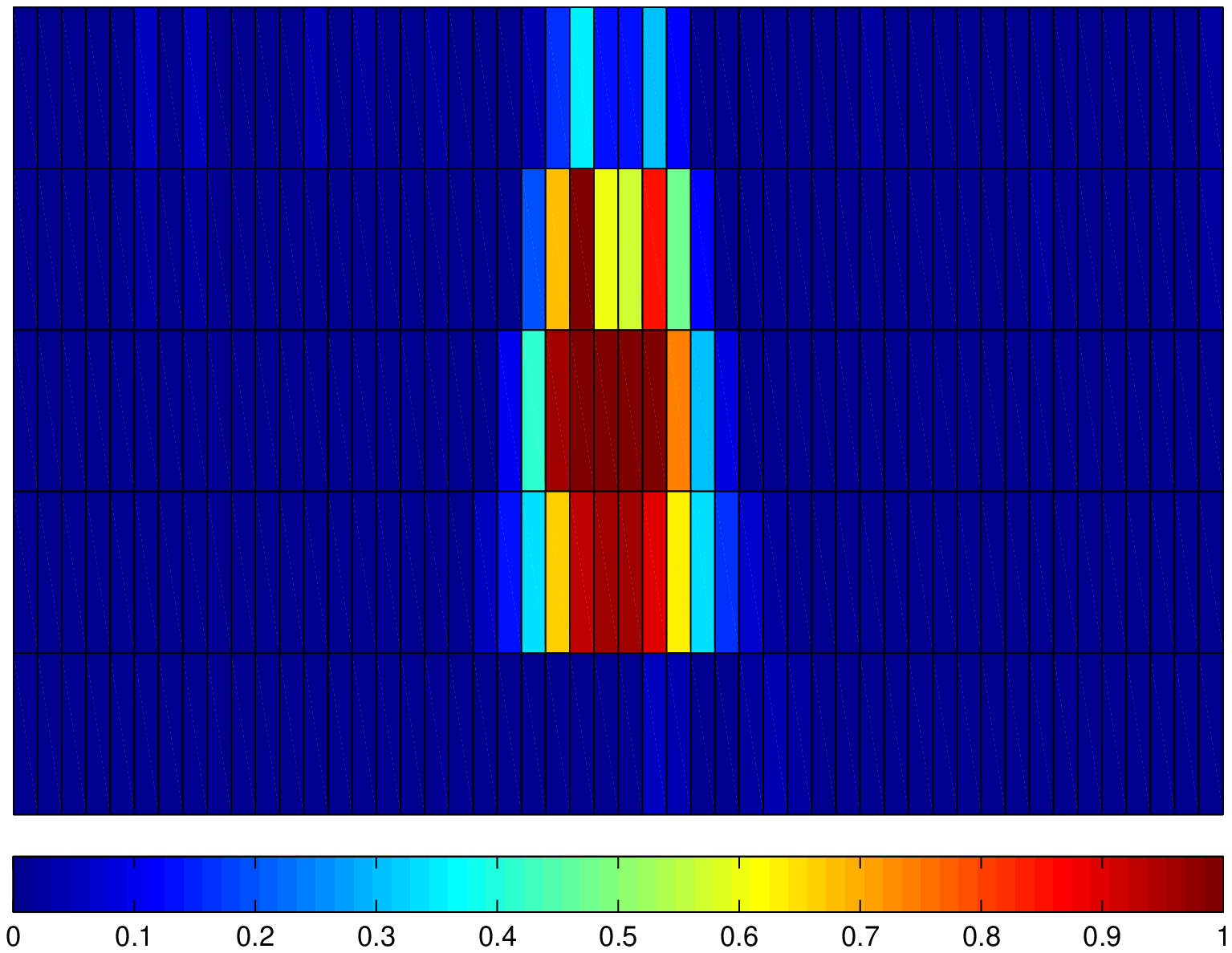}}
\subfigure[GCV: error $.3693$]{\label{GCVb}\includegraphics[height=.07\textheight,width=.45\textwidth]{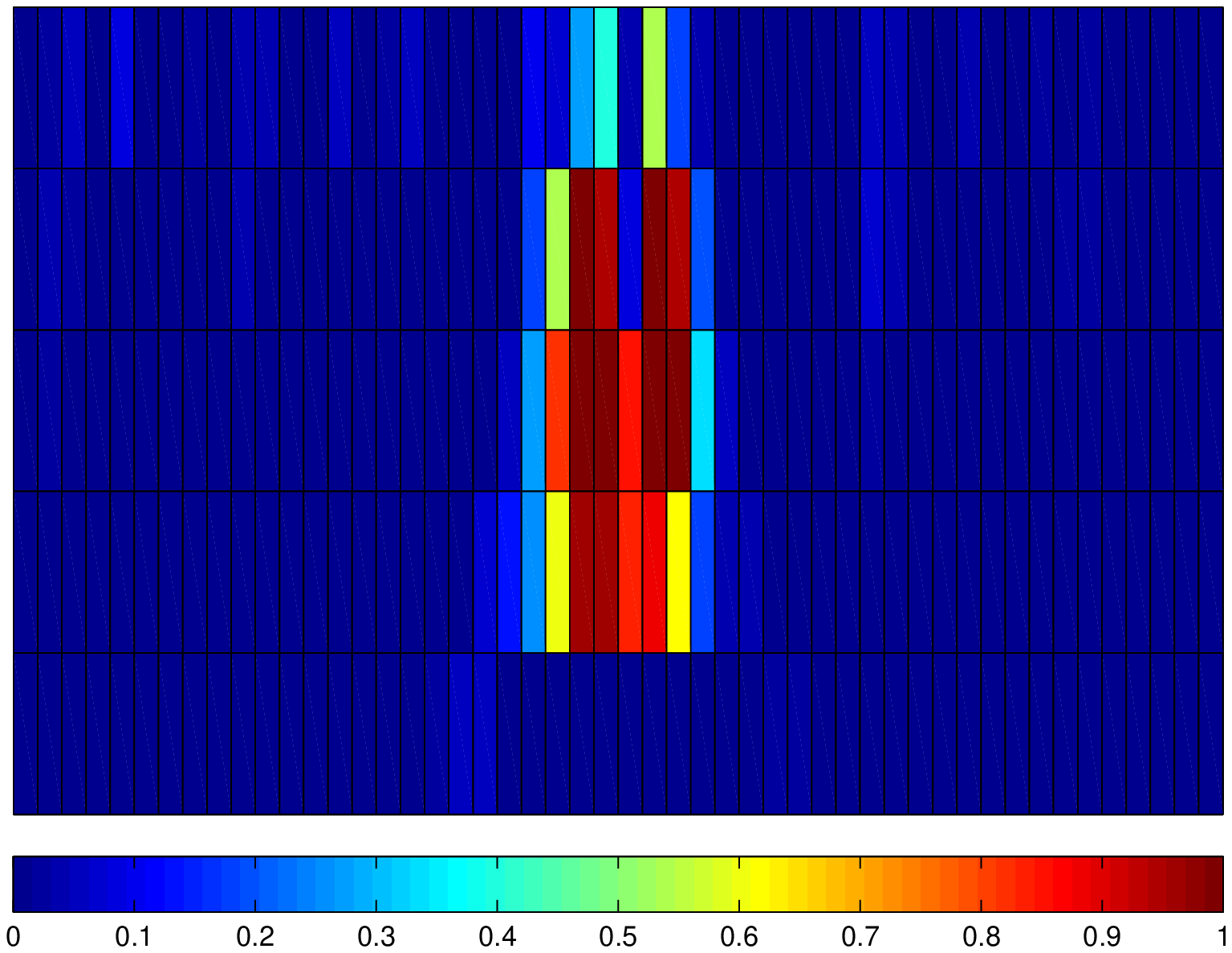}}
\subfigure[$\chi^2$: error $.3398$]{\label{Chia}\includegraphics[height=.07\textheight,width=.45\textwidth]{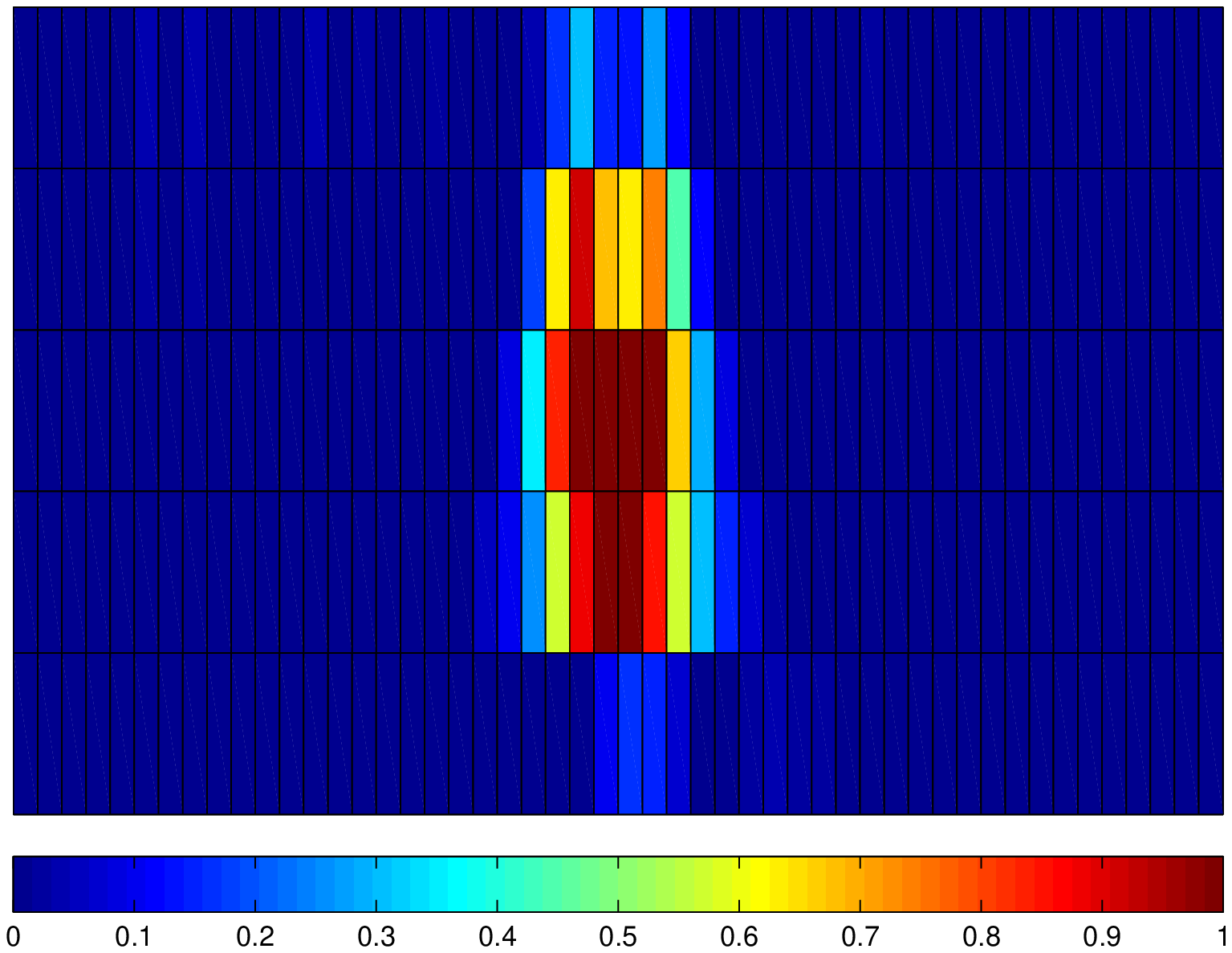}}
\subfigure[$\chi^2$: error $.3217$]{\label{Chib}\includegraphics[height=.07\textheight,width=.45\textwidth]{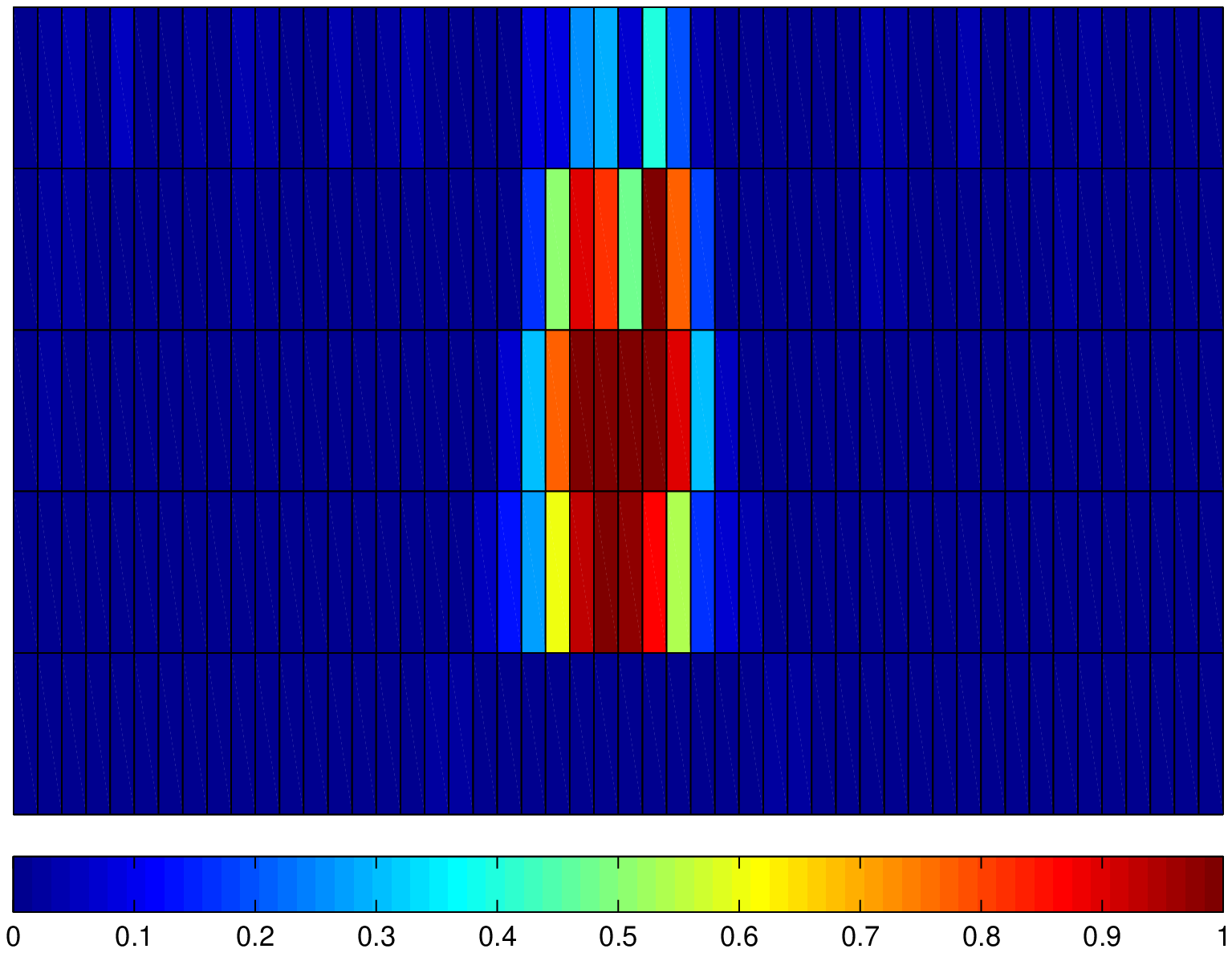}}
\subfigure[MDP: error $ .4006$ ]{\label{MDPa}\includegraphics[height=.07\textheight,width=.45\textwidth]{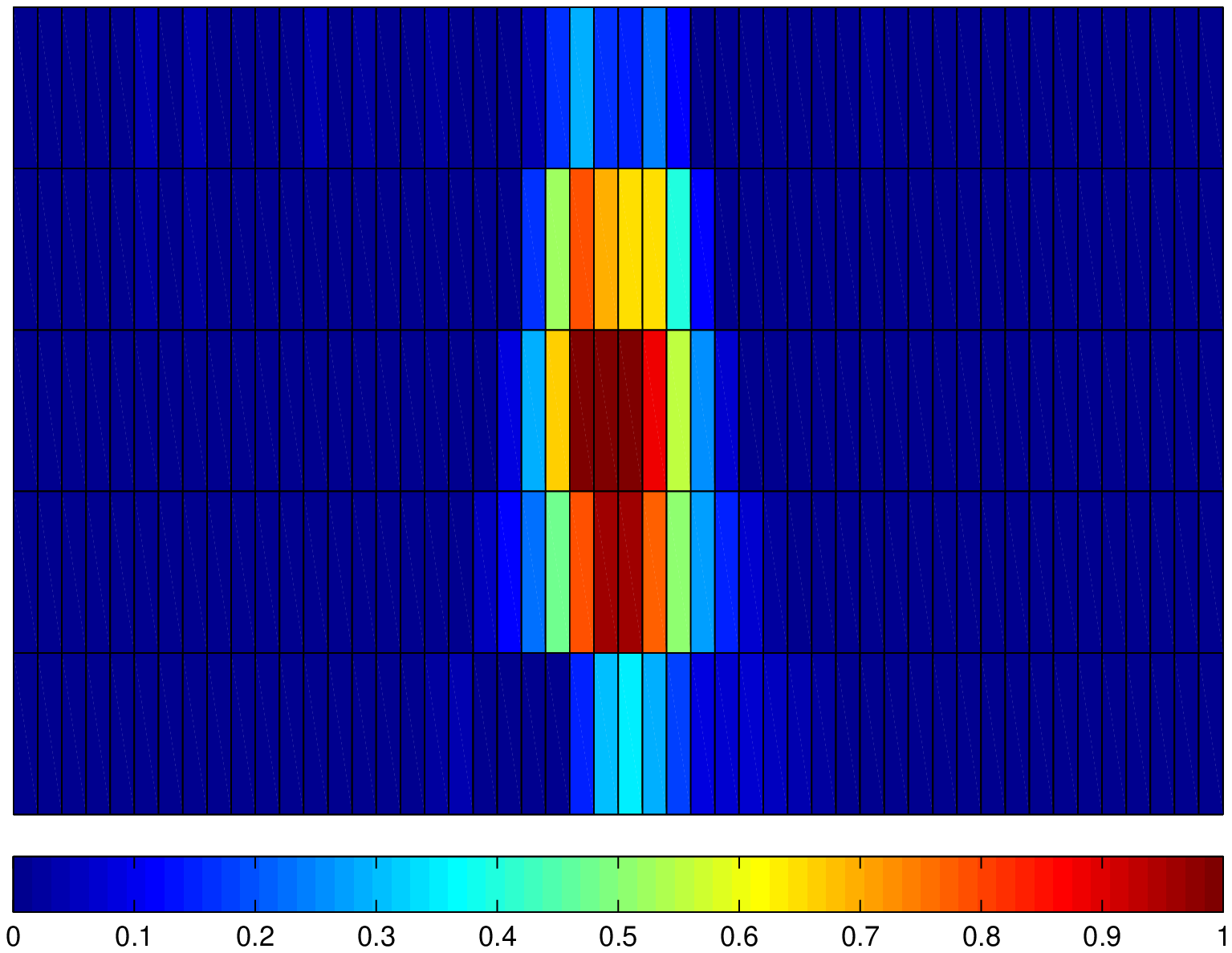}}
\subfigure[MDP: error $ .3458$]{\label{MDPb}\includegraphics[height=.07\textheight,width=.45\textwidth]{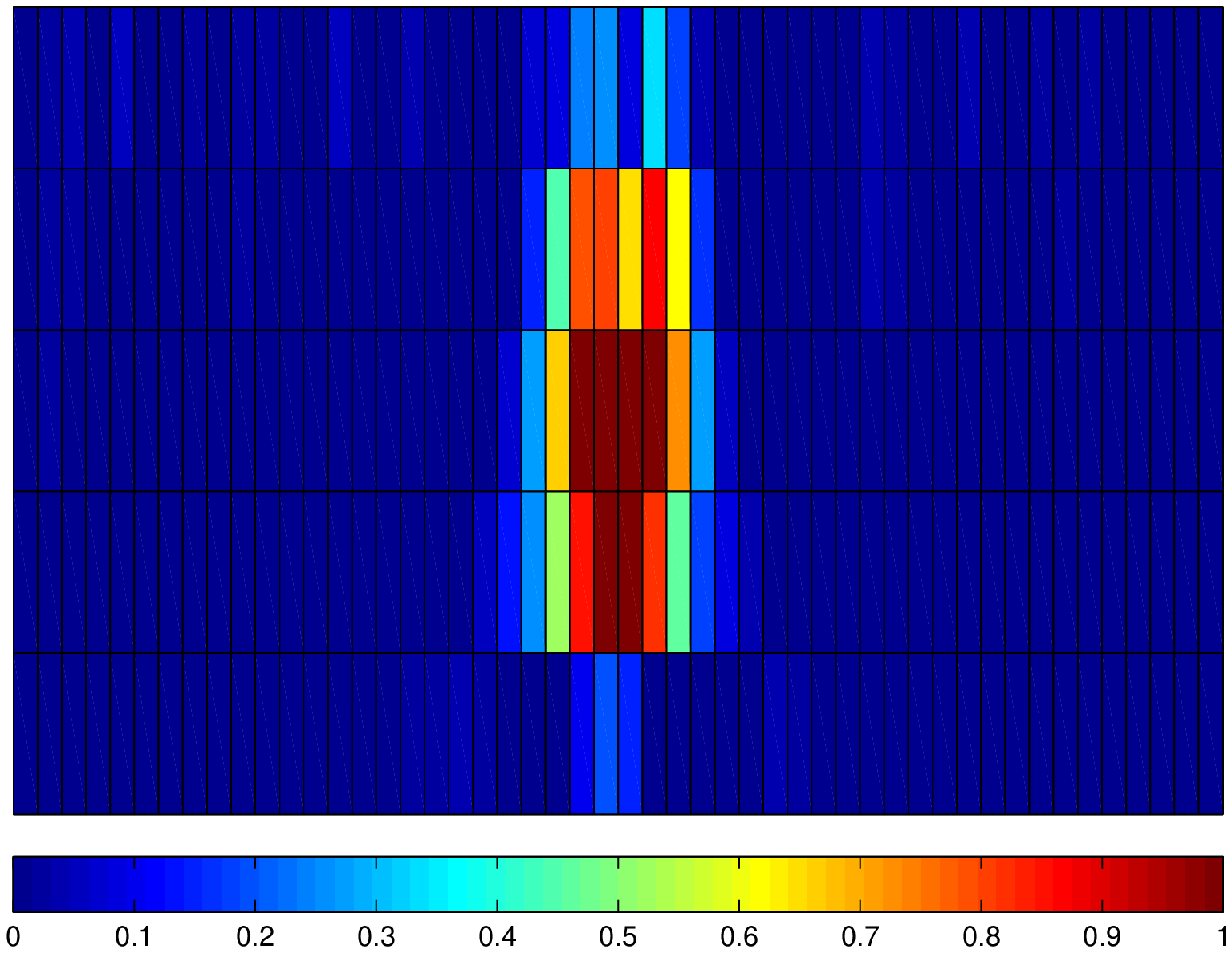}}
\subfigure[LC: error $.3299$]{\label{LCa}\includegraphics[height=.07\textheight,width=.45\textwidth]{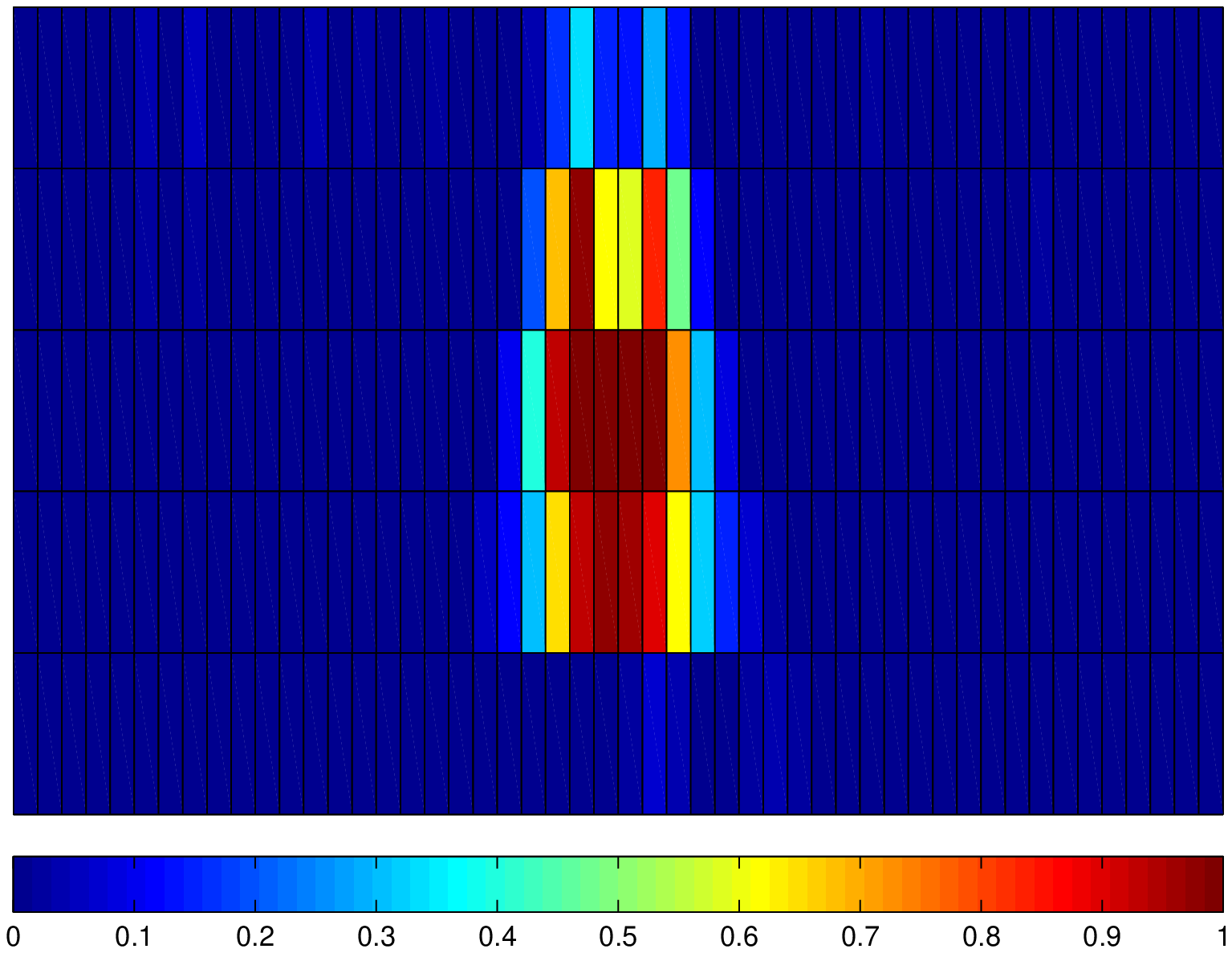}}
\subfigure[LC: error $.3970$]{\label{LCb}\includegraphics[height=.07\textheight,width=.45\textwidth]{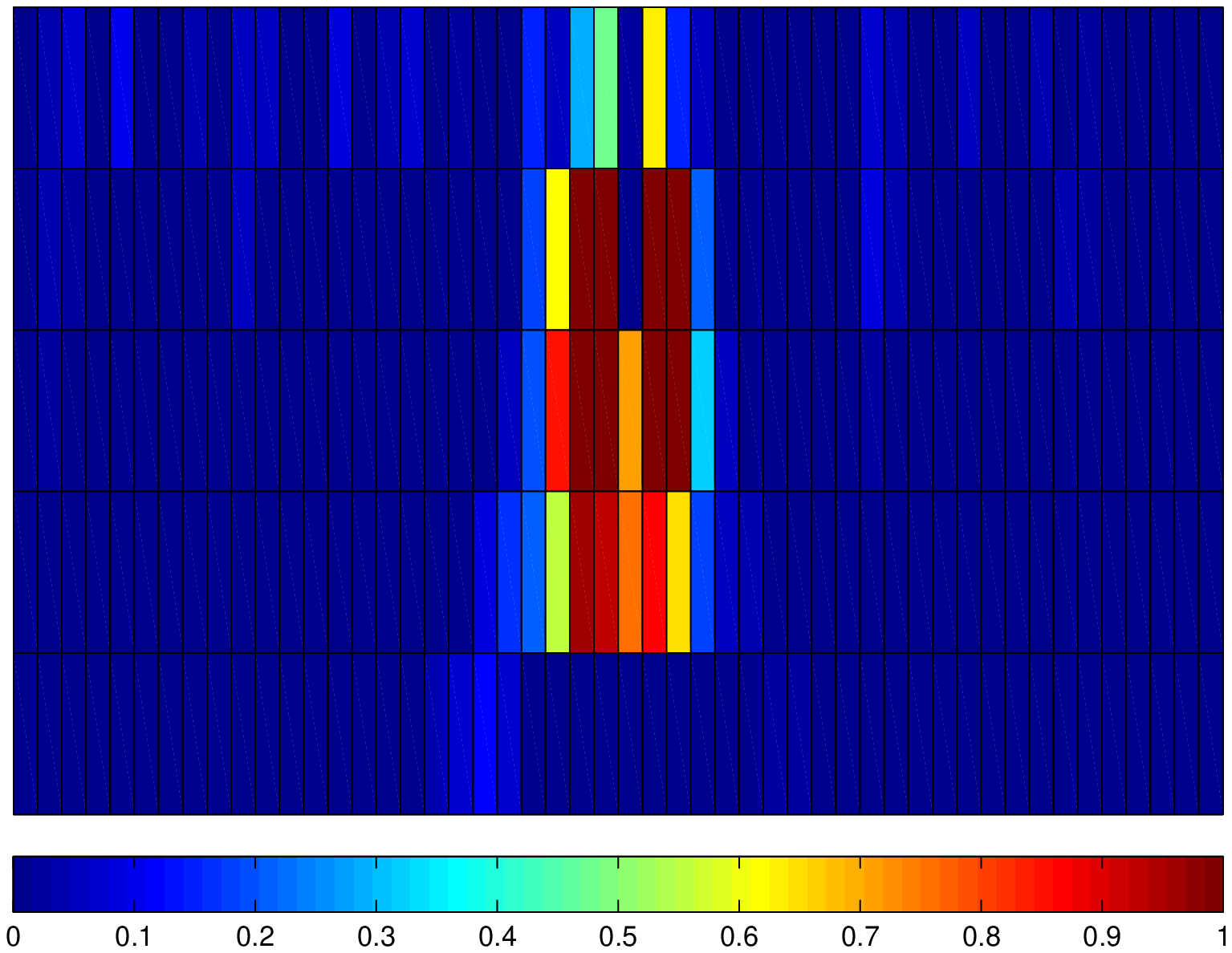}}
\caption{\label{fig4} Density model obtained from inverting the noise-contaminated data, as in Figure~\ref{fig2}  after just one step of the MS iteration. }
\end{center}
\end{figure} 

%
\section{Conclusions}\label{sec:conc}
The UPRE, GCV and $\chi^2$-principle algorithms  for estimating a regularization parameter in the context of underdetermined Tikhonov regularization have been developed and investigated,  extending the  $\chi^2$ method discussed  in \cite{mead:08,mead:13,MH13,mere:09,MR10}.  UPRE and $\chi^2$  techniques require that an estimate of the noise distribution in the data measurements is available, while ideally the $\chi^2$ also requires a prior estimate of the mean of the solution in order to apply the central version of the $\chi^2$ algorithm.  Results   demonstrate that   UPRE, GCV and $\chi^2$ techniques are useful for under sampled data sets, with UPRE and GCV yielding very consistent results. The $\chi^2$ is more useful in the context of the mapped problem where   prior information is not required. On the other hand,  we have shown that the use of the iterative MS stabilizer provides an effective alternative to the non-central algorithm  suggested in \cite{MR10} for the case without prior information. The UPRE, GCV and $\chi^2$ generally outperform L-curve and MDP methods to find the regularization parameter in the context of the iterative MS stabilizer for $2D$ gravity inversion. Moreover, with regard to efficiency the $\chi^2$ generally requires fewer iterations, and  is also cheaper to implement for each iteration because there is no  need to sweep through a large set of $\alpha$ values in order to find the optimal value.  These results are useful for the development of approaches for solving larger  $3D$ problems of gravity inversion, which will be investigated in future work. Then, the ideas have to be extended for iterative techniques replacing the SVD or GSVD for the solution.  

\ack  Rosemary Renaut acknowledges the support of AFOSR grant 025717: ``Development and Analysis of Non-Classical Numerical Approximation Methods", and 
NSF grant  DMS 1216559:   ``Novel Numerical Approximation Techniques for Non-Standard Sampling Regimes". She also notes conversations with Professor J. Mead  concerning the extension of the $\chi^2$-principle to the underdetermined situation presented here. 
\appendix
\section{Parameter Estimation Formulae}\label{App}
\setcounter{section}{1}
 We assume that the matrices and data are pre weighted by the covariance of the data, and thus use the GSVD of Lemma~\ref{gsvdlemma}  for the matrix pair $[\tilde{G};L]$. We also 
introduce inclusive notation for the limits of the summations, that are correct for all choices of $(m,n,p,r)$, where $r\le \min(m,n)$  determines filtering of the least $p-r-\tilde{q}$ singular values $\gamma_i$, $\tilde{q}=\max(n-m,0)$.
 Then  $\bfm(\sigmam) =\bfmo+\bfy(\sigmam)$ is obtained for  
 \begin{align}\label{solny}
\bfy(\sigmam) &= \sum_{i=\tilde{q}+1}^p \frac{\nu_i}{\nu_i^2 +\sigmam^{-2} \mu_i^2} s_i \bfzi +  \sum_{i=p+1}^{n} s_i \bfzi=  \sum_{i=q+1}^p f_i\frac{s_i}{\nu_i} \bfzi + \sum_{i=p+1}^n s_i \bfzi,   
\end{align}
where $Z:=(X^T)^{-1}=[\bfz_1, \dots, \bfz_n]$, ,$f_i=\left(\frac{\gamma_i^2}{\gamma_i^2 +\sigmam^{-2}}\right) $ are the filter factors and $s_i=\bfu_{i-\tilde{q}}^T\tilde{\bfr}$, $s_i=0$, $i<q$. Orthogonal matrix $V$ replaces $(X^T)^{-1}$ and $\sigma_i$ replaces $\gamma_i$, when applied for the singular value decomposition $\tilde{G}=U\Sigma V^T$ with $L=I$.

Let   $\tilde{s}_i(\sigmam)={s_i}/{(\gamma_i^2\sigmam^2+1)}$, 
 and note the filter factors with truncation are given by
\begin{align}
f_i=\left\{\begin{array}{ll}0 &\tilde{q}+1\le i\le p-r\\ \frac{\gamma_i^2}{\gamma_i^2+\sigmam^{-2}} & p-r+1\le i \le p \\ 1 & p+1\le n\end{array}\right. \quad (1-f_i)=\left\{\begin{array}{ll} 1& \tilde{q}+1\le i\le p-r\\ \frac{1}{\gamma_i^2 \sigmam^{2}+1} & p-r+1\le i \le p \\ 0 & p+1\le n\end{array}\right. . \end{align}
Then, with the assumption that if a lower limit is lower than a higher limit on a sum  the contribution is $0$, 
\begin{align}\nonumber
  \mathrm{trace}(I_m-G(\sigmam)) &=m-\sum_{i=\tilde{q}+1}^{\min(n,m)} f_i  =(m-(n-(p-r)))+\sum_{i=p-r+1}^{\min(n,m)}(1-f_i) \\&= (m+p-n-r)+\sum_{i=p-r+1}^p \frac{1}{\gamma_i^2 \sigmam^{2}+1}:=T(\sigmam) \label{trace}\\
 \| (I_m-G(\sigmam))\tilde{\bfr}\|_2^2  &=   \sum_{i=p-r+1}^p(1-f_i)^2 s_i^2+\sum_{i=n+1}^m s_i^2+\sum_{i=\tilde{q}+1}^{p-r}s_i^2\\&= 
 \sum_{i=p-r+1}^p  \tilde{s}_i^2(\sigmam)+\sum_{i=n+1}^m s_i^2+\sum_{i=\tilde{q}+1}^{p-r}s_i^2:=N(\sigmam). \label{norm}
\end{align}
Therefore we seek in each case $\sigmam$ as the root, minimum or corner of a given function.

\begin{description}
\item[UPRE:] Minimizing
$( \| \tilde{G}\bfy(\sigmam)-\tilde{\bfr}\|_2^2 +2\,\mathrm{trace}(G(\sigmam)) - m)$ we may shift by constant terms and minimize
\begin{align}U(\sigmam)&= \sum_{i=p-r+1}^p(1-f_i)^2s_i^2+2\sum_{i=p-r+1}^p(f_i-1) = \sum_{i=p-r+1}^p \tilde{s}_i^2  -2 \sum_{i=p-r+1}^p \frac{1}{\gamma_i^2\sigmam^2+1}.\label{upre}\end{align}
\item[GCV:] Minimize 
\begin{align}  GCV(\sigmam) &= \frac{\|\tilde{G}\bfy(\sigmam)-\tilde{\bfr}\|_2^2}{\mathrm{trace}(I_m-G(\sigmam))^2} 
 = 
 \frac{N(\sigmam)}{T^2(\sigmam)}\label{gcv}\end{align}
\item[$\chi^2$-principle] The iteration to find $\sigmam$ requires 
 \begin{align}
\|\bfk(\sigmam)\|_2^2&=\sum_{i=q+1}^{p} \frac{s_i^2}{\gamma_i^2\sigmam^{2}+1}, \, \frac{\partial{\|\bfk(\sigmam)\|_2^2}}{\partial \sigmam} = -2\sigmam\sum_{i=q+1}^{p} \frac{\gamma_i^2 s_i^2}{(\gamma_i^2\sigmam^{2}+1)^2}=-\frac{2}{\sigmam^3}\|L\bfy(\sigmam)\|_2^2, 
\end{align}
and  with a search parameter $\beta^{(j)}$  uses the Newton iteration
\begin{align}
\label{newt}
\sigma^{(j+1)} = \sigma^{(j)}\left( 1 + \beta^{(j)}\frac12 \left(\frac{\sigma^{(j)}}{\|L\bfy(\sigma^{(j)})\|_2}\right)^2(\|\bfk(\sigma^{(j)})\|_2^2-(m+p-n))\right).
\end{align}
This iteration holds for the filtered case by defining $\gamma_i=0$ for $q+1\le i \le p-r$, removing the constant terms in \eqref{kfilter} and using $r$ degrees of freedom, \cite{RHM10}.
\item[MDP]: For $0<\rho\le 1$ and $\delta=m$, solve  \begin{align}\label{MDPEq} \| (I_m-G(\sigmam))\tilde{\bfr}\|_2^2= N(\sigmam)=\rho \delta .\end{align}
\item[L-curve:] Determine the corner of the  \texttt{log-log} plot of $\|L\bfy\|_2$ against $\|\tilde{G}\bfy(\sigmam)-\tilde{\bfr}\|_2$,  namely the corner of the curve parameterized by 
$$\left( \sqrt{N(\sigmam)}
, \sigmam^2\sqrt{\sum_{i=p-r+1}^p\frac{\gamma_i^2s_i^2}{(\gamma_i^2\sigmam^2+1)^2}}\right).$$
\end{description}
\section*{References}

\end{document}